\theoremstyle{plain}
\newtheorem{thm}{Theorem}[section]
\newtheorem{lem}[thm]{Lemma}
\newtheorem{prop}[thm]{Proposition}
\newtheorem{ques}[thm]{Question}
\newtheorem{cor}[thm]{Corollary}
\theoremstyle{definition}
\theoremstyle{remark}
\numberwithin{equation}{section}
\def \N {\mathbb N}
\def \Z {\mathbb Z}
\def \R {\mathbb R}
\begin{document}
\title{Topological entropy  and IE-tuples of indecomposable continua}

\author[Kato]{Hisao Kato}

\email[Kato]{hkato@math.tsukuba.ac.jp}

\address[Kato]{Institute of Mathematics, University of Tsukuba, Ibaraki 305-8571, Japan}
\keywords{Topological entropy, Cantor set, IE-tuple, chaos, freely tracing property by free chains, indecomposable continuum, inverse limit, $G$-like continuum }

\subjclass[2010]{Primary 37B45, 37B40, 54H20; Secondary 54F15}

\maketitle

\begin{abstract} 
 In [3], by use of ergodic theory method, Blanchard, Glasner, Kolyada and Maass proved that if a map $f:X \to X$ of  a compact metric space $X$ has positive topological entropy, then there is an uncountable $\delta$-scrambled subset of $X$ for some $\delta>0$ and hence the dynamics $(X,f)$ is Li-Yorke chaotic.  In  [18],  Kerr and Li developed local entropy theory and gave a new proof of this theorem. Moreover, by developing some  deep combinatorial tools,  they proved that $X$ contains a Cantor set $Z$ which yields more chaotic  behaviors (see [18, Theorem 3.18]).
In the paper [6], we proved that if $G$ is any graph and a homeomorphism $f$ on a $G$-like continuum $X$ has positive topological entropy, then $X$ has an indecomposable subcontinuum. Moreover, if $G$ is a tree,  there is a pair of two distinct points $x$ and $y$ of $X$ such that the pair $(x,y)$ is an IE-pair of $f$ and 
 the irreducible continuum between $x$ and $y$ in $X$ is an indecomposable  subcontinuum. 
In this paper, we define a new notion of "freely tracing property by free chains" on $G$-like continua and  we prove that a positive topological entropy homeomorphism on a $G$-like continuum admits 
a Cantor set $Z$ such that every tuple of finite points in  $Z$ is an $IE$-tuple of $f$ and $Z$ has the freely tracing property by free chains. Also, by use of this notion,  we 
prove the following theorem: If $G$ is any graph and a homeomorphism $f$ on a $G$-like continuum $X$ has positive topological entropy, then there is a Cantor set $Z$ which is related to both the chaotic  behaviors of  Kerr and Li [18] in dynamical systems  and composants of indecomposable continua in topology. Our main result is Theorem 3.3 whose proof is also a new proof of [6]. Also, we study dynamical properties of continuum-wise expansive homeomorphisms. In this case, we obtain more precise  results concerning continuum-wise stable sets of chaotic continua and IE-tuples.  
\end{abstract}

 \section{Introduction} 
 During the last thirty years or so, many interesting connections between dynamical systems and continuum theory have been studied by many authors \\
(see [1,2,6,7,9-15,17,19,22-25,27,28]). We are interested in the following fact that  chaotic topological dynamics should imply existence of complicated topological structures of underlying spaces.  In many cases,  such continua (=compact connected metric spaces) are indecomposable continua which are central subjects of continuum theory in topology.  We know that many indecomposable continua often appear as chaotic attractors of dynamical systems. 
Also, in many cases, the composants of such indecomposable continua are strongly related to stable or unstable (connected) sets of the dynamics. For instance,   in the theory of dynamical systems and continuum theory, the Knaster continuum (= Smale's horse shoe), the pseudo-arc, solenoids and 
Plykin attractors (=Wada's lakes) etc., are well-known as such indecomposable continua.

In [3], by use of ergodic theory method, Blanchard, Glasner, Kolyada and Maass proved that if a map $f:X \to X$ of  a compact metric space $X$ has positive topological entropy, then there is an uncountable $\delta$-scrambled subset of $X$ for some $\delta >0$ and hence the dynamics $(X,f)$ is Li-Yorke chaotic.  In [18],  Kerr and Li developed local entropy theory and gave a new proof of this theorem. Moreover,  they proved that $X$ contains a Cantor set $Z$ which yields more chaotic  behaviors (see [18, Theorem 3.18]). In [2], Barge and Diamond  showed  that for piecewise monotone
surjections of graphs, the conditions of having positive entropy, containing a horse shoe and the inverse limit space containing an indecomposable subcontinuum are all equivalent.   In [24], Mouron proved that if $X$ is an arc-like continuum which admits a homeomorphism $f$ with positive topological entropy, then $X$ contains an indecomposable subcontinuum. In [6], as an extension of the Mouron's theorem, we proved that if $G$ is any graph and a homeomorphism $f$ on a $G$-like continuum $X$ has positive topological entropy, then $X$ contains an indecomposable subcontinuum. Moreover, if $G$ is a tree,  there is a pair of two distinct points $x$ and $y$ of $X$ such that the pair $(x,y)$ is an IE-pair of $f$ and 
 the irreducible continuum between $x$ and $y$ in $X$ is an indecomposable  subcontinuum. 

In this paper, for any graph $G$ we define a new notion of "freely tracing property by free chains" on $G$-like continua and by use of this notion, we prove that a positive topological entropy homeomorphism on a $G$-like continuum admits 
a Cantor set $Z$ such that every tuple of finite points in  $Z$ is an $IE$-tuple of $f$ and $Z$ has the freely tracing property by free chains. Also, we prove  that the Cantor set $Z$ is related to both  the chaotic  behaviors of  Kerr and Li [18] in dynamical systems  and composants of indecomposable continua in topology. Our main result is Theorem 3.3 whose proof is also a new  proof of [6]. Also, we study dynamical properties of continuum-wise expansive homeomorphisms. In this case, we obtain more precise  results concerning continuum-wise stable sets of chaotic continua  and IE-tuples.

 \section{Definitions and notations} 
\quad\ 
In this paper, we assume that all spaces are separable metric spaces and all maps are continuous. Let $\N$ be the set of natural numbers and $\Z$ the set of integers.  

Let $X$ be a compact metric space and ${\mathcal U}, {\mathcal V}$ be two covers of $X$. Put
\[ {\mathcal U} \vee {\mathcal V} = \{ U \cap V|~U \in {\mathcal U}, V \in {\mathcal V}\}.
\]
The quantity $N({\mathcal U})$  denotes minimal cardinality of subcovers of  ${\mathcal U}$.
Let $f:X \rightarrow X$ be a map and let ${\mathcal U}$ be an open cover of $X$. Put
\[h(f,{\mathcal U}) = \lim_{n \rightarrow \infty} \frac{ \log   N({\mathcal U} \vee
f^{-1}( {\mathcal U}) \vee \ldots \vee f^{-n+1}( {\mathcal U}) )  }{n}.
\]
The \emph{topological entropy of $f$,} denoted by $h(f)$, is the supremum of $h(f, {\mathcal U})$ for all open covers ${\mathcal U}$ of $X$.
The reader may refer to [3,4,5,6,8,18,22-25,27,28] for important  facts concerning 
topological entropy. Positive topological entropy of map  is one of generally accepted definitions of chaos. 

We say that a set $I \subseteq \N$ has \emph{positive density} if
$$\liminf_{n \rightarrow \infty} \frac{|I \cap \{1,2,...,n\}|}{n} >0.$$
Let $X$ be a compact metric space and $f:X \rightarrow X$   a map. Let ${\mathcal A}$
be a collection of subsets of $X$. We say that   ${\mathcal A}$ has an  \emph{ independence set
with positive density} if there exists a set $I \subset \N$  with positive density such that for all finite sets $J \subseteq I$,
and for all $(Y_j) \in \prod _{j\in J}{\mathcal A}$, we have that
$$\bigcap_{j\in J}f^{-j}(Y_j)\neq \emptyset.$$  We observe a simple but important and and useful
fact that if $I$ is an  independence set
with positive density for ${\mathcal A}$ then for all $k \in \Z$, $k+I$ is an  independence set
with positive density for ${\mathcal A}$.
For convenience, we may assume that  $I$ satisfies the condition $(kl)$;  for all $(Y_j) \in \prod _{j\in J}{\mathcal A}$ and any $Y_0\in {\mathcal A}$
$$(kl)~~~~~Y_0\cap \bigcap_{j\in J}f^{-j}(Y_j)\neq \emptyset.$$

We now recall the definition of IE-tuple. Let $(x_1, \ldots, x_n)$ be a sequence of points in $X$.
We say that \emph{ $(x_1, \ldots, x_n)$} is an \emph{IE-tuple for $f$} if whenever $A_1, \ldots ,A_n$ are open sets
containing $x_1, \ldots, x_n$, respectively, we have that the collection ${\mathcal A} =\{A_1,
\ldots, A_n\}$ has an independence set with positive density.  In the case that $n=2$, we use
the term IE-pair. 
We use $IE_k$ to denote the set of all IE-tuples of length $k$.

Let $f: X\to X$ be a map of a compact metric space $X$ with metric $d$ and let $\delta>0$. 
A subset $S$ of $X$ is a $\delta$-{\it scrambled set} of $f$ if $|S|\geq 2$ and for any  $x, y\in S$ with $x\neq y$, then one has 
$$\liminf_{n\to \infty}d(f^n(x),f^n(y))=0 ~~\mbox{and}~~
\limsup_{n\to \infty}d(f^n(x),f^n(y))\geq \delta.$$ 
We say that $f: X\to X$ is {\it Li-Yorke chaotic} if there is an uncountable subset $S$ of $X$ such that  for any  $x, y\in S$ with $x\neq y$, then one has 
$$\liminf_{n\to \infty}d(f^n(x),f^n(y))=0 ~~\mbox{and}~~
\limsup_{n\to \infty}d(f^n(x),f^n(y))>0.$$ 
Also, $f$ has {\it sensitive dependence on initial conditions} 
if there is a positive number  $c >0$ such that 
for any $x\in X$ and any neighborhood $U$ of $x$, one can find $y\in U$ and $n\in \N$ such that $d(f^n(x),f^n(y))\geq  c$. \\

Let $X_i~(i \in \N)$ be a sequence of compact metric spaces  and let
$f_{i,i+1}:X_{i+1} \to X_i$ be a map for each $i\in \N$. The {\it inverse
limit} of the inverse sequence
$\{X_i,f_{i,i+1}\}_{i=1}^{\infty}$ is the space 
$$\varprojlim \{X_i,f_{i,i+1}\}=\{(x_i)_{i=1}^{\infty}~|~ x_i=f_{i,i+1}(x_{i+1}) ~\mbox{for each } i\in \N\} \subset \prod
_{i=1}^{\infty}X_i$$
which has the topology inherited as a subspace of the product space $\prod
_{i=1}^{\infty}X_i$.  

If $f:X \rightarrow X$ is a  map, then we use \emph{ $\varprojlim (X,f)$ to denote
the inverse limit of $X$ with $f$ as the bonding maps}, i.e., 
\[ \varprojlim (X,f) = \left \{ (x_i)_{i=1}^{\infty} \in X^{\N}|~ f(x_{i+1}) = x_i ~(i\in \N) \right \}.
\]
 Let $\sigma_f:\varprojlim (X,f)\to \varprojlim (X,f)$ be the {\it shift homeomorphism} defined by 
$$\sigma_f(x_1,x_2,x_3,....,)=(x_2,x_3,....,).$$

 A \emph{ continuum} is a compact connected metric space. We say that a continuum is \emph{nondegenerate}
if it has more than one point. A continuum is \emph{indecomposable} (see [19,20,23,26])
if it is nondegenerate  and it is not the union of two proper subcontinua.  
For any continuum $H$, the set $c(p)$ of all points of the  continuum $H$, which can be joined with the point $p$ by a proper subcontinuum of $H$, is said to be the {\it composant} of the point $p\in H$ 
(see [20, p.208]). Note that for an indecomposable continuum $H$, the following are equivalent;
\begin{enumerate}
\item 
the two points $p, q$ belong to same composant of $H$; 
\item
$c(p)\cap c(q)\neq  \emptyset$; 
\item
$c(p)=c(q)$.
\end{enumerate}
So, we know that if $H$ is an indecomposable continuum, the family $$\{c(p)|~p\in H\}$$ of all composants of $H$ is a family of uncountable mutually disjoint sets $c(p)$ which are connected and dense $F_{\sigma}$-sets in $H$ (see [20, p.212, Theorem 6]). Note that a (nondegenerate) continuum $X$ is indecomposable if and only if 
there are three distinct points of $X$ such that any subcontinuum of $X$ containing any two points of 
the three points coincides with $X$, i.e., $X$ is irreducible between any two points of the three points. 

Let $H$ be an indecomposable continuum. We say that a subset $Z$ of $H$ is  \emph{vertically embedded} to composants of $H$ if no two of points of $Z$ belong to the same composant of $H$, i.e., if $x, y$ are any   distinct points of $Z$ and $E$ is any subcontinuum of $H$ containing $x$ and $y$, then $E=H$.   

A map $g$ from $X$ onto $G$ is an \emph{$\epsilon$-map}~$(\epsilon >0)$
if for every $y\in G$, the diameter of $g^{-1}(y)$ is less than $\epsilon$. A continuum $X$ is \emph{ $G$-like} if for every $\epsilon>0$ there is an $\epsilon$-map from $X$ onto $G$.  For any finite polyhedron $G$, $X$ is $G$-like if and only if $X$ is homeomorphic to an inverse limit of an inverse sequence of $G$. Arc-like continua are those which are $G$-like for $G=[0,1]$.  
Our focus in this article is on $G$-like continua where $G$ is a graph (= connected 1-dimensional compact polyhedron). A graph $G$ is a {\it tree} if $G$ contains no simple closed curve. 
A continuum $X$ is \emph{tree-like} if for any 
$\epsilon >0$ there exist a tree  $G_{\epsilon}$ and an $\epsilon$-map from $X$ onto $G_{\epsilon}$. In this case,  $G_{\epsilon}$ depends on $\epsilon$.
 If ${\mathcal G}$ is a collection of subsets of $X$, then the \emph{ nerve} $N({\mathcal G})$ of ${\mathcal G}$  is the polyhedron whose
vertices are elements of ${\mathcal G}$ and there is 
a simplex $<g_1, g_2,...,g_k>$ with  distinct vertices $g_1, g_2,...,g_k$ if 
$$\bigcap_{i} g_i \neq \emptyset.$$  In this paper, we consider the only case that  nerves are graphs.  

 If $\{C_1, \ldots, C_n\}$ is a subcollection of ${\mathcal G}$ we call 
it a \emph{chain} if $C_i \cap C_{i+1} \neq \emptyset$ for $1\le i <n$ and
$\overline{C_i }\cap \overline{ C_j }\neq \emptyset $ implies that $|i - j| \le 1$. We say
that $\{C_1, \ldots, C_n\}$ is a \emph{free chain in ${\mathcal G}$} if it is a chain and, moreover,
for all $1 < i <n$ we have that $C \in {\mathcal G}$ with $\overline{C} \cap \overline{C_i} \neq \emptyset$ implies
that $C=C_{i}$, $C=C_{i-1}$ or $C=C_{i+1}$. By the \emph{mesh} of a finite collection ${\mathcal G}$ of sets, we means the largest 
 of diameters of elements of ${\mathcal G}$. 
Note that for a graph $G$, a continuum $X$ is a $G$-like if and only if 
for any $\epsilon >0$, there is a finite open cover ${\mathcal G}$ of $X$ such that $N({\mathcal G})=G$ (which means that $N({\mathcal G})$ and $G$ are homeomorphic) and the mesh of ${\mathcal G}$ is less than $\epsilon$. The Knaster continuum (= Smale's horse shoe) and the pseudo-arc are arc-like continua, solenoids are circle-like continua and Plykin attractors are $(S_1\vee S_2\vee \cdots \vee S_m)$-like continua, where 
$S_1\vee S_2\vee \cdots \vee S_m~(m\geq 3)$ denotes the one point union of $m$ circles $S_i$. Such spaces are typical indecomposable continua. 
The reader may refer to [20] and [26] for standard facts concerning 
continuum theory.

Let $X$ be a continuum and $m\in \N$. Suppose that 
$A_i~(1 \leq i \leq m)$ are $m$ (nonempty) open sets in $X$ and $x_i~(1 \leq i \leq m)$ are $m$ distinct points of $X$. We identify the order $A_1\to A_2\to \cdots \to A_m$ and the converse order $A_m\to A_{m-1}\to \cdots \to A_1.$ Then we consider the equivalence class 
$$[A_1\to A_2\to \cdots \to A_m]=
\{A_1\to A_2\to \cdots \to A_m; A_m\to A_{m-1}\to \cdots \to A_1\}.$$ 
Suppose that ${\mathcal G}$ is a finite open cover 
of $X$. 
We say that a chain $\{C_1, \cdots, C_n\}
 \subseteq {\mathcal G}$ {\it follows from the pattern} 
$[A_1\to A_2\to \cdots \to A_m]$  if there exist  $$1\leq k_1 < k_2 <\cdots <k_m \leq n~~ {\mbox or}~~
1\leq k_m < k_{m-1} <\cdots <k_1 \leq n$$ such that 
$C_{k_i}\subset A_i$ for each $i=1,2,...,m$.
In this case, more precisely  we say that the chain $[C_{k_1}\to C_{k_2}\to \cdots \to C_{k_m}]$  follows from the pattern  $[A_1\to A_2\to \cdots \to A_m].$
 Similarly, we say that a chain $\{C_1, \ldots, C_n\}
 \subseteq {\mathcal G}$ {\it follows from the pattern}  
$[x_1\to x_2\to \cdots \to x_m]$  if there exist 
$$1\leq k_1 < k_2 <\cdots <k_m \leq n ~~{\mbox or}~~ 
1\leq k_m < k_{m-1} <\cdots <k_1 \leq n$$
 such that 
$x_i\in C_{k_i}$ 
for each $i=1,2,...,m$, where   
$$[x_1\to x_2 \to  \cdots \to x_m]=\{x_1\to x_2 \to \cdots \to x_m;~x_m \to x_{m-1} \to \cdots \to x_1\}.$$
 More precisely,   we say that the chain $[C_{k_1}\to C_{k_2}\to \cdots \to C_{k_m}]$  follows from the pattern $ [x_1\to x_2 \to  \cdots \to x_m].$\\

Let $Z$ be a subset of a $G$-like continuum $X$. We say that $Z$ has {\it the freely tracing property by (resp. free) chains} if for any $\epsilon >0$, any $m\in \N$ and any order $x_1\to x_2 \to \cdots \to x_m$ of any $m$ distinct points 
$x_{i}~(i=1,2,...,m)$ of $Z$, there is an open cover ${\mathcal U}$ of $X$ such that the  
mesh of ${\mathcal U}$ is less than $\epsilon$, the nerve  $N({\mathcal U})$ of ${\mathcal U}$ is $G$ and there is a (resp. free) chain in ${\mathcal U}$ which follows from the pattern $[x_1\to x_2 \to \cdots \to x_m]$.\\

Example 1.  (1)~Let $X=[0,1]$ be the unit interval and $D$ a subset of $X$. If $|D|\geq 3$, $D$ does not have the freely tracing property by chains.\\
(2)~ Let $X=S^1$ be the unit circle and 
$D$ a subset of $X$. If $|D|\leq 3$, then $D$ has  the freely tracing property by free chains. If $|D|\geq 4$, then  $D$ does not have the freely tracing property by chains.
\\

For the case that $X$ is a tree-like, we obtain the following proposition.

\begin{prop}
Let $X$ be a tree-like continuum and let $D$ be a subset of $X$ with $|D|\geq 3$. Then the following are equivalent.\begin{enumerate}
\item
For any order $x_1\to x_2 \to x_3$ of three distinct points 
$x_{i}~(i=1,2,3)$ of $D$ and any $\epsilon >0$, there is an open cover ${\mathcal U}$ of $X$ such that the  
mesh of ${\mathcal U}$ is less than $\epsilon$, the nerve  $N({\mathcal U})$ of ${\mathcal U}$ is a tree and there is a chain in ${\mathcal U}$ which follows from the pattern $[x_1\to x_2 \to x_3]$.
\item 
$D$ has the freely tracing property by chains;  for any $\epsilon >0$, any $m\in \N$ and any order $x_1\to x_2 \to \cdots \to x_m$ of any $m$ distinct points 
$x_{i}~(i=1,2,...,m)$ of $D$, there is an open cover ${\mathcal U}$ of $X$ such that the  
mesh of ${\mathcal U}$ is less than $\epsilon$, the nerve  $N({\mathcal U})$ of ${\mathcal U}$ is a tree and there is a chain in ${\mathcal U}$ which follows from the pattern $[x_1\to x_2 \to \cdots \to x_m]$.  
\item 
The minimal continuum $H$ in $X$ containing $D$ is indecomposable and 
 no two of points of $D$ belong to the same composant of $H$, i.e., $D$ is vertically embedded to composants of $H$.
\end{enumerate}
\end{prop}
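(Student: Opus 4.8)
The plan is to prove the cycle $(2)\Rightarrow(1)\Rightarrow(3)\Rightarrow(2)$. The implication $(2)\Rightarrow(1)$ is immediate: $(1)$ is exactly the case $m=3$ of $(2)$, since the three ordered triples of a three-element subset of $D$ exhaust, up to reversal, all patterns on three distinct points.

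$(1)\Rightarrow(3)$. I would first use the classical facts that a tree-like continuum, and each of its subcontinua, is hereditarily unicoherent, so that for each finite $F\subseteq X$ there is a unique subcontinuum $\langle F\rangle$ of $X$ irreducible about $F$; write $H_{ab}=\langle\{a,b\}\rangle$. The heart of this direction is the following claim: \emph{if for arbitrarily small $\epsilon>0$ there is an open cover $\mathcal U$ of $X$ with $N(\mathcal U)$ a tree, $\mathrm{mesh}(\mathcal U)<\epsilon$, and a chain in $\mathcal U$ following the pattern $[a\to b\to c]$, then $b\in H_{ac}$.} To prove it, fix any subcontinuum $K$ of $X$ with $a,c\in K$ and suppose $b\notin K$, so $\rho:=d(b,K)>0$; pick such a cover with $\epsilon<\rho$ and a chain $C_1,\dots,C_n$ in it with $a\in C_p$, $b\in C_q$, $c\in C_r$ and $p<q<r$ (after possibly reversing). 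A chain is a simple path in $N(\mathcal U)$, and the cells of $\mathcal U$ meeting the connected set $K$ span a subtree of the tree $N(\mathcal U)$ containing $C_p$ and $C_r$; hence the whole geodesic $C_p,\dots,C_r$ lies in this subtree, so $C_q$ meets $K$ and $d(b,K)\le\operatorname{diam}C_q<\rho$, a contradiction. Thus $b$ lies in every subcontinuum of $X$ through $a$ and $c$, i.e. $b\in H_{ac}$. Now apply the claim to each of the three patterns on an arbitrary triple $\{x,y,z\}\subseteq D$: this gives $z\in H_{xy}$, $y\in H_{xz}$, $x\in H_{yz}$, whence, each being minimal over a pair yet containing all three points, $H_{xy}=H_{xz}=H_{yz}=:H'$. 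Then $H'$ is irreducible between any two of $x,y,z$, so it is indecomposable by the three-point characterisation of indecomposability recalled above, and no two of $x,y,z$ lie in one composant of $H'$ (otherwise a proper subcontinuum of $H'$ would join them). Since $|D|\ge 3$, varying the third point shows $H_{ab}$ is the same continuum $\widetilde H$ for all $a,b\in D$; as $\widetilde H\supseteq D$ while $\widetilde H=H_{ab}\subseteq H$ for the minimal continuum $H$ about $D$, we get $H=\widetilde H$, indecomposable with $D$ vertically embedded, which is $(3)$.

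$(3)\Rightarrow(2)$ is the main obstacle. Replacing $X$ by $H$ (a tree-like indecomposable continuum, irreducible between any two points of $D$), one must show that for every $\epsilon>0$, every $m$ and every order $x_1,\dots,x_m$ of points of $D$ there is a finite open cover of $X$ of mesh $<\epsilon$ with tree nerve carrying a chain that follows $[x_1\to\cdots\to x_m]$. I would write $H=\varprojlim\{G_k,g_k\}$ with $G_k$ trees and the projections $\pi_k$ being $(1/k)$-maps, and exploit indecomposability in the form: for each $k$ and $\delta$ there is a deep level $\ell$ at which $g_{k\ell}$ is so folded that it carries small subtrees of $G_\ell$ $\delta$-densely over $G_k$. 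Using this, after a controlled modification of a fine cover of a suitable $G_\ell$, one builds a cover of $H$ whose nerve is a tree in which the cells around $x_1,\dots,x_m$ appear, in the prescribed order, along one chain; then one thickens this cover slightly into $X$ and amalgamates it with a fine tree-nerve cover of $X$ off a neighbourhood of $H$, keeping the nerve a tree. The difficult part is precisely getting the $m$ marked cells onto a single chain in the correct order while controlling mesh and keeping the nerve a tree, and it is here that the geometry of composants of an indecomposable continuum — their density, and the $\sigma$-connectedness of each composant — must be used; granted the claim above, the other two implications are essentially formal.
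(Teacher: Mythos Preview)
Your cycle $(2)\Rightarrow(1)\Rightarrow(3)$ is fine; in fact your argument for $(1)\Rightarrow(3)$ via the ``middle-point lemma'' (if a chain in a tree-nerve cover follows $[a\to b\to c]$ then $b\in H_{ac}$) is cleaner than the paper's, which argues the indecomposability of the minimal $H\supseteq D$ by contradiction and then the composant statement separately. Your deduction that all $H_{ab}$ coincide and equal the minimal $H$ is correct.

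The genuine gap is $(3)\Rightarrow(2)$. Your sketch via inverse limits and ``folding'' is not a proof: you have not explained how to force the $m$ marked cells to lie on a single chain in the prescribed order, and the reduction ``replace $X$ by $H$'' is problematic because the covers in the statement are covers of $X$, not of $H$; the promised ``thicken into $X$ and amalgamate, keeping the nerve a tree'' is itself a nontrivial construction that you have not carried out. The paper avoids all of this by an elementary cover-splitting device done entirely in $X$, and by breaking the implication into two pieces: first $(3)\Rightarrow(1)$, then $(1)\Rightarrow(2)$ by induction on $m$.

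For $(3)\Rightarrow(1)$: given distinct $a,b,c\in D$ and a fine tree-nerve cover $\mathcal V$ of $X$, either $V_b$ separates $V_a$ from $V_c$ in $N(\mathcal V)$ (and a chain $[a\to b\to c]$ exists immediately), or it does not. In the latter case, write $\mathcal V=\mathcal V'\cup\mathcal V''$ with $\mathcal V'\cap\mathcal V''=\{V_b\}$, both nerves connected, and $V_a,V_c\in\mathcal V'$. In the closed set $Y=X\setminus\bigcup\mathcal V''$ the components $Y_a\ni a$ and $Y_c\ni c$ are distinct: if $Y_a=Y_c$, then by hereditary unicoherence $H\cap Y_a$ is a subcontinuum of $H$ containing $a,c$ but missing $b$, contradicting $(3)$. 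Choose a clopen set $Y_1\subseteq Y$ with $Y_a\subseteq Y_1$ and $Y_c\cap Y_1=\emptyset$, and replace $\mathcal V'$ by $(\mathcal V'|Y_1)\cup(\mathcal V'|(Y\setminus Y_1))$; in the resulting tree-nerve cover, $V_b$ now separates the cell containing $a$ from the one containing $c$, and a chain following $[a\to b\to c]$ is immediate. The induction step for $(1)\Rightarrow(2)$ is the same trick with $b$ replaced by $x_m$ and ``$V_a,V_c$'' replaced by ``$V_{x_1},\dots,V_{x_{m-1}}$ and $V_{x_{m+1}}$'': since $(1)\Leftrightarrow(3)$ is already available, one again uses the composant hypothesis to separate $x_{m+1}$ from $x_1,\dots,x_{m-1}$ in $Y$, split the cover, and extend the chain by one step. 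This is the idea you are missing; no inverse-limit machinery is needed.
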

\begin{proof}
First, we will show that
(1) implies (3).  Consider the family ${\mathcal K}$ of all subcontinua of $X$ containing $D$. Since 
 $X$ is a tree-like continuum, the intersection 
$$H=\bigcap \{K\in {\mathcal K}\}$$ 
is the unique minimal subcontinuum containing $D$. Suppose, on the contrary, that $H$ is decomposable. 
Then there are proper subcontinua $H_1, H_2$ of $H$ with $H=H_1\cup H_2.$ Since $H$ is the minimal continuum containing $D$, we can choose $x, y\in D$ such that $x\in H_1-H_2, y\in H_2-H_1$. Also let $z$ be a point of $D$ with $z\neq x, z\neq y$.  We may assume that 
$z\in H_1$. Choose $\epsilon >0$ with $\epsilon <d(y,H_1)$. By (1), we can choose  an open cover ${\mathcal U}$ of $X$ such that the  
mesh of ${\mathcal U}$ is less than $\epsilon$, the nerve  $N({\mathcal U})$ of ${\mathcal U}$ is a tree and there is a chain in ${\mathcal U}$ which follows from the pattern $[x\to y \to z]$. Since $H_1$ is connected, 
the family $$\{U\in {\mathcal U}|~U\cap H_1\neq \emptyset\}$$
contains a chain from $x$ to $z$. Then 
we have a circular chain in  ${\mathcal U}$. Since $N({\mathcal U})$ is a tree, this is a contradiction. Hence $H$ is indecomposable. 

Suppose, on the contrary, that there are two distinct points $x,y\in D$ which are contained in the same composant of $H$. Choose a proper subcontinuum $J$ of $H$ containing $x,y$. Let $z$ be any point of $D$ with 
$z\neq x$ and $z\neq y$. Let $\epsilon >0$ be a sufficiently small positive number. By (1), there is an open cover ${\mathcal U}$ of $X$ such that the  
mesh of ${\mathcal U}$ is less than $\epsilon$, the nerve  $N({\mathcal U})$ of ${\mathcal U}$ is a tree and there is a chain in ${\mathcal U}$ which follows from the pattern $[x\to z \to y]$. Since $N({\mathcal U})$ is a tree and $x, y \in J$, we see that $d(z,J)<\epsilon$. Since $\epsilon$ is arbitrary small, then we see that $z\in J$. Hence $D\subset J$. Since $H$ is the minimal continuum containing $D$, this is a contradiction.  Consequently, no two of points of $D$ belong to the same composant of $H$.

Next we will show that (3) implies (1). We assume that the minimal continuum $H$ in $X$ containing $D$ is indecomposable and 
 no two of points of $D$ belong to the same composant of $H$. Let $a, b, c$ be any three distinct points of $D$. We consider 
the order $a\to b \to c$. Let $\epsilon >0$ be sufficiently small so that $\epsilon < \min \{d(a,b), d(b,c), d(c,a)\}$. Since $X$ is tree-like, we have a finite  open cover   $ {\mathcal V}$ of $X$ such that the  
mesh of ${\mathcal V}$ is less than $\epsilon$, the nerve  $N({\mathcal V})$ of ${\mathcal V}$ is a tree. 
For any $x \in X$, let $V_x$ be an element of $ {\mathcal V}$ containing the point $x$. 

We consider the following cases.\\
Case(i):~$V_b$ separates the vertices $V_a$ from $V_c$ in the nerve $N({\mathcal V})$. \\
In this case, we can easily see that 
 there is a chain in ${\mathcal V}$ which follows from the pattern $[a\to b \to c]$.\\
Case(ii):~$V_b$ does not separate the vertices $V_a$ from $V_c$ in the nerve $N({\mathcal V})$. \\
Since $N({\mathcal V})$ is  a tree, 
we can choose subfamilies $ {\mathcal V}'$ and $ {\mathcal V}"$ of $ {\mathcal V}$ such that 
$ {\mathcal V}={\mathcal V}'\cup {\mathcal V}"$, $ {\mathcal V}'\cap {\mathcal V}"=\{V_b\}$,  the nerves   
$N({\mathcal V}')$ and $N({\mathcal V}")$ are connected (tree), and $N({\mathcal V}')$ contains $V_a$ and $V_c$. 
Put  
$$Y=X-\bigcup {\mathcal V}",$$
where 
$$\bigcup {\mathcal V}"=\bigcup \{V|V\in {\mathcal V}"\}.$$ 
Consider the component $Y_a$ containing $a$ in $Y$ and the component $Y_c$ containing $c$ in $Y$. 
Then we see that $Y_a \cap Y_c=\emptyset$. Suppose, on the contrary, that $Y_a \cap Y_c\neq \emptyset$ and hence 
$Y_a=Y_c$. Since $X$ is a tree-like continuum, we see that  $H\cap Y_a$ is a proper subcontinuum of $H$ containing $a$ and $c$.  This implies that $a$ and $c$  belong to the same composant of $H$. This is a contradiction. Hence $Y_a \cap Y_c=\emptyset$. Since $Y_a$ is a component of $Y$, we can choose a sufficiently small closed and open neighborhood $Y_1$ of $Y_a$ in $Y$ such that 
$Y_1\cap Y_c=\emptyset$. Put $Y_2=Y-Y_1$. 
Consider the following families $ {\mathcal V}_1$ and $ {\mathcal V}_2 $ of open sets of $Y$ defined by 
$$ {\mathcal V}_1= {\mathcal V}'|Y_1, ~~ {\mathcal V}_2= {\mathcal V}'|Y_2,$$  
where ${\mathcal V}'|Y_1=\{V\cap Y_1|~V\in {\mathcal V}'\}$.
Put $$ {\mathcal U}' ={\mathcal V}_1\cup {\mathcal V}_2 \cup {\mathcal V}".$$
By use of the cover $ {\mathcal U}'$ of $X$, we can easily construct the desired open cover  ${\mathcal U}$ of $X$ such that the  
mesh of ${\mathcal U}$ is less than $\epsilon$, the nerve  $N({\mathcal U})$ of ${\mathcal U}$ is a tree and there is a chain in ${\mathcal U}$ which follows from the pattern $[a\to b \to c]$.  

Note that it is trivial that (2) implies (1).
Finally, we prove that (1) implies (2). By the induction on $m$, we prove the implication.  In the statement of $(2)$,  the case $m=3$ is the case of $(1)$.  We assume that 
the statement of $(2)$ is true for the case $m\geq 3$. Let  $x_1\to x_2 \to x_3\to \cdots \to x_m\to x_{m+1}$ be any order of 
 distinct $m+1$ points $x_{i}~(i=1,2,...,m+1)$ of $D$ and $\epsilon >0$.  By induction, there is a finite  open cover   $ {\mathcal V}$ of $X$ such that the  
mesh of ${\mathcal V}$ is less than $\epsilon$, the nerve  $N({\mathcal V})$ of ${\mathcal V}$ is a tree and there exists a chain 
 in ${\mathcal V}$ which follows from the pattern $[x_1\to x_2 \to x_3\to \cdots \to x_m]$. 
 As above, we consider the following cases.\\
Case(i):~$V_{x_{m}}$ separates the vertices $V_{x_i}~(i=1,2,...,m-1)$ from $V_{x_{m+1}}$ in the nerve $N({\mathcal V})$. \\
 In this case, we can easily see that 
 there is a chain in ${\mathcal V}$ which follows from the pattern $[x_1\to x_2 \to x_3\to \cdots \to x_m\to x_{m+1}]$.\\
Case(ii):~$V_{x_{m}}$ does not separate the vertices $V_{x_i}~(i=1,2,...,m-1)$ from $V_{x_{m+1}}$ in the nerve $N({\mathcal V})$.  \\
Since $N({\mathcal V})$ is  a tree, 
we can choose subfamilies ${\mathcal V}'$ and ${\mathcal V}"$ of ${\mathcal V}$ such that ${\mathcal V}={\mathcal V}'\cup {\mathcal V}"$,
${\mathcal V}'\cap {\mathcal V}"=\{V_{x_{m}}\}$,  the nerves   $N({\mathcal V}')$ and $N({\mathcal V}")$ are connected (tree) and $N({\mathcal V}')$ contains the vertices $V_{x_i}~(i=1,2,...,m-1)$ and $V_{x_{m+1}}$.
As above, we put  
$$Y=X-\bigcup {\mathcal V}".$$
Note that (1) and (3) are equivalent and hence we can use the conditions of (3). By the arguments as above,  we can choose a closed and open set $Y'$ of $Y$ containing 
$x_{m+1}$ such that $Y'$ does not contain any $x_i~(i=1,2,...,m-1)$. Put $Y"=Y-Y'$. 
Consider the following families $ {\mathcal V}_1$ and $ {\mathcal V}_2 $ of open sets of $X$ defined by 
$$ {\mathcal V}_1= {\mathcal V}'|Y', ~~ {\mathcal V}_2= {\mathcal V}'|Y".$$  
Put $$ {\mathcal U}' ={\mathcal V}_1\cup {\mathcal V}_2 \cup {\mathcal V}".$$
By use of the cover $ {\mathcal U}'$ of $X$, we have the desired open cover  ${\mathcal U}$ of $X$ such that the  
mesh of ${\mathcal U}$ is less than $\epsilon$, the nerve  $N({\mathcal U})$ of ${\mathcal U}$ is a tree and there is a chain in ${\mathcal U}$ which follows from the pattern $[x_1\to x_2 \to x_3\to \cdots \to x_m\to x_{m+1}]$. 
This completes the proof. 
\end{proof}

 \section{Topological entropy on $G$-like continua and Cantor sets which have the freely tracing property by free chains} 
\quad\

In [3], by use of ergodic theory method, Blanchard, Glasner, Kolyada and Maass proved that if a map $f:X \to X$ of a  compact metric space $X$ has positive topological entropy, then there is an uncountable $\delta$-scrambled set of $f$ for some $\delta >0$ and hence  the dynamics $(X,f)$ is Li-Yorke chaotic. 
In [8], Huang and Ye studied local entropy theory and they gave a characterization of positive topological entropy by use of entropy tuples. Moreover, in [18], by use of local entropy theory (IE-tuples), Kerr and Li proved the following more precise theorem.

\begin{thm}
{\rm ([18, Theorem 3.18])}
Suppose that $f:X \to X$ is a positive topological  entropy map on a compact metric space $X$, and $x_1,x_2,...,x_m~(m\geq 2)$ are finite distinct points of $X$ such that the tuple $(x_1,x_2,...,x_m)$ is an $IE$-tuple of $f$. If $A_i~(i=1,2,...,m)$ is any neighborhood of $x_i$, then there are Cantor sets $Z_i\subset A_i$ 
such that the following conditions hold; \\
$(1)$~every tuple of finite points in the Cantor set $Z=\cup_i Z_i$ is an $IE$-tuple; \\
$(2)$~for all $k \in \N$, $k$  distinct points $y_1,y_2,...,y_k\in Z$ and any points $z_1,z_2,...,z_k\in Z$, one has 
$$\liminf_{n\to \infty} \max\{d(f^n(y_i),z_i)|~1\leq i\leq k\}=0. $$  In particular, $Z$ is a $\delta$-scrambled set of $f$ for some $\delta >0$. 
\end{thm}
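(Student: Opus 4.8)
This is [18, Theorem 3.18], and the plan is to carry out Kerr and Li's argument, whose ingredients are used repeatedly afterwards. The first step is to unwind the hypothesis: since $(x_1,\dots,x_m)$ is an IE-tuple and each $A_i$ is a neighborhood of $x_i$, the collection $\mathcal{A}=\{A_1,\dots,A_m\}$ has, by definition, an independence set $I\subseteq\N$ of positive density; shrinking the $A_i$ — which is possible because the $x_i$ are distinct — I may assume $\overline{A_1},\dots,\overline{A_m}$ are pairwise disjoint, and, as recorded in Section 2, that $I$ satisfies condition $(kl)$ and is invariant under translation by $\Z$. The substance of the theorem is the construction of Cantor sets $Z_i\subseteq A_i$ carrying enough independence at every scale, and I would produce them by a tree induction.

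Enumerate $I=\{p_1<p_2<\cdots\}$ and fix $\epsilon_n\downarrow 0$ decreasing fast enough that $\sum_{k>n}\epsilon_k<\epsilon_n$. I would construct inductively finite point sets $W_n=\{w_\sigma:\sigma\in\Sigma_n\}$ indexed by the level-$n$ vertices $\Sigma_n=\{1,\dots,m\}\times\{0,1\}^n$ of a tree, together with increasing finite sets $I_n\subseteq I$ with $\bigcup_n I_n=I$, so that: (a) $w_\sigma\in A_i$ and $\operatorname{dist}(w_\sigma,\partial A_i)>\epsilon_n$ whenever $\sigma$ begins with $i$; (b) $w_{\sigma 0}$ and $w_{\sigma 1}$ lie within $\epsilon_n$ of $w_\sigma$ but are more than $2\sum_{k>n}\epsilon_k$ apart, so that along each branch $\beta\in\{0,1\}^{\N}$ the points $w_{\beta|n}$ converge and distinct branches have distinct limits; and, crucially, (c) the independence--shadowing property: for every $\sigma\in\Sigma_n$ and every map $\tau\colon I_n\to\Sigma_n$ there is a point $y\in X$ with $d(y,w_\sigma)<\epsilon_n$ and $f^{j}(y)\in B(w_{\tau(j)},\epsilon_n)$ for every $j\in I_n$. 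The step from stage $n$ to stage $n+1$ splits each $w_\sigma$ into two points realizing distinct continuations while enlarging $I_n$ to $I_{n+1}$, and preserving (c) through this split is the delicate point: this is exactly where positive density of $I$, its translation invariance, and condition $(kl)$ enter, through Kerr and Li's combinatorial (dynamical Sauer--Shelah type) machinery, which I would invoke from [18]. This step I expect to be the main obstacle to any self-contained account. Passing to limits along the tree gives Cantor sets $Z_i\subseteq\overline{A_i}\subseteq A_i$; set $Z=\bigcup_{i=1}^m Z_i$.

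Granting the construction, part (1) is short. Given distinct $z_1,\dots,z_k\in Z$ with respective neighborhoods $B_1,\dots,B_k$, for all large $n$ there are distinct $\sigma_1,\dots,\sigma_k\in\Sigma_n$ with $B(w_{\sigma_r},\epsilon_n)\subseteq B_r$. Then (c) shows that every finite $J\subseteq I_n$ is an independence set for $\{B_1,\dots,B_k\}$: for $(Y_j)_{j\in J}$ with $Y_j=B_{r(j)}$, extend $j\mapsto\sigma_{r(j)}$ to a map $\tau\colon I_n\to\Sigma_n$ and apply (c) to get $y$ with $f^{j}(y)\in B(w_{\sigma_{r(j)}},\epsilon_n)\subseteq Y_j$ for all $j\in J$, so $\bigcap_{j\in J}f^{-j}(Y_j)\neq\emptyset$. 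Since every finite subset of $I$ lies in some $I_n$, the positive-density set $I$ is itself an independence set for $\{B_1,\dots,B_k\}$, hence $(z_1,\dots,z_k)$ is an IE-tuple. For part (2) one arranges, in addition, that along a cofinal subsequence of stages the shadowing targets in (c) run through a fixed enumeration of all finite tuples of tree vertices; then every point of $Z$ has an orbit which, along the positive-density set $I$, comes within any prescribed tolerance of any prescribed finite tuple of points of $Z$ — this is precisely $\liminf_{n\to\infty}\max_{r}d(f^{n}(y_r),z_r)=0$ for distinct $y_1,\dots,y_k\in Z$ and arbitrary $z_1,\dots,z_k\in Z$. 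Finally, $Z$ is a $\delta$-scrambled set with $\delta=\operatorname{dist}(\overline{A_1},\overline{A_2})>0$: for distinct $x,y\in Z$, applying (2) with constant target $(w,w)$ for some $w\in Z$ gives $\liminf_n d(f^{n}(x),f^{n}(y))=0$, while applying it with targets $z_1\in Z_1$ and $z_2\in Z_2$ produces times $n$ with $f^{n}(x)$ near $z_1$ and $f^{n}(y)$ near $z_2$, whence $\limsup_n d(f^{n}(x),f^{n}(y))\ge\delta$.
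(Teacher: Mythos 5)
The paper does not prove this statement: it is quoted verbatim as [18, Theorem 3.18] and used as a black box, so there is no in-paper argument to compare yours against. Judged on its own terms, your outline is a faithful reconstruction of the shape of Kerr and Li's construction, and the deductions you draw from your shadowing property (c) are correct: the derivation of (1) (every finite subset of $I$ is realized as an independence configuration for neighborhoods of points of $Z$, hence $I$ itself is an independence set of positive density), the derivation of (2) by enumerating target tuples cofinally, and the scrambled-set conclusion with $\delta=\operatorname{dist}(\overline{A_1},\overline{A_2})$ (using $z_1=z_2$ for the $\liminf$ and $z_1\in Z_1$, $z_2\in Z_2$ for the $\limsup$) are all sound modulo (c).

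The genuine gap is the one you name yourself: the inductive step that splits each $w_\sigma$ into two points while enlarging $I_n$ and preserving (c). That step is not a technicality to be ``invoked'' --- it is the entire content of the theorem, resting on Kerr--Li's positive-density Ramsey-type lemma for independence sets (their Lemma 3.15/3.16 machinery, of which Propositions 3.5--3.7 of the present paper are the finitary shadows). Without an argument for it, nothing has been proved. Two smaller points: (i) the paper's remark is that each translate $k+I$ is again an independence set, not that $I$ is translation-invariant, so you should not assume $I=k+I$; (ii) in passing to the limit for (2), the shadowing estimates are stated for the finite-stage points $w_\sigma$, whereas (2) concerns the limit points of $Z$; since each $f^{j}$ with $j\in I_n$ is uniformly continuous, this is repaired by choosing $\epsilon_{n+1}$ small relative to the moduli of continuity of the finitely many maps $f^{j}$, $j\in I_n$, but that choice must be built into the induction rather than left implicit.
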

In [6],  by use of local entropy theory (IE-tuples), we proved the following theorem. 

\begin{thm}{\rm ([6])}
Suppose that $G$ is  any graph and $f:X \to X$ is a  homeomorphism on a $G$-like continuum $X$ with   positive topological entropy. Then $X$ contains an indecomposable subcontinuum. Moreover, if $G$ is a tree,  there is a pair of two distinct points $x$ and $y$ of $X$ such that the pair $(x,y)$ is an IE-pair of $f$ and 
 the irreducible continuum between $x$ and $y$ in $X$ is an indecomposable  subcontinuum. 
\end{thm}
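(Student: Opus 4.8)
The plan is to derive the theorem from the structural result that is this paper's main point: if $f$ is a positive topological entropy homeomorphism of a $G$-like continuum $X$, then there is a Cantor set $Z\subseteq X$ such that every finite tuple of points of $Z$ is an IE-tuple of $f$ and $Z$ has the freely tracing property by free chains. Granting this, both assertions of the theorem follow in a few lines (last paragraph below), so the substance is the construction of $Z$, which I would split into an ``IE part'' and a ``free chain part''.

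For the IE part: since $h(f)>0$, the Kerr--Li characterization of positive entropy [18] gives a nontrivial IE-pair, i.e.\ distinct points $a,b$ with $(a,b)$ an IE-pair of $f$. Fixing disjoint neighbourhoods $A\ni a$ and $B\ni b$ and applying Theorem 3.1 with $m=2$, one obtains Cantor sets $Z_1\subseteq A$ and $Z_2\subseteq B$ so that $Z:=Z_1\cup Z_2$ is a Cantor set all of whose finite tuples are IE-tuples of $f$; in particular $Z$ carries distinct-entry IE-tuples of every length, which is the input for the second part. That part is the lemma: \emph{if $x_1,\dots,x_m$ are distinct points of $X$ with $(x_1,\dots,x_m)$ an IE-tuple of $f$, then for every $\epsilon>0$ and every order $x_1\to\cdots\to x_m$ there is a finite open cover $\mathcal U$ of $X$ with $\mathrm{mesh}(\mathcal U)<\epsilon$ and $N(\mathcal U)=G$ containing a free chain that follows the pattern $[x_1\to\cdots\to x_m]$}. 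Applied to all finite subsets of $Z$, this says exactly that $Z$ has the freely tracing property by free chains.

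Proving this lemma is the step I expect to be the main obstacle. The idea is to convert the combinatorial independence of the IE hypothesis into the geometry of a nerve-$G$ cover. Choose small pairwise disjoint open $A_i\ni x_i$; then $\{A_1,\dots,A_m\}$ has a positive-density independence set $I$, which we may take to satisfy $(kl)$, so along a sufficiently long orbit block \emph{every} itinerary over $A_1,\dots,A_m$ at the times of $I$ is realized by a single point — the homeomorphism $f$ ``folds'' $X$ through the $A_i$ in any prescribed order and with any prescribed multiplicities. To turn this into a chain one uses that $X$ is $G$-like and $\varprojlim(X,f)\cong X$: construct a very fine open cover of $X$ with nerve $G$ from a $\delta$-map $X\to G$ and a chain-like refinement of $G$ whose nerve is again $G$, use the realized itineraries to extract from it a chain threading the links that meet $x_1,\dots,x_m$ in the prescribed order, and finally refine locally near those links to make the chain free. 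The delicate part is doing the last two moves simultaneously: the nerve must remain \emph{exactly} $G$ (so one cannot refine at will), while the interior links of the chain must be pushed away from the branch points of $G$ to make it free — this is precisely the obstruction that already forbids three points on an arc (Example 1), and it is here that the independence set of positive density, rather than positive entropy alone, and the homeomorphism hypothesis (through $\varprojlim(X,f)\cong X$) must be used.

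Finally, the deduction from $Z$. Assume first $G$ is a tree and pick three distinct $x,y,z\in Z$. Since $Z$ has the freely tracing property by chains, condition (1) of Proposition 2.1 holds for $D=\{x,y,z\}$, so by Proposition 2.1 the minimal subcontinuum $H\subseteq X$ containing $\{x,y,z\}$ is indecomposable and $D$ is vertically embedded in the composants of $H$. As $X$ is tree-like, the irreducible continuum $M$ between $x$ and $y$ in $X$ is the (unique) minimal subcontinuum of $X$ containing $\{x,y\}$; since $H$ contains $\{x,y\}$ we get $M\subseteq H$, and vertical embedding then forces $M=H$. Hence the irreducible continuum between $x$ and $y$ is the indecomposable continuum $H$, and $(x,y)$ is an IE-pair as $x,y\in Z$ — this is the ``moreover''. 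For a general graph $G$, where Proposition 2.1 does not apply, the freely tracing property by \emph{free} chains still produces an indecomposable subcontinuum: for distinct $x,y,z\in Z$ take, for each $\epsilon>0$, a nerve-$G$ cover $\mathcal U_\epsilon$ of mesh $<\epsilon$ with a free chain $\mathcal C_\epsilon$ following $[x\to y\to z]$; since free chains have arcs as nerves, a Hausdorff-limit continuum $M$ of the unions $\bigcup\mathcal C_\epsilon$ is arc-like, and because at every scale the subchain from the link of $x$ to the link of $z$ passes through the link of $y$, no proper subcontinuum of $M$ joining $x$ to $z$ contains $y$; by an argument parallel to the proof of Proposition 2.1 — with the obstruction ``$N(\mathcal U)$ is a tree'' replaced by ``the chain is free'' — this shows $M$ is indecomposable via the three-point characterization of indecomposability recalled in Section 2.
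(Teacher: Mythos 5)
Your top-level strategy --- deduce the theorem from the existence of a Cantor set $Z$ all of whose finite tuples are IE-tuples and which has the freely tracing property by free chains, and obtain that structural result by converting positive-density independence sets into free chains inside nerve-$G$ covers --- is exactly the paper's (Theorem 3.3, proved via Lemma 3.10, with the final deduction carried by Propositions 2.1 and 3.9). But there is a genuine gap in your middle step. The lemma you isolate asserts that for an \emph{arbitrary} IE-tuple of distinct points $x_1,\dots,x_m$ there is a mesh-$\epsilon$ cover with nerve $G$ containing a free chain whose links actually \emph{contain} the $x_i$ in the prescribed order. What the independence machinery yields (the paper's Lemma 3.10) is weaker: given open sets $A_i$ with an independence set of positive density, one gets a free chain through \emph{shrunken} open sets $s(A_i)\subset A_i$, with no control on whether $x_i\in s(A_i)$ --- the links are images $f^i(C_{k_j})$ of links of a fine cover selected only to meet certain sets $\bigcap_{l} f^{-j_l}(A_{\sigma(l)})$, and the prescribed points play no role in that selection. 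This is precisely why the paper does \emph{not} first produce $Z$ from Kerr--Li (Theorem 3.1) and then thread its points; it builds $Z$ as a nested intersection of such shrinks, iterating Lemma 3.10 over all $m_i!/2$ orders at every scale, so that each point of $Z$ lies by construction in arbitrarily small links that are threaded by free chains in every order. Your two-step decomposition (Cantor set first from Theorem 3.1, tracing lemma second) therefore does not close: the Kerr--Li Cantor set has no reason to consist of threadable points, and a point of an IE-tuple could well sit where every small link of a nerve-$G$ cover abuts three others, so that no free chain passes through it except as an end link.

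A secondary issue is the deduction of indecomposability for a general graph $G$ from three points and a Hausdorff limit. The paper's Proposition 3.9 needs the whole Cantor set: it fixes $s$ such that $G$ does not contain $s$ simple closed curves, supposes the minimal continuum $H\supset T$ decomposes as $A\cup B$, and runs a free chain through a zigzag pattern on $2s+1$ distinct points of $T$ alternating between $A\setminus B$ and $B\setminus A$, producing $s$ simple closed curves in the nerve --- a contradiction. Three points yield at most one circuit, which contradicts nothing when $G$ itself contains circles. The tree case in your final paragraph, via Proposition 2.1 and uniqueness of the irreducible continuum in a tree-like space, is fine.
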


The next theorem is a structure theorem for positive topological  entropy homeomorphisms on $G$-like continua. 
The result  is the main theorem in this paper which implies that for any graph $G$, a positive topological  entropy homeomorphism on a $G$-like continuum $X$ admits Cantor set $Z$ which  yields both some complicated structures in topology and the chaotic behaviors of  Kerr and Li [18] in dynamical systems. Especially, the Cantor set  $Z$ has the freely tracing property by free chains.

\begin{thm}  Let $G$ be any graph, $X$ a $G$-like continuum and $f:X \to X$ a  homeomorphism on $X$ with   positive topological entropy. Suppose that  $x_1,x_2,...,x_m~(m\geq 2)$ are finite distinct points of $X$ such that the tuple $(x_1,x_2,...,x_m)$ is an $IE$-tuple of $f$ and $A_i~(i=1,2,...,m)$ is any neighborhood of $x_i$. Then  
there are Cantor sets $Z_i \subset A_i$ and an indecomposable subcontinuum $H$ of $X$ 
such that the following conditions hold;\\
$(1)$ the Cantor set  $Z=\cup_{i=1}^mZ_i$ 
is vertically embedded to composants of $H$; i.e., if $x, y$ are distinct points of $Z$, then the irreducible  continuum $Ir(x,y;H)$ between $x$ and $y$ in $H$ is $H$,
\\
$(2)$~$Z$ has the freely tracing property by free chains; for any $m\in \N$ and any order $x_1\to x_2 \to \cdots \to x_m$ of $m$ distinct points 
$x_{i}~(i=1,2,...,m)$ of $Z$ and any $\epsilon >0$, there is an open cover ${\mathcal U}$ of $X$ such that the  
mesh of ${\mathcal U}$ is less than $\epsilon$, the nerve  $N({\mathcal U})$ of ${\mathcal U}$ is $G$ and there is a free chain in ${\mathcal U}$ which follows from the pattern $[x_1\to x_2 \to \cdots \to x_m],$ \\
$(3)$~every tuple of finite points in the Cantor set $Z$ is an $IE$-tuple of $f$, and \\
$(4)$~for all $k \in \N$, any  distinct $k$ points $y_1,y_2,...,y_k\in Z$ and any points $z_1,z_2,...,z_k\in Z$, the following condition  holds
$$\liminf_{n\to \infty} \max\{d(f^n(y_i),z_i)|~1\leq i\leq k\}=0. $$  In particular, $Z$ is a $\delta$-scrambled set of $f$ for some $\delta >0$. 
\end{thm}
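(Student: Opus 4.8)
The plan is to re-run the construction behind [18, Theorem 3.18] (our Theorem 3.1) while carrying the $G$-like structure of $X$ along at every stage, so that one and the same Cantor set inherits both the dynamical conclusions $(3)$--$(4)$ and the topological conclusions $(1)$--$(2)$. First I would make the usual reductions: shrink the neighbourhoods $A_i$ so that the closures $\overline{A_i}$ are pairwise disjoint, of small diameter, and contained in the originally prescribed neighbourhoods of the $x_i$, and fix a compatible metric $d$ on $X$. Since $(x_1,\dots,x_m)$ is an $IE$-tuple, the collection $\mathcal A=\{A_1,\dots,A_m\}$ has an independence set $I\subseteq\N$ of positive density, and by the remark preceding the definition of $IE$-tuple we may assume $I$ satisfies condition $(kl)$.

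The heart of the argument is an induction on $n\in\N$ producing three interlocked pieces of data. First, a refining system of open sets $\{V_w : w\in W_n\}$, indexed by finite word sets $W_n$, with $\overline{V_{wa}}\subseteq V_w$, $\operatorname{diam} V_w\to 0$, and partitioned into $m$ groups with the $i$-th group contained in $A_i$; this is exactly the tree of open sets used in [18], whose branch spaces are the Cantor sets $Z_i\subseteq A_i$, and whose branch intersections are non-empty --- so that $(3)$ and $(4)$ hold --- precisely because of the independence property together with $(kl)$. Second, a finite open cover $\mathcal U_n$ of $X$ with $N(\mathcal U_n)=G$ and mesh $<1/n$, each of whose elements is a union of stage-$n$ building blocks. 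Third, and crucially, a coherent system of \emph{free} chains: for every finite $\{w_1,\dots,w_k\}\subseteq W_n$ and every order on it there is a free chain in $\mathcal U_n$ following the pattern $[V_{w_1}\to\cdots\to V_{w_k}]$, with the stage-$(n+1)$ chains refining the stage-$n$ ones. The device behind the last two items is a routing step for $G$-like continua: starting from any sufficiently fine cover of $X$ with nerve $G$ (available since $X$ is $G$-like), one joins the prescribed open sets in the prescribed order by a chain through $X$ using connectedness, and then ``pinches'' the cover along that chain --- subdividing links that carry extra attachments, which lengthens the chain, and pushing the finitely many branch points and essential cycles of $G$ off the chain --- so as to keep the nerve equal to $G$ while making the chain free. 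Taking the limit over $n$ yields $Z=\bigcup_{i=1}^m Z_i$ together with $(2)$, the freely tracing property by free chains.

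The coherent system of free chains then determines a chainable (hence tree-like) subcontinuum $H$ of $X$ with $Z\subseteq H$ and with $H$ minimal among subcontinua of $H$ containing $Z$ --- the latter arranged in the construction, and possible because the chains are forced to run ``all the way'' through the separated sets $A_i$. By construction the restrictions of the covers $\mathcal U_n$ to $H$ can be taken with tree nerves through which the restricted chains still follow the prescribed patterns, so $Z$ satisfies hypothesis $(1)$ of the preceding Proposition relative to $H$; hence by that Proposition the minimal continuum $H$ containing $Z$ is indecomposable and $Z$ is vertically embedded in the composants of $H$, which is $(1)$ (and, since $|Z|\ge 3$, re-establishes indecomposability of a subcontinuum of $X$ as in Theorem 3.2). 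Finally $(3)$ and $(4)$ are read off verbatim from the Kerr--Li portion of the construction, since they depend only on the system $\{V_w\}$ and the independence set $I$; in particular $Z$ is a $\delta$-scrambled set of $f$ for some $\delta>0$.

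The main obstacle I anticipate is the routing step: maintaining the nerve \emph{exactly} equal to $G$ while (i) forcing an arbitrarily prescribed visiting order along a single free chain, (ii) respecting the already-chosen building blocks $\{V_w\}$, and (iii) doing all of this coherently through infinitely many refinements, so that the limit $H$ is genuinely chainable and $Z$ has the freely tracing property for \emph{every} finite subset in \emph{every} order. The delicate point is the interaction of a free chain with the branch points and essential cycles of $G$ --- one must show these can always be ``parked'' off the chain --- which is where the combinatorics special to graphs (as opposed to higher-dimensional polyhedra) is used, and where the positivity of the entropy enters, guaranteeing that the nerve-$G$ covers of $X$ are complicated enough to admit such long free chains (cf.\ [6]). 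A secondary bookkeeping subtlety is to make the Kerr--Li coding and the $G$-like routing share the single system $\{V_w\}$, so that $(1)$--$(2)$ and $(3)$--$(4)$ all hold for one and the same Cantor set $Z$.
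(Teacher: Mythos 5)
Your overall architecture (a Kerr--Li style refining tree of open sets carrying an independence set of positive density, interleaved with nerve-$G$ covers and free chains realizing all finite patterns, followed by passage to a minimal continuum $H$) matches the paper's strategy. But the step you yourself flag as ``the main obstacle'' is exactly the content of the paper's key lemma, and your proposed resolution of it --- the ``routing step'' --- does not work. You claim that, starting from any fine cover with nerve $G$, one can join arbitrarily prescribed open sets in an arbitrarily prescribed order by a chain ``using connectedness'' and then pinch the cover to make that chain free while keeping the nerve equal to $G$. This is false as a topological statement: by the paper's Example~1 and Proposition~2.1, already in $X=[0,1]$ no three points admit a chain following the pattern $[x_1\to x_2\to x_3]$ for a generic choice of order, and for tree-like continua the freely tracing property is \emph{equivalent} to the minimal continuum containing the points being indecomposable with the points in distinct composants. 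So no amount of cover surgery can manufacture the required chains; they must be produced by the dynamics. The actual mechanism (Lemma~3.10) is quite different from routing: one pulls the sets back along a finite subset $F$ of the independence set to get $m^n$ pairwise far-apart sets $\bigcap_{j\in F}f^{-j}(A_{\sigma(j)})$, chooses $n$ with $m^n>|E(G)|\cdot[(m-1)n+1][(m-1)^n+1]$ so that some \emph{already existing} free chain of a fine nerve-$G$ cover contains enough of these sets, applies the pigeonhole statement (Proposition~3.7) to find a coordinate $i\in F$ and links $C_{k_1},\dots,C_{k_m}$ of that chain realizing all symbols $1,\dots,m$ in increasing position, and then pushes the whole cover forward by $f^i$ to land the subchain in $A_1,\dots,A_m$ in the prescribed order. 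Your proposal contains no substitute for this counting argument, so as written it assumes the key lemma rather than proving it.

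Two smaller points. Your assertion that the coherent system of free chains makes $H$ chainable (hence tree-like), so that Proposition~2.1 applies, is unjustified and is not how the paper proceeds; for a general graph $G$ the paper proves directly (Proposition~3.9) that the freely tracing property forces any minimal continuum $H\supseteq T$ to be indecomposable, by observing that a decomposition $H=A\cup B$ together with a chain following $[a_0\to b_1\to a_1\to\cdots\to b_s\to a_s]$ would create more simple closed curves in the nerve than $G$ can contain. Moreover, that argument only yields that each composant meets $T$ in at most $s$ points, not that $T$ itself is vertically embedded; one must then extract the Cantor set $Z\subseteq T$ vertically embedded to composants via Kuratowski's independent-set theorem applied to the nowhere dense $F_\sigma$ relation $R=\{(x,y):x,y\ \text{lie in a proper subcontinuum of}\ H\}$. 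This extraction step is missing from your outline. The portions of your argument establishing $(3)$ and $(4)$ from the independence sets and the condition $(kl)$ (the paper's Lemma~3.8) are fine.
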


In the statement of Theorem 3.3, we need the condition that $X$ is a $G$-like continuum for a graph $G$. 
\\

Example 2. Let $g: Z\to Z$ be a homeomorphism on a Cantor set $Z$ which has  positive topological entropy. Let $X=$~Cone$(Z)$ be the cone  of $Z$ and let $f:X\to X$ be a homeomorphism which is the natural extension of $g$. Then $h(f)>0$ and $X$ is tree-like, but  $X$ is not $G$-like for any graph $G$. Note that $X$ contains no indecomposable subcontinuum. Also, if $D$ is a subset with $|D|\geq 3$, then $D$ does not have the freely tracing property by chains.
\\

 We will freely use the following facts from the local entropy theory. 

\begin{prop}{\rm ([18, Propositions, 3.8, 3.9])}
Let $X$ be a compact metric space and let $f: X \rightarrow X$ be a map.
\begin{enumerate}
\item
Let $(A_1, \ldots, A_k)$ be a tuple  of closed subsets of $X$ which has an independent set
of positive density. Then, there is an IE-tuple $(x_1, \ldots, x_k)$ with $x_i \in A_i$
for $1 \le i \le k$. 
\item
 $h(f) >0$ if and only if $f$ has an IE-pair $(x_1, x_2)$ with $x_1 \neq x_2$.
\item $IE_k$ is closed and $f \times \ldots \times f$ invariant subset of $X^k$.
\item 
If $(A_1, \ldots, A_k)$ has an  independence set
with positive density and, for $1 \le i \le k$,
${\mathcal A}_i$ is a finite collection of sets such that $A_i \subseteq \cup{\mathcal A}_i$, 
then there is $A'_i \in {\mathcal A}_i$ such that $(A'_1, \ldots, A'_k)$ has an  independence set
with positive density. 
\end{enumerate}
\end{prop}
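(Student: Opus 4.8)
The plan is to treat these as the foundational facts of Kerr--Li's local entropy theory and to prove them in the order (4) $\Rightarrow$ (1), then (2) using (1), with (3) handled directly. The two nontrivial ingredients are a density version of the Sauer--Shelah lemma and the standard dictionary between positive topological entropy of a finite open cover $\mathcal{U}$ and exponential growth of $N(\bigvee_{i<n}f^{-i}\mathcal{U})$; everything else is compactness and pigeonholing. Writing $\mathbf{A}=(A_1,\dots,A_k)$ and $\bar q(\mathbf{A})=\limsup_{n}\frac1n\max\{|J|:J\subseteq\{1,\dots,n\}\text{ is an independence set for }\mathbf{A}\}$, I would note first that ``$\mathbf{A}$ has an independence set of positive density'' is equivalent to $\bar q(\mathbf{A})>0$: one direction is immediate, and the other follows by a correspondence-principle argument applied to the hereditary, shift-invariant ``pattern subshift'' associated with $\mathbf{A}$ (shift invariance uses that $f$ is surjective, automatic for the homeomorphisms in this paper), where an invariant measure giving positive weight to $\{y_0=1\}$ together with Birkhoff's theorem yields a configuration whose support has positive density and all of whose finite subsets are independence sets. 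This equivalence is the one genuinely global input, and it is the step I expect to be the main obstacle in a self-contained treatment. I would also record the elementary monotonicity of the independence property under enlarging the sets.

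For (4) I would reduce to the basic case: if $\bar q(A_1,\dots,A_k)>0$ and $A_1=P\cup Q$, then $\bar q(P,A_2,\dots,A_k)>0$ or $\bar q(Q,A_2,\dots,A_k)>0$. Iterating this over the coordinates and over the (finitely many) pieces of each $\mathcal{A}_i$, together with monotonicity, gives the statement with $A_i'\in\mathcal{A}_i$. To prove the basic case, fix a large $n$ and an independence set $J\subseteq\{1,\dots,n\}$ for $(A_1,\dots,A_k)$ with $|J|\geq cn$. For every $\psi\in\{1,\dots,k\}^{J}$ choose a witness $x_\psi$ (so $f^{j}(x_\psi)\in A_{\psi(j)}$ for $j\in J$) and record the refined name $\rho_\psi\in\{P,Q,2,\dots,k\}^{J}$ remembering at each $j$ which of $P,Q$ (when $\psi(j)=1$) or which $A_{\psi(j)}$ the point $f^{j}(x_\psi)$ lies in. Since $\psi\mapsto\rho_\psi$ is injective there are $k^{|J|}$ refined names, and the density Sauer--Shelah lemma produces $J'\subseteq J$ with $|J'|\geq c'n$ over which the realized refined names fill the whole cube; this forces $J'$ to be an independence set both for $(P,A_2,\dots,A_k)$ and for $(Q,A_2,\dots,A_k)$. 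Letting $n\to\infty$ gives $\bar q>0$ for one of them. (For $k=1$ there is nothing to shatter onto, and the Sauer--Shelah step is replaced by the pigeonhole that half of each witness orbit stays in $P$ or half stays in $Q$.)

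Given (4), I would deduce (1) without Zorn's lemma. Iterating (4) with the non-split coordinates covered trivially, cover each closed $A_i^{(n)}$ by finitely many of its closed subsets of diameter $<1/(n+1)$ and pass to one of them; this produces nested nonempty compacta $A_i=A_i^{(0)}\supseteq A_i^{(1)}\supseteq\cdots$ with $\operatorname{diam}A_i^{(n)}\to 0$, each tuple still having an independence set of positive density. Set $\{x_i\}=\bigcap_n A_i^{(n)}\subseteq A_i$. For any open $U_i\ni x_i$ we have $A_i^{(n)}\subseteq U_i$ for $n$ large, so by monotonicity $\{U_1,\dots,U_k\}$ has an independence set of positive density; hence $(x_1,\dots,x_k)$ is an IE-tuple. (That the density may shrink as $U_i$ shrinks is harmless: the definition of IE-tuple only requires positivity.)

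For (2), the ``$\Leftarrow$'' direction is elementary: given an IE-pair $(x_1,x_2)$ with $x_1\neq x_2$, pick open $U_i\ni x_i$ with $\overline{U_1}\cap\overline{U_2}=\emptyset$; then $\{U_1,U_2\}$ has an independence set $I$ of positive density, and for the open cover $\mathcal{W}=\{X\setminus\overline{U_1},X\setminus\overline{U_2}\}$ any two distinct patterns $\psi,\psi'\colon I\cap\{0,\dots,n-1\}\to\{1,2\}$ drive their witness points into distinct members of $\bigvee_{i<n}f^{-i}\mathcal{W}$, so $N(\bigvee_{i<n}f^{-i}\mathcal{W})\geq 2^{|I\cap\{0,\dots,n-1\}|}$ and $h(f,\mathcal{W})>0$. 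For ``$\Rightarrow$'' one reduces (standard) to a two-element open cover $\mathcal{U}=\{U_0,U_1\}$ with $h(f,\mathcal{U})>0$ and then invokes the Kerr--Li estimate that positive cover-entropy forces the disjoint closed pair $(X\setminus U_0,X\setminus U_1)$ to have $\bar q>0$ (a hitting-set/Sauer--Shelah bound on the sub-boxes of $\mathcal{U}$-names compatible with points of $X$); part (1) then yields an IE-pair with one coordinate in each of these two disjoint sets, which are therefore distinct. Finally (3): if IE-tuples $(x_1^{(n)},\dots,x_k^{(n)})$ converge to $(x_1,\dots,x_k)$, then any open $A_i\ni x_i$ contains $x_i^{(n)}$ for large $n$, so $\{A_i\}$ has an independence set of positive density and the limit is an IE-tuple, i.e.\ $IE_k$ is closed; and if $(x_1,\dots,x_k)\in IE_k$ and $A_i\ni f(x_i)$ is open, then $f^{-1}(A_i)\ni x_i$ is open, $\{f^{-1}(A_i)\}$ has an independence set $I$ of positive density, and $I+1$ (of the same density) is one for $\{A_i\}$ via $f^{-j}(f^{-1}(A))=f^{-(j+1)}(A)$, giving $f\times\cdots\times f(IE_k)\subseteq IE_k$, with the reverse inclusion by the same argument using $I-1$ when $f$ is a homeomorphism.
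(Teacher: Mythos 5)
You should first be aware that the paper contains no proof of this proposition: it is quoted verbatim from Kerr and Li with the citation [18, Propositions 3.8, 3.9] and used as a black box, so your attempt is really being measured against their original arguments rather than anything in this paper. Your overall architecture does match theirs: derive (4) from a Karpovsky--Milman/Sauer--Shelah-type shattering statement, obtain (1) from (4) by shrinking nested compacta, prove the easy direction of (2) by the standard subcover count against the cover $\{X\setminus\overline{U_1},X\setminus\overline{U_2}\}$, and handle (3) directly (these last three pieces of your sketch are correct). But you explicitly outsource the two hardest inputs --- the equivalence of ``positive independence density'' with ``existence of a single positive-density independence set,'' and the estimate behind (2)$\Rightarrow$ --- and the one combinatorial step you do spell out contains a genuine error.

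In (4) you claim that the $k^{|J|}$ injectively realized refined names in the $(k{+}1)$-letter alphabet $\{P,Q,2,\ldots,k\}$ must, by a density Sauer--Shelah lemma, restrict onto the \emph{whole} $(k{+}1)$-cube over some positive-density $J'\subseteq J$, and hence that $J'$ is an independence set for \emph{both} $(P,A_2,\ldots,A_k)$ and $(Q,A_2,\ldots,A_k)$. Both claims fail. If no witness ever visits $Q$ (e.g.\ $Q=\emptyset$ after the split), then the name set $S$ consists of all $k^{|J|}$ words over the $k$-letter sub-alphabet $\{P,2,\ldots,k\}$, so $S|_{J'}$ omits the symbol $Q$ at every coordinate and no nonempty $J'$ realizes the full cube; and of course $(Q,A_2,\ldots,A_k)$ then has no independence set at all, so the conclusion must be a disjunction, not a conjunction. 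Quantitatively, $|S|=k^{|J|}=(m-1)^{|J|}$ for alphabet size $m=k+1$ is exactly the Karpovsky--Milman extremal size $\sum_{i\le 0}\binom{n}{i}(m-1)^{n-i}=(m-1)^n$ at which $S$ need not shatter even a single coordinate onto the full alphabet, so no counting lemma can deliver what you assert. The correct input is Kerr and Li's combinatorial lemma, whose conclusion is restriction onto the $k$-letter sub-cube \emph{omitting one distinguished symbol}, and the delicate point --- which your sketch elides --- is arranging the argument so that omitting either the symbol $P$ or the symbol $Q$ suffices while still meeting that lemma's cardinality hypothesis. Until that step and the two outsourced inputs are supplied, this is a plan for reproving Kerr--Li rather than a proof.
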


To prove Theorem 3.3, we need the following results.

\begin{prop}{\rm ([6, Proposition 3.1])}
Let $I \subseteq \N$ be a set with positive density and $n \in \N$. Then,
there is a finite set $F \subseteq I$ with $|F|=n$ and a positive density set $B$ such that
$F+ B \subseteq I$.
\end{prop}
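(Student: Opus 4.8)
\emph{Step 1: reduction to a pattern of shifts.} First I would reduce to the following cleaner statement: there is a finite set $F_0\subseteq\N\cup\{0\}$ with $0\in F_0$, $|F_0|=n$, and $d_*\big(\bigcap_{g\in F_0}(I-g)\big)>0$, where $d_*$ denotes the lower density appearing in the definition of ``positive density''. Indeed, granting such an $F_0$, put $C:=\bigcap_{g\in F_0}(I-g)$; since $0\in F_0$ we have $C\subseteq I$, and $C\neq\emptyset$ (even of positive lower density), so we may pick $a\in C$ and set $F:=a+F_0$ and $B:=(C-a)\cap\N$. Then $F\subseteq I$ with $|F|=n$ (because $a+g\in I$ for every $g\in F_0$), and, since lower density is invariant under integer translation and under deleting finitely many points, $d_*(B)=d_*(C)>0$; finally for $b=c-a\in B$ and $f=a+g\in F$ we get $f+b=c+g\in I$, so $F+B\subseteq I$. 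Conversely any working $F_0$ may be translated by $-\min F_0$ without changing the lower density of the intersection, so the normalization $0\in F_0$ costs nothing.

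\emph{Step 2: a uniform counting bound and the induction set-up.} I would build $F_0$ by induction on $n$, the case $n=1$ being $F_0=\{0\}$. For the inductive step, assume $F_0$ with $0\in F_0$, $|F_0|=k$, has $J:=\bigcap_{g\in F_0}(I-g)$ of lower density $\beta>0$ (hence $J\subseteq I$); it suffices to find $d\ge1$ with $d\notin F_0$ and $d_*\big(J\cap(I-d)\big)>0$, for then $F_0\cup\{d\}$ works. Since $d_*(I)>0$, the set of $x$ with $I\cap[x+1,x+D]=\emptyset$ has upper density at most $1/D$ (consecutive such $x$ differ by more than $D$); choosing $D=\lceil 4/\beta\rceil$, for all large $N$
$$\sum_{d=1}^{D}\big|J\cap(I-d)\cap[1,N]\big|=\sum_{x\in J\cap[1,N]}\big|I\cap[x+1,x+D]\big|\ \ge\ |J\cap[1,N]|-\tfrac{N}{D}-1\ \ge\ \tfrac{\beta}{2}\,N.$$

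\emph{Step 3: the crux — extracting a shift of positive \emph{lower} density.} The bound in Step 2 shows $\bigcup_{d=1}^{D}\big(J\cap(I-d)\big)$ has positive lower density, but this alone does not hand over one of the $D$ summands of positive lower density, because lower density is not subadditive ``downwards''; this is the main obstacle. The plan is to pass to the dynamical picture: let $T$ be the shift on $\{0,1\}^{\Z}$, let $\mu$ be a weak-$*$ limit of the orbit averages $\frac1{N_j}\sum_{x=1}^{N_j}\delta_{T^x\mathbf 1_I}$ along a sequence $N_j\to\infty$ chosen (as far as possible) to realize the relevant liminfs; then $\mu$ is $T$-invariant and, the cylinders involved being clopen, $\mu([x_0{=}1])\ge d_*(I)$ and the $\mu$-mass of the cylinder coding membership in $J$ is $\ge\beta$. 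Applying Khintchine's recurrence theorem gives a syndetic set of $d$ with positive $\mu$-mass of the corresponding two-fold intersection, and iterating $\lceil\log_2 n\rceil$ times (splitting off one new recurrence time at each stage, as in the elementary proof of finite — not arithmetic-progression — multiple recurrence) produces distinct $0=d_0<d_1<\cdots<d_{n-1}$ with $\mu\big(\bigcap_i T^{-d_i}[x_0{=}1]\big)>0$, whence $\bigcap_i(I-d_i)$ has positive density by reading back the cylinder. The delicate point — which I expect to be the hard part — is that the correspondence principle naturally yields only positive upper (Banach) density for $\bigcap_i(I-d_i)$, whereas the statement demands the liminf notion; the upgrade has to be extracted by coupling the recurrence argument with the fact that the inequality of Step 2 holds for \emph{all} large $N$ and that the counting functions $N\mapsto|J\cap(I-d)\cap[1,N]|$ are nondecreasing, so that differences failing the liminf bound cannot collectively carry the sum. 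Once $F_0=\{d_0,\dots,d_{n-1}\}\ni 0$ is produced, Step 1 delivers the desired $F\subseteq I$ with $|F|=n$ and the positive-density set $B$ with $F+B\subseteq I$.
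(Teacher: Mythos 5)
The paper itself offers no proof of this proposition --- it is imported verbatim from [6] --- so I can only judge your argument on its own terms. Steps 1 and 2 are correct: the reduction to finding $F_0\ni 0$ with $d_*\bigl(\bigcap_{g\in F_0}(I-g)\bigr)>0$ is an honest equivalence, and the counting identity, together with the observation that two points of $J\subseteq I$ each followed by an $I$-free window of length $D$ must lie more than $D$ apart, does give your lower bound on $\sum_{d=1}^{D}\bigl|J\cap(I-d)\cap[1,N]\bigr|$ for all large $N$.

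The argument fails at Step 3, and you have correctly located the failure without repairing it: phrases like ``as far as possible,'' ``has to be extracted,'' and ``I expect to be the hard part'' describe a proof you hope exists rather than one you have. Pigeonhole on the Step 2 inequality, or Furstenberg correspondence plus Khintchine, only ever produce a $d$ for which $J\cap(I-d)$ has positive \emph{upper} density. Worse, the upgrade you want is not merely delicate but impossible: the inductive step is false for lower density. Take a sequence $(d_k)$ of positive integers in which every integer occurs infinitely often, let $N_0=0$ and let $N_k$ grow so fast that $N_{k-1}=o(N_k)$ and $\sum_{j\le k}d_j=o(N_{k-1})$, and set
$$I=\bigcup_{k\ge1}\bigl\{x\in[N_{k-1},N_k)\;:\;\lfloor x/d_k\rfloor\ \mbox{is even}\bigr\}.$$
Then $\liminf_N |I\cap[1,N]|/N=1/2$, yet for each fixed $d$ and each of the infinitely many $k$ with $d_k=d$ the set $I\cap(I-d)$ misses $[N_{k-1},N_k-d)$ entirely (if $\lfloor x/d\rfloor$ is even then $\lfloor(x+d)/d\rfloor$ is odd), so $\bigl|I\cap(I-d)\cap[1,N_k]\bigr|\le N_{k-1}+d$ and $I\cap(I-d)$ has lower density $0$ for every $d\ge1$. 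Since for $n=2$ your Step 1 reduction is reversible --- any admissible $F=\{f_1,f_2\}$ and $B$ force $d_*\bigl(I\cap(I-(f_2-f_1))\bigr)\ge d_*(B)>0$ --- no argument can close this gap for the $\liminf$ notion of density that this paper adopts. Your Steps 1--2 do constitute a complete and short proof when ``positive density'' is read as positive upper (or Banach) density; that is the version your method actually delivers, and you should either prove the statement in that form or identify the extra structure of the independence sets $I$ arising in the applications that rescues the $\liminf$ version.
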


\begin{prop}{\rm ([6, Proposition 3.2])}
Let $X$ be a compact metric space and let  $f:X \rightarrow X$ be a map.
Let ${\mathcal A}$ be a collection which has an  independence set with positive density and $n \in \N$. Then, there is a 
finite set $F$ with $|F|=n$ such that
\[{\mathcal A}_F = \{ \bigcap_{i \in F} f^{-i}(Y_i): Y_i \in {\mathcal A}\}
\]
has an is an independence set with positive density.
\end{prop}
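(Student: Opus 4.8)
The goal is to show that if $\mathcal A$ has an independence set with positive density and $n\in\N$, then for a suitably chosen finite set $F$ with $|F|=n$, the collection $\mathcal A_F=\{\bigcap_{i\in F}f^{-i}(Y_i):Y_i\in\mathcal A\}$ also has an independence set with positive density. The plan is to combine Proposition 3.6 with a bit of bookkeeping about how independence sets for $\mathcal A$ push down to independence sets for $\mathcal A_F$. First I would invoke the hypothesis: fix an independence set $I\subseteq\N$ for $\mathcal A$ with positive density, and—as remarked in Section 2—assume $I$ satisfies condition $(kl)$, i.e. for any finite $J\subseteq I$, any $(Y_j)\in\prod_{j\in J}\mathcal A$ and any $Y_0\in\mathcal A$ we have $Y_0\cap\bigcap_{j\in J}f^{-j}(Y_j)\neq\emptyset$ (we may also close $I$ under adding any fixed integer, which will be needed to compare shifted copies).

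Next I would apply Proposition 3.6 to $I$ and $n$: this yields a finite set $F\subseteq I$ with $|F|=n$ and a positive-density set $B$ with $F+B\subseteq I$. I claim this $F$ works and $B$ (or a shift of it) is the desired independence set for $\mathcal A_F$. To check the independence condition for $\mathcal A_F$: take any finite $J'\subseteq B$ and, for each $j\in J'$, an element $W_j=\bigcap_{i\in F}f^{-i}(Y^{(j)}_i)\in\mathcal A_F$ with $Y^{(j)}_i\in\mathcal A$. We must show $\bigcap_{j\in J'}f^{-j}(W_j)\neq\emptyset$. Rewriting, $f^{-j}(W_j)=\bigcap_{i\in F}f^{-(i+j)}(Y^{(j)}_i)$, so the intersection in question is $\bigcap_{j\in J'}\bigcap_{i\in F}f^{-(i+j)}(Y^{(j)}_i)$. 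The index set involved is $\{i+j:i\in F,\,j\in J'\}\subseteq F+B\subseteq I$. The one subtlety is that the map $(i,j)\mapsto i+j$ need not be injective on $F\times J'$; when two pairs $(i,j),(i',j')$ collide with $i+j=i'+j'$ but $Y^{(j)}_i\neq Y^{(j')}_{i'}$, we cannot directly demand both constraints at a single index. Here I would use that for any such collision we only need the intersection to be nonempty, and since $i+j\in I$ and $I$ is an independence set for $\mathcal A$, we may—for each value $\ell$ in the range—pick one representative constraint $Y_\ell\in\mathcal A$ among those indexed to $\ell$; the independence property of $I$ for $\mathcal A$ then gives $\bigcap_{\ell}f^{-\ell}(Y_\ell)\neq\emptyset$. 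But this nonempty set need not lie inside the original intersection.

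To handle this cleanly, the right move is to arrange $F$ so that no collisions occur, i.e. $F+B$ is a direct sum. Concretely, after obtaining $F$ and $B$ from Proposition 3.6, I would thin $B$ to a still-positive-density subset $B'$ with $B'-B'$ disjoint from $F-F$ (which is a finite set): one removes from $B$ a bounded-density set of "bad" elements, using that $F-F$ is finite so the forbidden differences are finitely many, and a standard counting argument preserves positive lower density when we pass to an arithmetic-progression-like or greedily-thinned subset. With $B'$ in hand, the sums $i+j$ for $i\in F$, $j\in B'$ are all distinct and lie in $I$, so for any finite $J'\subseteq B'$ and any choice of the $W_j\in\mathcal A_F$, the set $\bigcap_{j\in J'}\bigcap_{i\in F}f^{-(i+j)}(Y^{(j)}_i)$ is an intersection of $|F|\cdot|J'|$ sets of the form $f^{-\ell}(Y_\ell)$ over \emph{distinct} indices $\ell\in I$, hence nonempty by the independence property of $I$ for $\mathcal A$. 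Thus $B'$ is an independence set of positive density for $\mathcal A_F$, as required. The main obstacle is exactly this collision issue: ensuring that one can thin $B$ to avoid the finitely many forbidden differences from $F-F$ while keeping positive lower density; I would address it by the elementary observation that removing, from each "block" $\{1,\dots,N\}$, the elements whose distance to another element of $B$ lies in the finite set $F-F$ deletes at most a fixed fraction (bounded by $|F-F|$ times the density of $B$, which can be made $<1$ after an initial harmless sparsification of $B$), leaving a positive-density remainder.
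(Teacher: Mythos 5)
The paper itself offers no proof of this proposition --- it is quoted verbatim from [6, Proposition 3.2] --- so there is no in-text argument to compare yours against; I can only judge the proposal on its own terms. Your strategy is the right one and almost certainly the one underlying [6]: apply the preceding proposition (Proposition 3.5 in this paper's numbering, not 3.6 as you cite it; your heading ``Proposition 3.7'' is likewise off by one) to obtain $F\subseteq I$ with $|F|=n$ and a positive-density set $B$ with $F+B\subseteq I$, rewrite $f^{-j}(W_j)=\bigcap_{i\in F}f^{-(i+j)}(Y^{(j)}_i)$, and invoke independence of $I$ at the indices in $F+J'$. You also correctly isolate the one genuine obstacle: $(i,j)\mapsto i+j$ need not be injective on $F\times J'$, and when two incompatible constraints from $\mathcal A$ land on the same index the intersection really can be empty (take the full shift on $\{0,1\}^{\Z}$, $\mathcal A$ two disjoint cylinders, $F=\{0,1\}$, $B=\N$). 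So some collision-avoidance is necessary, and thinning $B$ is the correct remedy.

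The only weak point is how you justify the thinning. The first procedure you describe --- deleting every element of $B$ whose distance to another element of $B$ lies in $F-F$ --- can delete all of $B$ (e.g.\ $B$ a union of adjacent pairs with $1\in F-F$), and the bound ``$|F-F|$ times the density of $B$'' does not control the deleted \emph{fraction} of $B$; pre-sparsifying $B$ does not help, since the bad pairs are defined relative to $B$ itself. The repair is the greedy selection you mention only in passing: with $M=\max F-\min F$, scan $B$ from the left and always select the first element exceeding the last selected one by more than $M$. Every discarded element lies within $M$ of a selected one, so $|B'\cap[1,N]|\geq (|B\cap[1,N]|-M)/(M+1)$, hence $B'$ keeps positive lower density, while every nonzero difference of $B'$ exceeds $M$ and so avoids $F-F$, making $(i,j)\mapsto i+j$ injective on $F\times B'$. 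With that substitution (and noting that $(B'-B')\cap(F-F)$ should be required to be $\{0\}$ rather than empty), your argument is complete and correct.
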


Let $m\geq 2$ and let  $\{1,2,...,m\}^n$ be the set of all functions from $\{1,2,...,n\}$ to $\{1,2,...,m\}$.
For $\sigma \in \{1,2,...,m\}^n~(m\geq 2)$, we write $\sigma=(\sigma(1),\sigma(2),...\sigma(n))$,  where $\sigma(i) \in \{1,2,...,m\}$. Note that $|\{1,2,...,m\}^n|=m^n$.

\begin{prop}{\rm (cf. [6, Proposition 3.3])}
 Let $ m, n \in \N$,  and $\sigma_1, \ldots, \sigma_{[(m-1)n+1][(m-1)^n+1]} $ be any sequence of distinct elements of $\{1,...,m\}^n$.
Then there are $1\leq i \leq n$ and $$ \ 1\le k_1<k_2 < k_3 <  \ldots < k_{m} \leq [(m-1)n+1][(m-1)^n+1]$$ such that 
$\sigma_{k_j}(i) =j$ for $j=1,...,m$.
\end{prop}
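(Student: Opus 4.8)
The plan is to argue by contradiction via a greedy pigeonhole argument. Write $N=[(m-1)n+1][(m-1)^n+1]$. For a fixed coordinate $i\in\{1,\dots,n\}$, note that the desired conclusion for $i$ --- the existence of $k_1<\dots<k_m$ with $\sigma_{k_j}(i)=j$ --- says exactly that the sequence $\sigma_1(i),\dots,\sigma_N(i)$ of elements of $\{1,\dots,m\}$ contains $1,2,\dots,m$ as a (not necessarily consecutive) subsequence. So I would assume, for contradiction, that no coordinate $i$ has this property.

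To exploit that assumption, for each $i$ and each $t$ with $0\le t\le N$ I would introduce the greedy statistic $g_i(t)=$ the largest $j$ for which $1,2,\dots,j$ embeds as a subsequence in $\sigma_1(i),\dots,\sigma_t(i)$ (and $g_i(t)=0$ if the value $1$ does not occur among the first $t$ terms). The contradiction hypothesis is precisely that $g_i(N)\le m-1$ for all $i$. The one elementary fact to establish is the recursion: $g_i(0)=0$, and for $t\ge1$ one has $g_i(t)=g_i(t-1)+1$ if $\sigma_t(i)=g_i(t-1)+1$ and $g_i(t)=g_i(t-1)$ otherwise. (A length-$j$ embedding whose last term is position $t$ forces $\sigma_t(i)=j$ together with a length-$(j-1)$ embedding strictly before $t$; conversely a length-$(j-1)$ embedding before $t$ and $\sigma_t(i)=j$ combine into one of length $j$.) In particular each $g_i$ is nondecreasing, jumps by exactly $1$ whenever it jumps, and --- being bounded by $m-1$ --- jumps at most $m-1$ times over $t\in\{1,\dots,N\}$.

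Next I would set $S=\{\,t\in\{1,\dots,N\}:g_i(t)>g_i(t-1)\text{ for some }i\,\}$, so that $|S|\le(m-1)n$ by the previous paragraph. The points of $S$ cut $\{1,\dots,N\}$ into at most $(m-1)n+1$ consecutive blocks, and on each block $B$ the vector $(g_1(t),\dots,g_n(t))$ is constant, say equal to $(c_1,\dots,c_n)$ with each $c_i\le m-1$. For every $t\in B$ we then have $g_i(t)=g_i(t-1)$ for all $i$, so the recursion gives $\sigma_t(i)\ne c_i+1$; hence $\sigma_t$ lies in $\prod_{i=1}^n\bigl(\{1,\dots,m\}\setminus\{c_i+1\}\bigr)$, a set of cardinality $(m-1)^n$. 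Since the $\sigma_t$ are pairwise distinct, $|B|\le(m-1)^n$, and summing over the at most $(m-1)n+1$ blocks yields
\[
N\le[(m-1)n+1](m-1)^n<[(m-1)n+1]\bigl[(m-1)^n+1\bigr]=N,
\]
which is absurd. Hence $g_i(N)\ge m$ for some $i$, and the definition of $g_i$ then supplies the required $1\le k_1<\dots<k_m\le N$ with $\sigma_{k_j}(i)=j$ for $j=1,\dots,m$.

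I expect the only delicate point to be the careful verification of the recursion for $g_i$, and with it the claim that on the block through a breakpoint $t$ one still has $\sigma_t(i)\ne c_i+1$ for the block's constant value $c_i$: if $t$ is itself a breakpoint for coordinate $i$ then $\sigma_t(i)=c_i\ne c_i+1$, and otherwise $\sigma_t(i)\ne g_i(t-1)+1=c_i+1$ directly. Everything else is bookkeeping, and the degenerate parameter ranges (e.g.\ $n=1$, or $m=1$, where the distinctness hypothesis is vacuous or the claim trivial) need no separate treatment.
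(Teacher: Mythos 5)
Your proof is correct, and it reaches the bound $[(m-1)n+1][(m-1)^n+1]$ by a genuinely different route than the paper. The paper argues forwards: it first shows (claim $(*)$) that any $(m-1)^n+1$ distinct elements of $\{1,\dots,m\}^n$ must realize all $m$ values in some coordinate (else they sit inside a product $B(1)\times\cdots\times B(n)$ of size $\le (m-1)^n$), then chops the sequence into $(m-1)n+1$ consecutive blocks of exactly that size, assigns to each block a ``full'' coordinate, pigeonholes the $(m-1)n+1$ blocks over the $n$ coordinates to get $m$ blocks sharing one coordinate $i$, and reads off $k_1<\cdots<k_m$ by taking one witness per block (the ordering of the blocks gives the ordering of the $k_j$). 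You instead run the contrapositive through the greedy subsequence statistic $g_i$: the block structure is not fixed in advance but emerges as the level sets of $(g_1,\dots,g_n)$, of which there are at most $(m-1)n+1$ because each coordinate jumps at most $m-1$ times, and distinctness caps each level set at $(m-1)^n$ since its members avoid the value $c_i+1$ in coordinate $i$. The two proofs use the same two ingredients (the product bound $(m-1)^n$ and a pigeonhole over $(m-1)n+1$ blocks) but organize them dually; yours is the standard Erd\H{o}s--Szekeres/Dilworth-style chain argument and is slightly tighter (your blocks need only $(m-1)^n$ elements rather than $(m-1)^n+1$, so the same argument would in fact give the conclusion already for sequences of length $[(m-1)n+1](m-1)^n+1$), while the paper's fixed-block version is perhaps quicker to state since it never needs the recursion for $g_i$ or the careful treatment of breakpoints that you correctly flag and resolve.
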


\begin{proof}
First, we prove the following claim $(*)$: If $B$ is a subset of $\{1,2,...,m\}^n$ with 
$|B|=(m-1)^n+1$, then  
there is  $1\leq i \leq n$ such that 
$$B(i)=\{1,2,...,m\}, $$  where 
$B(i)=\{\sigma(i)|~\sigma \in B\}$.

Suppose, on the contrary that 
for each $1\leq j\leq n$, 
$$|B(j)|\leq m-1,$$ where 
$B(j)=\{\sigma(j)|~\sigma \in B\}.$ 
Then we may consider that the set $B$ is  a subset of $$B(1)\times B(2) \times \cdots \times B(n)$$ whose cardinality is $\leq (m-1)^n$. This is a contradiction. 

Next, we will prove this proposition. We divide the given  sequence 
$$\sigma_1, \ldots, \sigma_{[(m-1)n+1][(m-1)^n+1]}$$ into $(m-1)n+1$ subsequences  as follows. Let 
$$B_1=\{\sigma_1, \ldots, \sigma_{(m-1)^n+1}\},$$
$$B_2=\{\sigma_{(m-1)^n+2}, \ldots, \sigma_{2[(m-1)^n+1]}\},$$
$$\cdots $$
$$B_{(m-1)n+1}=\{\sigma_{[(m-1)n][(m-1)^n+1]+1}, \ldots, \sigma_{[(m-1)n+1][(m-1)^n+1]}\}.$$
Since $|B_s|= (m-1)^n+1
~(s=1,2,..,(m-1)n+1)$, the above claim $(*)$ implies that for each $B_s$, 
there is $1\leq i_s \leq n$ such that $B_s(i_s)=\{\sigma (i_s)|~\sigma \in B_s\}=\{1,..,m\}$. Define a function
$F:{\mathcal B}=\{B_s|~1 \leq s \leq (m-1)n+1\} \to \{1,2,..,n\}$ by $F(B_s)=i_s$. Since 
$|{\mathcal B}|=(m-1)n+1$, we can find $1\leq i \leq n $ such that $|F^{-1}(i)|\geq m$. Then we can choose 
$1\leq s_1< s_2 < \cdots < s_m \leq (m-1)n+1$ such that 
$B_{s_j}(i)=\{1,2,..,m\}~(j=1,2,...,m)$. By use of this fact, we can choose  $\sigma_{k_j}\in B_{s_j}$ such that $\sigma_{k_j}(i) =j$ for $j=1,...,m$. Then 
$$ \ 1\leq k_1 < k_2 <  \ldots < k_{m} \leq [(m-1)n+1][(m-1)^n+1]$$ and  $\sigma_{k_j}(i) =j$ for $j=1,...,m$.
\end{proof}

To check the chaotic behaviors of Kerr and Li ([18, Theorem 3.18]), we need the following lemma.

\begin{lem}
Let  $f: X \rightarrow X$ be a map of a compact metric space $X$.
Suppose that $(A_1, \ldots, A_k)$ is a tuple  of closed subsets of $X$ which has an independent set
of positive density. Then there is a tuple $(A'_1, \ldots, A'_k)$ of closed subsets of $X$ which has an independent set with positive density such that $A'_j \subset A_j~(j=1,2,...,k)$,  and if $h:\{1,2,...,k\}\to \{1,2,...,k\}$ is any function, then there is $n_h\in \N$ such that $f^{n_h}(A'_j)\subset A_{h(j)}$ for each $j=1,2,...,k$.  
\end{lem}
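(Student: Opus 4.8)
The plan is to pack all $k^{k}$ self-maps of $\{1,\dots ,k\}$ into a single finite block of times by invoking Proposition~3.6 (the strengthened statement about $\mathcal{A}_{F}$), and then to read the refined tuple off the resulting collection. Concretely, I would set $N=k^{k}$ and list the functions $\{1,\dots ,k\}\to\{1,\dots ,k\}$, together with one extra copy of the identity, as $g_{0},g_{1},\dots ,g_{N}$, where $g_{0}=\mathrm{id}$ and $\{g_{1},\dots ,g_{N}\}$ is exactly the set of all such functions (so $\mathrm{id}$ occurs both as $g_{0}$ and as some $g_{t}$ with $t\ge 1$). Applying Proposition~3.6 to $\mathcal{A}=\{A_{1},\dots ,A_{k}\}$ with $n=N+1$ yields a finite set $F=\{m_{0}<m_{1}<\dots <m_{N}\}$ such that $\mathcal{A}_{F}=\{\bigcap_{i\in F}f^{-i}(Y_{i}):Y_{i}\in\mathcal{A}\}$ has an independence set $J$ of positive density. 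For $j=1,\dots ,k$ I would put
$$B^{j}=\bigcap_{t=0}^{N}f^{-m_{t}}\bigl(A_{g_{t}(j)}\bigr),$$
which lies in $\mathcal{A}_{F}$ (take $Y_{m_{t}}=A_{g_{t}(j)}\in\mathcal{A}$), and define $A'_{j}=f^{m_{0}}(B^{j})$.

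Then I would verify the four required properties. Each $B^{j}$ is closed, hence compact (closed in the compact space $X$), so $A'_{j}=f^{m_{0}}(B^{j})$ is compact and therefore closed. Since $g_{0}=\mathrm{id}$, we have $B^{j}\subseteq f^{-m_{0}}(A_{j})$, whence $A'_{j}=f^{m_{0}}(B^{j})\subseteq f^{m_{0}}\bigl(f^{-m_{0}}(A_{j})\bigr)\subseteq A_{j}$. Given any $h:\{1,\dots ,k\}\to\{1,\dots ,k\}$, I choose $t\in\{1,\dots ,N\}$ with $g_{t}=h$ and set $n_{h}=m_{t}-m_{0}$; then $n_{h}\ge 1$ because $m_{0}<m_{t}$ are integers, and $f^{n_{h}}(A'_{j})=f^{m_{t}-m_{0}}\bigl(f^{m_{0}}(B^{j})\bigr)=f^{m_{t}}(B^{j})\subseteq A_{g_{t}(j)}=A_{h(j)}$ for every $j$. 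Finally, since $B^{1},\dots ,B^{k}\in\mathcal{A}_{F}$, the set $J$ is an independence set of positive density for the collection $\{B^{1},\dots ,B^{k}\}$; and from $B^{j}\subseteq f^{-m_{0}}\bigl(f^{m_{0}}(B^{j})\bigr)=f^{-m_{0}}(A'_{j})$ one gets, for every finite $S\subseteq J$ and every choice $(j_{s})_{s\in S}$ of indices in $\{1,\dots ,k\}$,
$$\emptyset\neq\bigcap_{s\in S}f^{-s}\bigl(B^{j_{s}}\bigr)\subseteq\bigcap_{s\in S}f^{-(s+m_{0})}\bigl(A'_{j_{s}}\bigr),$$
so $J+m_{0}$, which has positive density, is an independence set for $(A'_{1},\dots ,A'_{k})$.

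As for the main obstacle: the substantive content here is entirely carried by Proposition~3.6 (quoted from [6]), and the present lemma essentially repackages it, so the points that actually need care are the bookkeeping ones isolated above. Time $0$ is not at our disposal, so one must anchor all the sets at $\min F=m_{0}$ in order to force $A'_{j}\subseteq A_{j}$, and then recover the independence property by translating $J$ to $J+m_{0}$ (using the elementary remark in Section~2 that translates of an independence set of positive density are again such). Likewise, the identity must be listed twice — once as $g_{0}$ to realize $A'_{j}\subseteq A_{j}$, and once among $g_{1},\dots ,g_{N}$ — so that the time $n_{h}$ attached to $h=\mathrm{id}$ can be taken strictly positive, i.e. genuinely in $\N$. (One could alternatively run the argument directly from Proposition~3.5 and condition $(kl)$, but routing it through $\mathcal{A}_{F}$ keeps the independence bookkeeping entirely inside a single application of Proposition~3.6.)
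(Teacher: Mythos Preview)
Your proof is correct and uses essentially the same idea as the paper: enumerate all $k^{k}$ self-maps of $\{1,\dots,k\}$, apply Proposition~3.5/3.6 to obtain a finite block $F$ of times that encodes them all simultaneously, and define the refined sets as the corresponding intersections of preimages. The only difference is bookkeeping: the paper takes exactly the alternative you mention at the end, invoking Proposition~3.5 together with condition~$(kl)$ and setting $A'_j = A_j \cap \bigcap_{i_s\in F} f^{-i_s}(A_{h_s(j)})$ directly, so that the inclusion $A'_j\subset A_j$ comes from the explicit intersection with $A_j$ at time~$0$ rather than from your anchor-at-$m_0$ device. Your route through Proposition~3.6 with the extra identity $g_0$ and the translated independence set $J+m_0$ is an equivalent packaging that makes the independence verification for $(A'_1,\dots,A'_k)$ more explicit than the paper's somewhat terse ``is a desired family''.
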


\begin{proof} Suppose that  $(A_1, \ldots, A_k)$ has an independent set $I \subset \N$ 
with positive density satisfying the above condition $(kl)$.  Let $K$ be the set $\{1,2,\cdots,k\}^{k}$ of all functions on $\{1,2,...,k\}$.
Since $|K|=k^k(=p)$, we can put $K=\{h_1,h_2,\cdots,h_p\}$. 
By Proposition 3.5, there is a finite set $F\subset I$ with $|F|=p$ and  a positive density set $B$ such that $F+B\subset I$. 
Let $F=\{i_1,i_2,\cdots,i_p\}$. For each $1\leq j\leq k$, we 
put  \[A'_j=A_j\cap \bigcap_{i_s \in F} f^{-i_s}(A_{h_s(j)})\]
Then ${\mathcal A'}=\{A'_j|~1\leq j\leq k\}$ is a desired family. In fact, $f^{i_s}(A'_j)\subset A_{h_s(j)}$  for each $j=1,2,...,k$.
\end{proof}

\begin{prop}
Let  $X$ be a $G$-like continuum for a graph $G$ and let $T$ be a Cantor set in $X$. Suppose that 
$T$ has the freely tracing property by chains. 
Then 
any minimal continuum $H$ in $X$ containing $T$ is indecomposable and there is $s\in \N$ such that 
for any composant $c$ of $H$, $|c\cap T|\leq s$. In particular, no infinite points of $T$ belong to the same composant of $H$. Also, there is a subset $Z$ of $T$ such that $Z$ is a Cantor set and $Z$ is vertically embedded to composants of $H$. 
\end{prop}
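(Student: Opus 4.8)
The plan is the following. First, by compactness there is at least one minimal continuum $H$ containing $T$ (the intersection of any nested family of continua containing $T$ is again a continuum containing $T$, so Zorn applies); in contrast to the tree-like case (Proposition 2.1) this $H$ need not be unique, which is why the statement refers to \emph{any} such $H$. Fix one. Everything reduces to the claim
$(\star)$: there is $s=s(G)\in\N$, depending only on $G$, such that no proper subcontinuum of $H$ contains more than $s$ points of $T$.

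To prove $(\star)$, suppose $E\subsetneq H$ is a proper subcontinuum and $x_1,\dots,x_N\in E\cap T$ are distinct. Since $H$ is minimal over $T$ and $E$ is closed and proper, $T\not\subseteq E$, so we may fix $w\in T\setminus E$ and put $\eta=d(w,E)>0$. I would then combine the freely tracing property of $T$ with Proposition 3.7. For a mesh $\epsilon<\eta$ smaller than all mutual distances among $x_1,\dots,x_N,w$, and for suitable orders on these points, the freely tracing property supplies finite open covers $\mathcal U$ with $N(\mathcal U)=G$ and chains in $\mathcal U$ realizing those orders. A chain is a \emph{simple} arc in the nerve $G$ — the closure condition in the definition of chain forbids chords — hence it meets each topological edge of $G$ in at most one subarc; on the other hand, $\{U\in\mathcal U : U\cap E\ne\emptyset\}$ has connected nerve $W\subseteq G$ containing the links of all the $x_t$ but, because $\epsilon<\eta$, not the link containing $w$. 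Recording for each $x_t$ the combinatorial position of its link relative to $W$ in each of finitely many such covers yields an element $\sigma_t\in\{1,\dots,m\}^n$, where $m$ and $n$ are bounded solely in terms of the finitely many edges, branch points and independent cycles of $G$. Once $N\ge[(m-1)n+1][(m-1)^n+1]$, Proposition 3.7 returns a coordinate $i$ and points $x_{k_1},\dots,x_{k_m}$ simultaneously occupying prescribed positions in the $i$-th cover, which forces that cover to contain a configuration — a circular chain, or a chain compelled to enter and leave $W$ too often — incompatible with $N(\mathcal U)=G$. Taking $s:=[(m-1)n+1][(m-1)^n+1]-1$ proves $(\star)$. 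I expect this quantitative step — fitting the combinatorics of simple arcs versus connected subgraphs of $G$ into the framework of Proposition 3.7 and pinning down the correct parameters $m,n$ — to be the main obstacle.

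Granting $(\star)$, the rest is short. If $H=H_1\cup H_2$ with $H_1,H_2$ proper subcontinua, then, since $T$ is perfect and $H$ is minimal over $T$, the relatively open sets $T\setminus H_1$ and $T\setminus H_2$ are nonempty, hence infinite; as $T\setminus H_2\subseteq T\cap H_1$, the proper subcontinuum $H_1$ would contain infinitely many points of $T$, contradicting $(\star)$. So $H$ is indecomposable. Next, let $c=c(p)$ be any composant of $H$ and suppose $x_0,\dots,x_s\in c\cap T$ were $s+1$ distinct points; each $x_j$ lies in a proper subcontinuum $K_j\ni p$, and since $H$ is indecomposable the finite union $K_0\cup\dots\cup K_s$ is again a proper subcontinuum, now containing $s+1$ points of $T$ — contradicting $(\star)$. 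Hence $|c\cap T|\le s$ for every composant $c$, and in particular no infinite subset of $T$ lies in a single composant.

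Finally, to extract the Cantor set $Z$, consider $R=\{(x,y)\in T\times T : x\ne y,\ c(x)=c(y)\}$. Working in the hyperspace $C(H)$ of subcontinua of $H$ (a Polish space), $R$ is the off-diagonal part of the projection to $T\times T$ of $\{(x,y,K)\in T\times T\times C(H) : x\in K,\ y\in K,\ K\ne H\}$, which is locally closed, hence Borel; so $R$ is analytic and therefore has the Baire property in $T\times T$. By $(\star)$, every vertical section $R_x=(c(x)\cap T)\setminus\{x\}$ is finite (of cardinality at most $s$), hence nowhere dense in the perfect set $T$; so by the Kuratowski--Ulam theorem $R$ is meager in $T\times T$. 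A Mycielski-type independence theorem then produces a Cantor set $Z\subseteq T$ with $(Z\times Z)\setminus\Delta$ disjoint from $R$; that is, no two distinct points of $Z$ lie in a common composant of $H$, so $Z$ is vertically embedded in the composants of $H$, completing the proof.
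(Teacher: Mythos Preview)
Your logical skeleton is sound --- reducing everything to the claim $(\star)$ that no proper subcontinuum of $H$ meets $T$ in more than $s=s(G)$ points, and then deriving indecomposability, the composant bound, and the Cantor set $Z$ from it --- but the proof of $(\star)$ itself is not a proof, only a plan whose central step you explicitly leave open. You propose to encode ``the combinatorial position of its link relative to $W$'' as elements $\sigma_t\in\{1,\dots,m\}^n$ and invoke Proposition~3.7, yet you never specify what $m,n$ are, what the coordinates record, or why Proposition~3.7 produces a configuration incompatible with $N(\mathcal U)=G$. Proposition~3.7 is a pigeonhole about long sequences in a product; it is used in the paper for an entirely different purpose (inside Lemma~3.10), and there is no evident way to make it do the work you want here.

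The paper's argument is much more direct and bypasses Proposition~3.7 altogether. Fix $s$ so that $G$ contains fewer than $s$ simple closed curves. Given a proper subcontinuum $E\subsetneq H$ with $a_0,\dots,a_s\in E\cap T$, the key observation you are missing is that $T\setminus E$ is a \emph{nonempty open} subset of the Cantor set $T$, hence infinite; so one may also choose distinct $b_1,\dots,b_s\in T\setminus E$ (not just a single $w$). Apply the freely tracing property once, to the single alternating pattern
\[
[a_0\to b_1\to a_1\to b_2\to a_2\to\cdots\to b_s\to a_s],
\]
with mesh $\epsilon<d(\{b_1,\dots,b_s\},E)$. Since $E$ is connected, the nerve of $\{U\in\mathcal U:U\cap E\ne\emptyset\}$ is a connected subgraph $W$ of $N(\mathcal U)=G$ containing all the links through the $a_i$ but none through the $b_j$. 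The tracing chain leaves $W$ and returns $s$ times, and closing each excursion $a_{i-1}\to b_i\to a_i$ by a path inside $W$ yields $s$ distinct simple closed curves in $G$, a contradiction. (The paper carries this out first for $H=A\cup B$ to get indecomposability, then says ``by similar arguments'' for the composant bound; the version just sketched gives your $(\star)$ directly.)

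Your extraction of $Z$ is correct but heavier than necessary. The relation $R$ is in fact $F_\sigma$: with $\{U_i\}$ an open base of $H$, set $R_i=\{(x,y)\in T^2:\ \exists\,F\in C(H),\ F\subset H\setminus U_i,\ x,y\in F\}$; each $R_i$ is closed and $R=\bigcup_i R_i$. Since each composant meets $T$ in at most $s$ points, the sections of $R$ are finite, so $R$ is meager $F_\sigma$, and Kuratowski's Baire-category independence theorem (rather than the full analytic/Mycielski machinery) immediately furnishes the Cantor set $Z\subset T$ independent for $R$.
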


\begin{proof} For the graph $G$, we can find sufficiently large $s\in \N$ such that $G$ does not contain $s$ simple closed curves. 

 Consider the family $\mathcal{K}$ of all subcontinua of $X$ containing $T$. By the  the Zorn's lemma, there is a minimal element $H$ of  $\mathcal{K}$.
We will show that $H$ is indecomposable. 
Suppose, on the contrary, that $H$ is decomposable. There are two proper subcontinua 
$A$ and $B$ of $H$ such that $H=A\cup B$.
Since $H$ is a minimal continuum (=irreducible continuum) containing $T$,  there are two points 
$x,y \in T$ with $x\in A-B, y\in B-A$. Since $T$ 
is perfect, $T\cap (A-B)$ and $T\cap (B-A)$ are  infinite sets. Then there are distinct points $a_i \in  T\cap (A-B)~(i=0,1,2,...,s), b_i \in  
T\cap (B-A)~(i=1,2,...,s)$. Let $\epsilon >0$ be sufficiently small so that $d(A,\{b_i|~i=1,2,...,s\})>\epsilon$. 
Since $T$ has the the freely tracing property by chains, there is an open cover ${\mathcal U}$ of $X$ such that the  
mesh of ${\mathcal U}$ is less than $\epsilon$, the nerve  $N({\mathcal U})$ of ${\mathcal U}$ is $G$ and there is a chain in ${\mathcal U}$ which follows from the pattern $$[a_0\to b_1 \to a_1 \to b_1 \to a_2 \to b_2 \to \cdots \to a_{s-1}\to b_s \to a_s].$$
Since $A$ is connected, for any two points $z, z'\in A$ there a chain $\{C_1,C\2,...,C_n\}$ in  ${\mathcal U}$ from 
$z$ to $z'$ such that $C_j\cap A\neq \emptyset$. 
By use of these facts, we can easily find 
distinct $s$ simple closed curves in $N({\mathcal U})=G$. This is a contradiction. By the similar arguments, we see that for any composant $c$ of $H$, $|c\cap T|\leq s$. To find a desired Cantor set $Z\subset T$, we consider the following subset of $T^2$:
$$R=\{(x,y)\in T^2|~\mbox{there is a proper subcontinuum}~F \mbox{ of}~H~\mbox{with}~ x,y \in F\}$$

Let $\{U_i|~i\in \N\}$ be an open base of $H$ and let 
$$R_i=\{(x,y) \in T^2|~\mbox{there is a subcontinuum}~F~\mbox{ in }~H-U_i~\mbox{with}~ x,y \in F\}.$$ Note that $R_i$ is a closed set of $T^2$ and 
$$R=\bigcup \{R_i|~i\in \N\}.$$ 
Since each composant of $H$ contains no infinite points of $T$, we see that  $R$ is a nowhere dense $F_{\sigma}$-set in $T^2$ (see 
 [21, p. 71, Application 2]). By 
 [21, p. 70, Corollary 3], there is a Cantor set $Z$ in $T$ such that $Z$ is independent in $R$, i.e., if $x,y \in Z$ and $x \neq y$, then $(x,y)\notin R$. Then we see that 
$Z$ is vertically embedded to composants of $H$. 
This completes the proof. 
\end{proof}

The following lemma is the key lemma to prove the  main theorem. 

\begin{lem}
Let $G$ be a graph and let $f$ be a homeomorphism on a $G$-like continuum $X$ with positive topological entropy. 
 Suppose that ${\mathcal A}$ is a finite open collection of $X$ which has an independence set of $f$ with positive density, any distinct elements of  ${\mathcal A}$ are disjoint,  and $|{\mathcal A}|=m\geq 2$. Then for any $\epsilon >0$ and any  order $A_1\to A_2\to \cdots \to A_{m}$ of all elements of ${\mathcal A}$,  
there exists a finite open cover ${\mathcal V}$ of $X$ satisfying the following conditions; \\
$(1)$~the mesh of ~${\mathcal V}$ is less than $\epsilon$,\\
$(2)$~the nerve $N({\mathcal V})$ of~ ${\mathcal V}$ is $G$, \\
$(3)$~for each $A\in  {\mathcal A}$ there is a shrink  $s(A)\in {\mathcal V}$  with $s(A)\subset A$ such that $$s({\mathcal A})=\{s(A)|~A\in  {\mathcal A}\}$$ has an independence set with positive density, and \\
$(4)$~there is a free chain $[s(A_1)\to s(A_2) \to \cdots  \to s(A_{m})]$ 
from $s(A_1)$ to $s(A_{m})$ in ${\mathcal V}$ which  follows from the pattern $[A_1\to A_2\to\cdots \to A_{m}]$. 
\end{lem}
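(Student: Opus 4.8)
The plan is to build the cover $\mathcal V$ by starting from an arbitrary fine cover of $X$ with nerve $G$ and then \emph{subdividing} it repeatedly so that the prescribed order $A_1\to A_2\to\cdots\to A_m$ is realized by a free chain whose links are shrinks of the $A_i$'s, all the while keeping an independence-set-with-positive-density witness alive. Concretely, first fix $\epsilon>0$ and, using $G$-likeness, choose a finite open cover $\mathcal W$ of $X$ with mesh $<\epsilon/2$ and $N(\mathcal W)=G$. Since $\mathcal A$ is a finite open collection with an independence set of positive density and $|\mathcal A|=m$, for each $A_i$ the collection $\{W\cap A_i : W\in\mathcal W\}$ covers $A_i$, so by Proposition~3.4(4) we may pass from $(A_1,\dots,A_m)$ to a choice of one link $W_i\cap A_i$ from each and retain positive-density independence; replacing $\mathcal W$ by a common refinement across all $i$ if necessary, we arrange that for each $i$ there is a distinguished element of $\mathcal W$ contained in $A_i$, and that these $m$ distinguished elements together with their witness still have an independence set with positive density (repeated application of Proposition~3.4(4), which is safe because there are only finitely many $A_i$).

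Next I would produce the \emph{free chain connecting these distinguished links in the correct order}. Because $X$ is a continuum and $\mathcal W$ is a connected open cover, between any two links there is a chain in $\mathcal W$; concatenating the chains from $W_1$ to $W_2$, from $W_2$ to $W_3$, \dots, from $W_{m-1}$ to $W_m$ yields a connected "path" in $N(\mathcal W)=G$ passing through $W_1,W_2,\dots,W_m$ in that order. The issue is that this path need not be a (simple) chain, let alone a free chain: it may backtrack, it may use a link adjacent to more than its neighbours, and $G$ may have branch points or loops. To fix this I would pass to a subdivision of $\mathcal W$: replace each $W\in\mathcal W$ by the finitely many nonempty sets $W\cap(\text{stars of a second, much finer cover})$, which refines the cover, strictly decreases mesh, and — crucially — does \emph{not change the nerve up to homeomorphism} (this is exactly the standard "the nerve of a fine enough cover of a $G$-like continuum is $G$" fact invoked in the excerpt). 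After enough subdivision, the chosen links $W_i$ become very small, the portions of the path near them become long thin chains through many tiny links, and by travelling along $G$ avoiding branch points (possible since the $W_i$ can be taken in the interiors of edges of $G$, off the branch set) and deleting backtracks, one extracts a genuinely \emph{free} subchain $[s(A_1)\to s(A_2)\to\cdots\to s(A_m)]$ with $s(A_i)\subset A_i$: freeness of an interior link amounts to saying the corresponding vertex of $N(\mathcal V)$ has degree $2$, which is guaranteed once the link sits in the interior of an edge of $G$ and the cover is fine enough. Throughout, each $s(A_i)$ is taken to be a subset of the originally chosen $W_i$, hence of $A_i$.

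Finally I would re-run Proposition~3.4(4) one last time: at the subdivision stage each old distinguished link $W_i$ gets partitioned into finitely many pieces of the subdivided cover, $s(A_i)$ being one of them, so applying Proposition~3.4(4) again (finitely many applications, one per $i$, using that the independence set of $\{W_1,\dots,W_m\}$ survives) we may \emph{choose} the pieces $s(A_i)$ so that $s(\mathcal A)=\{s(A_1),\dots,s(A_m)\}$ still has an independence set with positive density — and, by the remark in Section~2 about translating independence sets and condition $(kl)$, we retain all the structure we need. Call the resulting subdivided cover $\mathcal V$: it has mesh $<\epsilon$, nerve $G$, it contains the shrinks $s(A_i)\subset A_i$ with $s(\mathcal A)$ independent of positive density, and it contains the free chain following the pattern $[A_1\to\cdots\to A_m]$.

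The main obstacle is step two: arranging that the connecting path can be straightened into a \emph{free} chain while simultaneously keeping the links inside the prescribed open sets $A_i$ and keeping the nerve equal to $G$ (not merely homotopy equivalent, and not a proper subgraph). This is precisely where the hypothesis that $X$ is $G$-like for a \emph{graph} $G$ is used, and where one must be careful about branch points and simple closed curves of $G$ — choosing the $W_i$ in edge-interiors and subdividing enough is the device that makes "degree $2$ at interior links of the chain" achievable; the independence bookkeeping, by contrast, is routine given Proposition~3.4(4) and the observations of Section~2.
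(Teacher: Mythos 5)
There is a genuine gap, and it is at the step you yourself flag as the main obstacle. Your plan is to join the distinguished links $W_1,\dots,W_m$ by chains inside a single fixed cover $\mathcal W$ with nerve $G$, concatenate, and then ``straighten'' the resulting walk into a free chain by subdividing and deleting backtracks. This cannot work: a free chain in a cover with nerve $G$ corresponds to a simple arc in $G$ whose interior vertices have degree $2$, and a simple arc cannot visit prescribed regions of $G$ in an arbitrary prescribed order. Subdivision refines the cover but does not change the relative positions of the $W_i$ in the nerve, so the obstruction persists at every stage. The paper's own Example~1 is a counterexample to your straightening claim: for $X=[0,1]$ no set of three points has the freely tracing property by chains, because whenever the middle point of the prescribed order does not lie between the other two on the arc, no chain in any chain cover follows that pattern. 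More generally, the lemma is false if read as a purely topological statement about the fixed sets $A_1,\dots,A_m$ and refinements of covers adapted to them --- which is essentially what your argument produces, since you never apply the dynamics to the cover.

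The independence hypothesis, which you treat as routine bookkeeping via Proposition~3.4(4), is in fact the engine that overcomes exactly this obstruction. The paper's proof shrinks $\mathcal A$ to $\mathcal A'$, uses Propositions~3.5 and~3.6 to form the $m^n$ ``itinerary'' sets $\mathcal A'_F=\bigl\{\bigcap_{j\in F}f^{-j}(Y_j)\;\bigl|\;Y_j\in\mathcal A'\bigr\}$ with $|F|=n$ and $m^n>|E(G)|\cdot[(m-1)n+1][(m-1)^n+1]$, and takes a cover $\mathcal U$ with nerve $G$ of mesh so small that each link meets at most one itinerary set. Pigeonholing over the $|E(G)|$ edges of $G$ produces a single free chain $\mathcal C$ containing at least $[(m-1)n+1][(m-1)^n+1]$ of the selected links; the combinatorial Proposition~3.7 applied to the itineraries read along $\mathcal C$ yields one coordinate $i\in F$ and indices $k_1<\cdots<k_m$ with $f^i(C_{k_j})\subset A_j$ for all $j$. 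The final cover is $\mathcal V=f^i(\mathcal U)$ with $s(A_j)=f^i(C_{k_j})$: the prescribed order is realized not by rearranging a chain among the $A_i$ but by pushing a free chain forward under a suitable power of $f$, whose image lands in the $A_j$ in the required order precisely because of independence. Your proposal contains no analogue of this mechanism, so the key conclusion (4) is not established.
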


\begin{proof} 
 We put $${\mathcal A}=\{A_1,A_2,\cdots, A_{m}\}.$$
For each $A\in {\mathcal A}$, we can choose an open set $A' \subset \overline{A'} \subset A$ so that $$ {\mathcal A'}=\{A'|A\in  {\mathcal A}\}$$ has an independence set $I$  with positive density (see Proposition 3.4).
We choose a sufficiently small positive number $\epsilon'< \epsilon$ so that $d(\overline{A'},X-A)>\epsilon'$ for each $A\in  {\mathcal A}$. 
Let $E(G)$ be the set of all edges of the graph $G$ and let $|E(G)|$ be the cardinality of the set $E(G)$.   
 We can choose a sufficiently large natural number $n\in \N$ such that 
$$m^n>|E(G)|\cdot [(m-1)n+1][(m-1)^n+1].$$
By Proposition 3.5, we have $F$ with $|F|=n$ and $B$ satisfying the condition of Proposition 3.5. 
Put $F=\{j_1,j_2,...,j_n\}$.
Recall 
\[{\mathcal A'}_F = \{ \bigcap_{j \in F} f^{-j}(Y_j)~|~ Y_j \in {\mathcal A'}\}=\{ \bigcap_{i=1}^n f^{-j_i}(A_{\sigma (i)})~|~\sigma \in \{1,2,...,m\}^n\}
.\]
Note that $|{\mathcal A'}_F|=m^n$ and 
${\mathcal A'}_F$  has the 
 independence set $B$ with positive density. 
Since $F$ is a finite set,  we can choose a sufficiently small 
positive number $\delta >0$ such that every pair of distinct elements of  ${\mathcal A'}_F$ is at least $\delta$ apart and 
if $U$ is a subset of $X$ whose diameter is less than $\delta$, then the diameter  of $f^i(U)~(i\in F)$ is less than $\epsilon'$. Since $X$ is $G$-like, we can choose an open cover ${\mathcal U}$ of $X$ such that $N({\mathcal U})$ is 
$G$ and the mesh of ${\mathcal U}$ is less than $\delta$. Since $\delta$ is so small, we see that each element of ${\mathcal U}$ intersects at most one element of 
${\mathcal A'}_F$. By Proposition 3.4 (4),  we obtain a subcollection  ${\mathcal U'}$ of ${\mathcal U}$ such that 
each element of ${\mathcal A'}_F$ intersects with only one element of  ${\mathcal U'}$, 
 $|{\mathcal U'}|=m^n$ and  
 the family ${\mathcal U'}$ 
has an independent set of positive density.  Then we can choose a free chain 
${\mathcal C}$ in ${\mathcal U}$ such that ${\mathcal C}$ contains at least 
$$m^n/|E(G)|\geq [(m-1)n+1][(m-1)^n+1]$$
 many elements of ${\mathcal U'}$. 
Put ${\mathcal C}=\{C_1,C_2,\cdots,C_p\}$. 
 Note that each $U'\in {\mathcal U'}$ determines the element 
$\sigma \in \{1,2,...,m\}^n$. By Proposition 3.7, we can choose $i\in F$ such that there is a sequence $$ \ 1\le k_1<k_2 < k_3 <  \ldots < k_{m} \leq [(m-1)n+1][(m-1)^n+1]$$ such that $C_{k_j}\in {\mathcal U'}$ and 
$$f^i(C_{k_j})\cap A'_j\neq \emptyset$$ 
for each $j=1,2,...,m.$
By the choice of $\epsilon'$, $f^i(C_{k_j})\subset A_j$
for each $j=1,2,...,m.$
Then the free chain $$[f^i(C_{k_1})\to  f^i(C_{k_2})\to \cdots \to f^i(C_{k_m})]$$ in $f^i({\mathcal U})$ 
follows from the pattern 
$[A_1\to A_2\to\cdots \to A_{m}].$ 
Put $s(A_j)=f^i(C_{k_j})$ and ${\mathcal V}=f^i({\mathcal U})$. 
Then $$s({\mathcal A})=\{s(A)|~A\in  {\mathcal A}\}$$
 is the desired family.
This completes the proof. 
\end{proof}

Now, we will prove  Theorem 3.3. 

\begin{proof} 
Let ${\mathcal A_1}=\{A_i|~i=1,2,...,m\}$. Then we may assume that  ${\mathcal A_1}$ has an independence set with positive density and the closures of any distinct elements of  ${\mathcal A_1}$ are disjoint. Also, we may assume $|{\mathcal A_1}|=m~(=m_1)\geq 3$ and 
the mesh of ${\mathcal A_1}$ is less than $\epsilon_1\in 
(0, 1/2)$. 
By Lemma 3.8, we have a collection 
$${\mathcal A'_1}=\{A_1',A_2',\cdots,A_{m_1}'\}$$ 
of open sets which has an independent set with positive density and satisfies the following condition (KL) for ${\mathcal A_1}$;\\

~~ (KL)~$A_i' \subset A_i~(i=1,2,...,m_1)$,  and if $h:\{1,2,...,m_1\}\to \{1,2,...,m_1\}$ is any function, then there is
$n_h\in \N$ such that $f^{n_h}(A_i')\subset A_{h(i)}$ for each $i=1,2,...,m_1$.  \\

 Consider the set ${\mathcal A'_1}(m_1)$ of 
 orders (=permutations) of   all elements 
of ${\mathcal A'_1}$. 
 Note that the cardinality of ${\mathcal A'_1}(m_1)$ is 
$m_1!=m_1\cdot (m_1-1)\cdot \cdots 2\cdot 1$. 
We consider the set $Ord~ {\mathcal A'_1}(m_1)$ of equivalence classes of  elements
 of ${\mathcal A'_1}(m_1)$, i.e.,    
$$Ord~ {\mathcal A'_1}(m_1)=\{[A^i_1\to A^i_2\to \cdots  \to A^i_{m_1}]~|~ i=1,2,...,q_1\},$$
where $q_1=m_1!/2$.
By Lemma 3.10,  
there exists a finite open cover ${\mathcal U}_1$ of $X$ such that the mesh of  ${\mathcal U}_1$ is less than $\epsilon_1$ 
and ${\mathcal U}_1$ satisfies the following conditions; \\ $(1)$~the nerve $N({\mathcal U}_1)$ of~ ${\mathcal U}_1$ is homeomorphic to $G$, \\
$(2)$~for each $A\in  {\mathcal A_1}$ there is $s_1(A)\in {\mathcal U}_1$ such that  $s_1(A)\subset A' \subset A$, the family  $${\mathcal A_1(1)}=\{s_1(A)|~A\in  {\mathcal A_1}\}$$ has an independence set with positive density, and we have a free chain $$[s_1(A^1_1)\to s_1(A^1_2)\to  \cdots \to s_1(A^1_{m_1})]$$ to 
from $s_1(A^1_1)$ to $s_1(A^1_{m_1})$ in ${\mathcal U}_1$ which follows from the pattern $$[A^1_1\to A^1_2\to \cdots  \to A^1_{m_1}].$$
This is the case $i=1$.
If we continue this procedure by induction on $i=1,2,...,q_1$, we obtain a sequence  
  ${\mathcal U}_1, {\mathcal U}_2,\cdots, {\mathcal U}_{q_1}$ of finite open 
covers of $X$ and $s_i(A)\in {\mathcal U}_i~( A \in 
{\mathcal A_1}, i=1,2,...,q_1)$ such that the following conditions hold;
\\
$(3)$~the nerve $N({\mathcal U}_i)$ of~ ${\mathcal U}_i$ is homeomorphic to $G$, \\
$(4)$~${\mathcal U}_{i+1}$ is a refinement of ${\mathcal U}_{i}$, \\
$(5)$~for each $A\in  {\mathcal A_1}$, $s_i(A) \in  {\mathcal U}_i~
(i=1,2,...,q_1)$ satisfies that 
$A \supset A' \supset s_{i}(A)\supset s_{i+1}(A)$ and the family  $${\mathcal A_1(i)}=\{s_i(A)|~A\in  {\mathcal A_1}\}~(i=1,2,...,q_1)$$ is an independence set with positive density, and there is a free chain 
$$[s_{i}(A^i_1)\to s_{i}(A^i_2)\to \cdots 
\to s_{i}(A^i_{m_1})]$$
from $s_{i}(A^i_1)$ to $s_{i}(A^i_{m_1})$ in ${\mathcal U}_{i}$ follows from the pattern 
$$[s_{i-1}(A^i_1)\to s_{i-1}(A^i_2)\to \cdots \to s_{i-1}(A^i_{m_1})]
~(i=1,2,...,q_1),$$
 where $s_0(A^1_j)=A^1_j$, 
 etc.  

By Proposition 3.6, for each $A\in  {\mathcal A_1}(q_1)$,
 we can choose nonempty  open sets $s_{q_1}(A)^+$ and $s_{q_1}(A)^{-}$ in $s_{q_1}(A)$ such that $\overline{s_{q_1}(A)^+}\cap \overline{s_{q_1}(A)^-}=\emptyset$ and  the collection 
$${\mathcal A_2}=\{s_{q_1}(A)^+,s_{q_1}(A)^-|~A\in {\mathcal A_1(q_1)}\}$$
has an independence set with positive density. 

Let 
$|{\mathcal A_2}|=m_2~(=2m_1)$ and $0<\epsilon_2 
\leq  \frac{1}{2}\cdot \epsilon_1$. 
By Lemma 3.8, for ${\mathcal A_2}$ we can choose a collection ${\mathcal A'_2}$ such that the mesh  of ${\mathcal A'_2}$ is less than $\epsilon_2$  and ${\mathcal A'_2}$ satisfies the condition (KL) 
 for  ${\mathcal A_2}$ as above.  Also, we consider the set ${\mathcal A'_2}(m_2)$ of 
 permutations of   all elements 
of ${\mathcal A'_2}$ and the set $Ord~ {\mathcal A'_2}(m_2)$ as above.

By repeated use of Lemma 3.10, we obtain  desired families  $${\mathcal A_2(i)}=\{s_i(A)|~A\in  {\mathcal A_2}\}~(i=1,2,...,q_2)$$
as above, where $q_2=m_2!/2$. By use of ${\mathcal A_2(q_2)}$, we obtain ${\mathcal A_3}$ 
as above. Note that $|{\mathcal A_3}|=m_3~(=2^2\cdot m_1)$.  

If we continue this procedure, we have a sequence 
$\epsilon_i~(i\in \N)$ of positive numbers 
 and  sequences of of  families ${\mathcal A_i}$ and ${\mathcal A'_i}$ of open sets of $X$ satisfying the following conditions;\\
$(6)$~$\epsilon_i>\epsilon_{i+1}~(i\in \N)$ and $\lim_{i\to \infty} \epsilon_i=0$,\\
$(7)$~the closures of any distinct elements of  ${\mathcal A_i}$ are disjoint,  
each $A\in {\mathcal A_i}$ contains the closures of two elements of ${\mathcal A_{i+1}}$ and 
 the mesh of  ${\mathcal A_i}$ is less than $\epsilon_i$, \\
 $(8)$~${\mathcal A_i}$ and ${\mathcal A'_i}$ have   independence sets with positive density,   \\
$(9)$~${\mathcal A'_i}$ satisfies the condition  (KL) for ${\mathcal A_i}$, and \\
$(10)$~for any order $E_1\to E_2\to \cdots \to E_{m_i}$  
of all elements of ${\mathcal A_i}$, there is an open cover 
${\mathcal U}$ of $X$ such that 
the mesh of ${\mathcal U}$ is less than $\epsilon_i$,  $N({\mathcal U})$ is $G$ and there is a free chain in ${\mathcal U}$ which follows from $$[E_1\to E_2\to \cdots \to E_{m_i}].$$

For each $i \in \N$, we put $$T_i=\bigcup {\mathcal A_i}~(i\in \N)~  \mbox{ and}$$ 
$$T=\bigcap_{i\in \N} T_i.$$
 Then $T$ is a Cantor set. By the above constructions, we see that for any $k\in \N$ and any order $$x_1\to x_2 \to \cdots \to x_k$$ of $k$ distinct points 
$x_{i}~(i=1,2,...,k)$ of $T$ and any $\epsilon >0$, there is an open cover ${\mathcal U}$ of $X$ such that the  
mesh of ${\mathcal U}$ is less than $\epsilon$, the nerve  $N({\mathcal U})$ of ${\mathcal U}$ is $G$ and there is a free chain in ${\mathcal U}$ which follows from the pattern $$[x_1\to x_2 \to \cdots \to x_k].$$ By Proposition 3.9, 
any minimal continuum $H$ in $X$ containing $T$ is indecomposable and 
 no infinite points of $T$ belong to the same composant of $H$. Also, by Proposition 3.9, we can choose a subset $Z$ of $T$ such that $Z$ is a Cantor set and $Z$ is vertically embedded to composants of $H$. 
Also, by the constructions, we see that  $T$ satisfies the 
 conditions (2), (3) and (4) of Theorem 3.3. Note that any subset of $T$ satisfies the conditions. Hence the Cantor set $Z$ satisfies the conditions of Theorem 3.3. This completes the proof.
\end{proof}

\begin {cor} Let $G$ be any graph. 
If $f:G \to G$ is a positive entropy map on $G$, 
then there exist an indecomposable subcontinuum $H$ of $X= \varprojlim (G,f)$ and a Cantor set $Z$ in $H$ satisfies the following conditions;\\
$(1)$~$Z$ is vertically embedded to composants of $H$,   
 \\
$(2)$~$Z$ has the freely tracing property by free chains, \\
$(3)$~every tuple of finite points in the Cantor set $Z$ is an $IE$-tuple of the shift map 
$\sigma_f$ 
 and \\
$(4)$~for all $k \in \N$, any  distinct $k$ points $y_1,y_2,...,y_k\in Z$ and any points $z_1,z_2,...,z_k\in Z$, the following condition  holds
$$\liminf_{n\to \infty} \max\{d(\sigma_f^n(y_i),z_i)|~1\leq i\leq k\}=0. $$  In particular, $Z$ is a $\delta$-scrambled set of $\sigma_f$ for some $\delta >0$. 
\end{cor}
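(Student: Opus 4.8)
The plan is to obtain the corollary as the special case $m=2$ of Theorem 3.3, applied to the shift system over $G$, once we have checked that this system falls under the hypotheses of that theorem. First I would record that $X=\varprojlim(G,f)$ is, by the characterization recalled in Section 2, a $G$-like continuum, since it is an inverse limit of an inverse sequence all of whose terms are $G$. Second, $\sigma_f:X\to X$ is a homeomorphism by construction. Third, I would invoke the standard fact that the topological entropy of the shift homeomorphism on an inverse limit with a single bonding map equals the entropy of that map, so that $h(\sigma_f)=h(f)>0$; this follows because the cylinder covers $\pi_n^{-1}(\mathcal{U})$, with $\mathcal{U}$ ranging over finite open covers of $G$, are cofinal among the finite open covers of $X$, and for each such cover the refinement sequence under $\sigma_f$ mirrors the refinement sequence of $\mathcal{U}$ under $f$, so that $h(\sigma_f,\pi_n^{-1}(\mathcal{U}))=h(f,\mathcal{U})$ and the suprema coincide.

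With $h(\sigma_f)>0$ in hand, Proposition 3.4(2) furnishes an IE-pair $(x_1,x_2)$ of $\sigma_f$ with $x_1\neq x_2$. I would then pick disjoint open neighborhoods $A_1$ of $x_1$ and $A_2$ of $x_2$ in $X$ and apply Theorem 3.3 to the homeomorphism $\sigma_f$ on the $G$-like continuum $X$ with $m=2$. This produces Cantor sets $Z_i\subset A_i$ and an indecomposable subcontinuum $H\subset X$ for which $Z:=Z_1\cup Z_2$ is vertically embedded into the composants of $H$ (giving (1)), has the freely tracing property by free chains (giving (2)), has the property that every finite tuple of its points is an IE-tuple of $\sigma_f$ (giving (3)), and satisfies the asymptotic condition of (4), whence $Z$ is a $\delta$-scrambled set of $\sigma_f$ for some $\delta>0$. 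These are precisely the four assertions of the corollary, so the proof is complete.

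The only step that is not a purely formal translation is the equality $h(\sigma_f)=h(f)$; I would either cite a standard reference for the entropy of shift maps on inverse limits or include the short cofinality argument sketched above. Beyond that point I anticipate no obstacle, since everything reduces to quoting Proposition 3.4(2) and Theorem 3.3 for the system $(X,\sigma_f)$.
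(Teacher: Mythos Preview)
Your proposal is correct and follows essentially the same approach as the paper: observe that $X=\varprojlim(G,f)$ is $G$-like, that $\sigma_f$ is a homeomorphism with $h(\sigma_f)=h(f)>0$, and then invoke Theorem 3.3. The paper's proof is terser (it simply cites $h(f)=h(\sigma_f)$ and Theorem 3.3), while you spell out the intermediate step of producing an IE-pair via Proposition 3.4(2) and sketch the cofinality argument for the entropy equality; these are exactly the details implicit in the paper's two-line proof.
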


\begin{proof}
Note that $h(f)=h(\sigma_f)>0$ and $\sigma_f$ is a homeomorphism on the $G$-like continuum $X= \varprojlim (G,f)$. This result follows from Theorem 3.3.  
\end{proof}

For a special case, we have the following.

\begin{cor} Let $X$ be one of the Knaster continuum, 
 solenoids or  
Plykin attractors. If $f$ is any positive topological entropy homeomorphism on $X$, then 
there is a Cantor set $Z$ in $X$ such that the Cantor set $Z$ satisfies the following conditions;\\
$(1)$~$Z$ is vertically embedded to composants of $X$, 
 \\
$(2)$~$Z$ has the freely tracing property by free chains, \\
$(3)$~every tuple of finite points in the Cantor set $Z$ is an $IE$-tuple of $f$, and \\
$(4)$~for all $k \in \N$, any  distinct $k$ points $y_1,y_2,...,y_k\in Z$ and any points $z_1,z_2,...,z_k\in Z$, the following condition  holds
$$\liminf_{n\to \infty} \max\{d(f^n(y_i),z_i)|~1\leq i\leq k\}=0. $$  In particular, $Z$ is a $\delta$-scrambled set of $f$ for some $\delta >0$. 
\end{cor}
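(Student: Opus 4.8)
The plan is to derive this corollary directly from Theorem 3.3, after two elementary observations about the three families of continua: each of them is $G$-like for a graph $G$, and each of them contains no proper nondegenerate indecomposable subcontinuum.

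First I would record the $G$-likeness. The Knaster continuum is arc-like, hence $G$-like for $G=[0,1]$; each solenoid is circle-like, hence $G$-like for $G=S^1$; and each Plykin attractor is $(S_1\vee S_2\vee\cdots\vee S_m)$-like for some $m\geq 3$, hence $G$-like for the graph $G=S_1\vee S_2\vee\cdots\vee S_m$. In every case $G$ is a graph, so $X$ meets the hypothesis of Theorem 3.3. Since $f$ is a positive topological entropy homeomorphism of $X$, Proposition 3.4(2) gives an IE-pair $(x_1,x_2)$ of $f$ with $x_1\neq x_2$. Taking $A_1=A_2=X$ and applying Theorem 3.3 with $m=2$, I obtain Cantor sets $Z_1,Z_2\subset X$, an indecomposable subcontinuum $H$ of $X$, and the Cantor set $Z=Z_1\cup Z_2$ satisfying conditions (1)--(4) of Theorem 3.3, with condition (1) taken relative to $H$.

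Next I would upgrade condition (1) from $H$ to $X$, which amounts to showing $H=X$. For each of the three spaces it is classical that every proper subcontinuum is an arc (for the solenoids and the Knaster continuum this is standard continuum theory; for Plykin attractors it follows from the structure theory of one-dimensional expanding attractors), so $X$ admits no proper nondegenerate indecomposable subcontinuum. Since $H$ is an indecomposable subcontinuum of $X$, we conclude $H=X$. (In particular $X$ itself is indecomposable, as is in any case well known.) Hence ``$Z$ is vertically embedded to composants of $H$'' becomes ``$Z$ is vertically embedded to composants of $X$'', which is condition (1) of the corollary; and conditions (2), (3), (4) of the corollary are precisely conditions (2), (3), (4) of Theorem 3.3 for the Cantor set $Z$, so they hold verbatim --- in particular $Z$ is a $\delta$-scrambled set of $f$ for some $\delta>0$.

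I expect the only step requiring genuine input, beyond invoking Theorem 3.3, to be the verification that $H=X$, i.e. that these continua contain no proper nondegenerate indecomposable subcontinuum; for the Knaster continuum and the solenoids this is textbook material (see [20],[26]), while for the Plykin attractor it is a consequence of Williams' description of one-dimensional hyperbolic attractors. Everything else is bookkeeping: identifying the ambient graph $G$ in each case and quoting Proposition 3.4(2) and Theorem 3.3.
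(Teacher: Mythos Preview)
Your proposal is correct and follows essentially the same approach as the paper: apply Theorem 3.3 to obtain an indecomposable subcontinuum $H$ together with the Cantor set $Z$, then observe that no proper subcontinuum of $X$ is indecomposable so that $H=X$, whence condition (1) holds relative to $X$ itself. The paper's proof is simply a terser version of yours; your additional remarks on $G$-likeness, the invocation of Proposition 3.4(2) to produce an IE-pair, and the reason proper subcontinua are arcs are all details the paper leaves implicit.
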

\begin{proof} By Theorem 3.3, there is an indecomposable subcontinuum $H$ in $X$. Note that any proper subcontinuum of $X$ is not indecomposable and hence $H=X$. This corollary follows from Theorem 3.3.
\end{proof}

An onto map $f:X \to Y$ of continua is {\it monotone} if for any $y\in Y$, $f^{-1}(y)$ is connected.  In [16], we proved that if $G$ is a graph and $f:X\to X$ is a monotone map on a  $G$-like continuum $X$ which has positive topological entropy, then $X$ contains an indecomposable  subcontinuum. Here we give the following more precise result. 

\begin{thm} Suppose that $G$ is a graph and  $X$ is a $G$-like continuum. 
If $f:X \to X$ is a monotone map  on $X$ with positive topological entropy, then there exist an indecomposable subcontinuum $H$ of $X$ and a Cantor set $Z$ in $H$ such that the Cantor set $Z$ satisfies the following conditions;\\
$(1)$~$Z$ is vertically embedded to composants of $H$, 
 \\
$(2)$~every tuple of finite points in the Cantor set $Z$ is an $IE$-tuple of $f$,   \\
$(3)$~for all $k \in \N$, any  distinct $k$ points $y_1,y_2,...,y_k\in Z$ and any points $z_1,z_2,...,z_k\in Z$, the following condition  holds
$$\liminf_{n\to \infty} \max\{d(f^n(y_i),z_i)|~1\leq i\leq k\}=0. $$  In particular, $Z$ is a $\delta$-scrambled set of $f$ for some $\delta >0$. 
\end{thm}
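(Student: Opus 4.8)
The plan is to lift the problem to the inverse limit, invoke Theorem 3.3 there, and transport the conclusion back along the first-coordinate projection. Put $\widehat{X}=\varprojlim(X,f)$ and let $\sigma_f$ be the shift homeomorphism; then $\widehat X$ is a continuum, $\sigma_f$ is a homeomorphism, $h(\sigma_f)=h(f)>0$, and --- since $f$, and hence every iterate $f^n$, is monotone --- the first-coordinate projection $\pi\colon\widehat X\to X$ is a monotone surjection. The one genuinely continuum-theoretic input needed at the outset is that $\widehat X$ is again $G$-like; this is where monotonicity of $f$ is essential, and it is the fact established in [16].

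First I would produce the data inside $\widehat X$. Since $h(\sigma_f)>0$, Proposition 3.4(2) yields an IE-pair $(\hat u,\hat v)$ of $\sigma_f$ with $\hat u\neq\hat v$; choosing $n\in\N$ for which $\hat u,\hat v$ have different $n$-th coordinates and using that $\pi\circ\sigma_f^{\,n-1}$ is the $n$-th coordinate projection, together with the $\sigma_f$-invariance of $IE_2$ (Proposition 3.4(3)), the pair $(\hat x_1,\hat x_2):=(\sigma_f^{\,n-1}\hat u,\sigma_f^{\,n-1}\hat v)$ is an IE-pair of $\sigma_f$ with $\pi(\hat x_1)\neq\pi(\hat x_2)$. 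Pick disjoint open sets $V_1,V_2$ in $X$ with $\pi(\hat x_i)\in V_i$, set $A_i=\pi^{-1}(V_i)$, and apply Theorem 3.3 to $\sigma_f$ on $\widehat X$ with the IE-pair $(\hat x_1,\hat x_2)$ and neighbourhoods $A_1,A_2$. This gives Cantor sets $\widehat Z_i\subseteq A_i$ and an indecomposable subcontinuum $\widehat H$ of $\widehat X$ such that, with $\widehat Z:=\widehat Z_1\cup\widehat Z_2$, the set $\widehat Z$ is vertically embedded in the composants of $\widehat H$, every finite tuple in $\widehat Z$ is an IE-tuple of $\sigma_f$, and condition $(4)$ of Theorem 3.3 holds for $\sigma_f$ on $\widehat Z$.

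Next I would push down. Put $H=\pi(\widehat H)$ and $Z=\pi(\widehat Z)$; since $\pi(\widehat Z_i)\subseteq V_i$ and $V_1\cap V_2=\emptyset$, $H$ is nondegenerate. The key claim is that $\pi$ restricts to a monotone surjection $\widehat H\to H$, i.e. $\pi^{-1}(x)\cap\widehat H$ is connected for every $x\in H$; for $G$ a tree this is automatic, since then $\widehat X$ is tree-like, hence hereditarily unicoherent, and the intersection of the two subcontinua $\pi^{-1}(x)$ and $\widehat H$ is connected, while for general $G$ this is again content carried over from [16]. Granting it: $H$ is the monotone image of the indecomposable continuum $\widehat H$ and is nondegenerate, so $H$ is an indecomposable subcontinuum of $X$; and for distinct $\hat p,\hat q\in\widehat Z$, if $K$ were a proper subcontinuum of $H$ containing $\pi(\hat p)$ and $\pi(\hat q)$, then $(\pi|_{\widehat H})^{-1}(K)$ would be a subcontinuum of $\widehat H$ containing $\hat p,\hat q$, hence equal to $\widehat H$ because $\widehat H$ is irreducible between any two points of $\widehat Z$, forcing $H\subseteq K$ --- a contradiction. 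Taking $K$ a one-point set (legitimate as $H$ is nondegenerate) shows $\pi|_{\widehat Z}$ is injective, so $Z$ is a Cantor set, and the general case shows no two points of $Z$ lie in a common composant of $H$; this is conclusion $(1)$.

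The dynamical conclusions transfer because $\pi$ is a factor map ($\pi\circ\sigma_f=f\circ\pi$). For $(2)$, given open $A'_i\ni\pi(\hat y_i)$ the sets $\pi^{-1}(A'_i)\ni\hat y_i$ have an independence set of positive density, and $\bigcap_{j\in J}\sigma_f^{-j}(\pi^{-1}(A'_{i_j}))=\pi^{-1}\!\big(\bigcap_{j\in J}f^{-j}(A'_{i_j})\big)$, so nonemptiness of the left side forces $\bigcap_{j\in J}f^{-j}(A'_{i_j})\neq\emptyset$; hence every finite tuple in $Z$ is an IE-tuple of $f$. For $(3)$, from $\pi\circ\sigma_f^{\,n}=f^{\,n}\circ\pi$ and uniform continuity of $\pi$ one gets $\liminf_{n}\max_i d(f^n(\pi\hat y_i),\pi\hat z_i)=0$ whenever the corresponding $\sigma_f$-limit over $\widehat Z$ vanishes, so $Z$ is a $\delta$-scrambled set of $f$ for some $\delta>0$. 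The hard part is entirely in the push-down step: the claim that $\pi$ restricts to a monotone map on $\widehat H$ --- which is what lets the standard facts "a monotone preimage of a subcontinuum is a subcontinuum" and "a monotone image of an indecomposable continuum is indecomposable" be applied to $\widehat H$ itself rather than merely to $\widehat X$ --- can fail for monotone maps between general continua, and making it go through (along with the $G$-likeness of $\widehat X$) is exactly what is supplied by [16]; the remainder is routine bookkeeping with factor maps and Proposition 3.4.
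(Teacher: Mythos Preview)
Your overall strategy---lift to the inverse limit, apply Theorem~3.3 there, and push the result down along a coordinate projection---is exactly the paper's. Two points, however, do not go through as written.

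First, a slip in the semiconjugacy: in the paper's convention $\sigma_f(x_1,x_2,\ldots)=(x_2,x_3,\ldots)$ one has $\pi\circ\sigma_f(x)=x_2$ while $f\circ\pi(x)=f(x_1)$, so the relation $\pi\circ\sigma_f=f\circ\pi$ that you invoke is false. The correct identity is $\pi\circ\tilde f=f\circ\pi$ with $\tilde f=\sigma_f^{-1}$, and the paper accordingly applies Theorem~3.3 to $\tilde f$ rather than to $\sigma_f$; without this correction condition~(3) does not transfer from $\widehat Z$ to $Z$.

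Second, and more substantively, your push-down rests on the claim that $\pi|_{\widehat H}\colon\widehat H\to H$ is itself monotone, which you defer to [16]. This is not what [16] (or the present paper) supplies: restrictions of monotone maps to subcontinua need not be monotone, and nothing special about the inverse-limit projection rescues this when $G$ is not a tree. The paper instead proves Lemma~3.14 (a refinement of [16, Lemma~2.3]), which avoids the issue via the bounded $n$-od property $T(\widehat X)<\infty$ of $G$-like continua. The mechanism is that each fibre $\pi^{-1}(z)$, being a subcontinuum of $\widehat X$, can meet at most $T(\widehat X)$ composants of $\widehat H$; from this one deduces that $H=\pi(\widehat H)$ is indecomposable, that $\pi|_{\widehat Z}$ is finite-to-one, and that every composant of $H$ meets $\pi(\widehat Z)$ in at most $T(\widehat X)$ points. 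A Baire-category argument of Kuratowski~[21] then extracts a Cantor set $Z\subset\pi(\widehat Z)$ vertically embedded in the composants of $H$. Your hereditarily-unicoherent shortcut is valid when $G$ is a tree, but for general graphs the route through Lemma~3.14 is what is actually required.
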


A continuum $E$ is an $n$-{\it od} $(2\leq n < \infty)$ if $E$ contains a subcontinuum $A$ 
such that 
the complement of $A$ in $E$ is the union $n$ nonempty mutually separated sets, i.e., $$E-A=\bigcup \{E_i|~i=1,2,...,n\}$$ for some subsets $E_i$ satisfying the condition:  
$$\overline{E_i}\cap E_j=\emptyset ~ (i\neq j).$$ 
 For any continuum $X$, let $$T(X)=\sup \{n~|~\mbox{there is an $n$-od in}~ X\}.$$ 
Note that if $X$ is a $G$-like continuum  for a graph $G$, 
then $T(X)<\infty$. 

To prove Theorem 3.13, we need the following lemma.

\begin{lem}{\rm (cf. [16, Lemma 2.3])}
Let $X$ and $Y$ be  continua with $T(X)<\infty$. Suppose that  $f:X\to Y$ is an (onto) monotone map, $H'$ is an indecomposable subcontinuum of $X$ and $Z'$ is a Cantor set which is vertically embedded to composants of $H'$.  If  $H=f(H')$ is nondegenerate, then $H$ is an indecomposable 
subcontinuum of $Y$ and there is a subset  $Z$  of $f(Z')$ such that $Z$ is a Cantor set and $Z$ is vertically embedded to composants of $H$.
\end{lem}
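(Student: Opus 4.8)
The plan is to derive both conclusions from a single structural consequence of the hypotheses. First, because $f$ is monotone, $f^{-1}(J)$ is a subcontinuum of $X$ for every subcontinuum $J$ of $Y$; and because $T(X)<\infty$, an $n$-od argument carried out inside $f^{-1}(J)\cup H'$ (this is the content of [16, Lemma 2.3]) produces an integer $N$, independent of $J$, such that for every proper subcontinuum $J\subsetneq H:=f(H')$ the closed set $f^{-1}(J)\cap H'$ has at most $N$ components; note also that this set is a \emph{proper} subset of $H'$, since $f(H')=H\not\subseteq J$. I would either quote [16, Lemma 2.3] for this or reproduce the short $n$-od argument.

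Granting this, I would first show $H$ is indecomposable. If it were not, $H$ would contain a proper subcontinuum $K$ with $\operatorname{int}_H K\neq\emptyset$; then $F:=f^{-1}(K)\cap H'$ is a proper closed subset of $H'$ with at most $N$ components, and it has nonempty interior in $H'$ because $f$ maps $H'$ onto $H\supseteq\operatorname{int}_H K$. A finite Baire-category argument then forces one of the finitely many closed components of $F$ to have nonempty interior in $H'$; that component is a proper subcontinuum of $H'$ with nonempty interior, which is impossible in an indecomposable continuum. So $H$ is indecomposable. Next I would bound by $N$ the number of points of $f(Z')$ lying in any one composant $c$ of $H$. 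If $w_1,\dots,w_{N+1}$ were distinct points of $c\cap f(Z')$, I would choose $z_j\in Z'$ with $f(z_j)=w_j$ (so the $z_j$ are distinct), join $w_1$ to each $w_j$ by a proper subcontinuum of $H$ — all of them contained in $c$ — and let $J$ be their union, a proper subcontinuum of $H$ containing $w_1,\dots,w_{N+1}$. Then $f^{-1}(J)\cap H'$ is proper in $H'$ and has at most $N$ components, but contains the $N+1$ distinct points $z_1,\dots,z_{N+1}$ of $Z'$; so two of them lie in one component $C$ of it, and $C$, being a subcontinuum of $H'$ through two distinct points of $Z'$, equals $H'$ by vertical embedding — contradicting properness. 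The same argument with $J$ a one-point subcontinuum shows each fibre of $f$ meets $Z'$ in at most $N$ points, so each composant of $H$ meets $f(Z')$ in at most $N$ points and $f|_{Z'}$ is at most $N$-to-one.

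Finally I would extract $Z$ from $f(Z')$ by exactly the Baire-category argument of the proof of Proposition 3.9 (via [21]). Set $R=\{(x,y)\in Z'\times Z' : x\neq y,\ \text{and either}\ f(x)=f(y)\ \text{or}\ f(x),f(y)\ \text{lie in a common proper subcontinuum of}\ H\}$; using an open base of $H$ and the compactness of the hyperspace of subcontinua of $H$ one sees that $R$ is an $F_\sigma$ subset of $Z'\times Z'$, and by the previous step every vertical section of $R$ is finite, so $R$ is meager (a closed set with finite vertical sections is nowhere dense in the perfect square $Z'\times Z'$). Hence there is a Cantor set $Z^\ast\subseteq Z'$ with $(Z^\ast\times Z^\ast)\cap R\subseteq\Delta$; then $f|_{Z^\ast}$ is injective, $Z:=f(Z^\ast)\subseteq f(Z')$ is a Cantor set, and no proper subcontinuum of $H$ contains two distinct points of $Z$, i.e.\ $Z$ is vertically embedded in the composants of $H$. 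Together with the first step this proves the lemma. I expect the one genuine obstacle to be the structural fact of the first paragraph: upgrading ``$f^{-1}(J)\cap H'$ has many components'' to an $n$-od of $X$ requires verifying that the branches emanating from the central subcontinuum are mutually separated, which is precisely the delicate point handled in [16, Lemma 2.3].
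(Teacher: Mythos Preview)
Your overall strategy parallels the paper's—show $H$ is indecomposable, bound the number of points of $f(Z')$ in any composant of $H$, then apply Kuratowski's independence method—but your argument hangs on the structural claim that $f^{-1}(J)\cap H'$ has at most $N$ \emph{components} for every proper subcontinuum $J\subsetneq H$. This is stronger than what the obvious $n$-od argument inside $f^{-1}(J)\cup H'$ delivers: taking $A=f^{-1}(J)$ and extending by proper subcontinua $L_i$ of $H'$ lying in distinct composants that meet $A$, one bounds the number of \emph{composants} of $H'$ that $A$ can meet by $T(X)$. But several components of $A\cap H'$ can lie in a single composant, and you have not explained how to bound those; the ``mutually separated branches'' difficulty you anticipate is exactly this residual step. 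The paper cites [16, Lemma~2.3] only as ``cf.'' and its proof does not use any such component bound, so deferring to [16] here is risky.

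The paper avoids the issue entirely. For indecomposability it works in $Y$: after observing $T(Y)\le T(X)$, it selects uncountably many $x\in Z'$ with the images $f(c(x))$ pairwise disjoint (this uses that point-preimages $f^{-1}(z)$ are continua meeting boundedly many composants of $H'$). Each $f(c(x))$ is a dense connected subset of $H$, so if $A\subsetneq H$ had nonempty interior one could pick, for arbitrarily many such $x$, a subcontinuum $K_x\subset f(c(x))$ meeting $A$ but not contained in $A$; then $A\cup K_{x_1}\cup\cdots\cup K_{x_n}$ is an $n$-od in $Y$, contradicting $T(Y)<\infty$. For the composant bound the paper argues directly: if a proper subcontinuum $C\subsetneq H$ met $f(Z')$ in $T(X)+1$ points, then the continuum $f^{-1}(C)$ would meet $T(X)+1$ distinct composants of $H'$ without containing $H'$, yielding a $(T(X)+1)$-od in $X$. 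Your composant-bound argument can be rewritten along these lines without the component claim, and your extraction of $Z$ (via the $F_\sigma$ relation and Kuratowski's theorem) is equivalent to the paper's, which works inside $f(Z')$ after first noting that $f|Z'$ is finite-to-one so $f(Z')$ is perfect.
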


\begin{proof}
Note that if  $f: X\to Y$ is monotone, then $f^{-1}(C)$ is connected for any subcontinuum $C$ in $Y$. By use of this fact,  we can see  that $T(Y)\leq T(X)<\infty$. 
For each $x\in Z'$, let $c(x)$ denote the composant of $H'$ containing $x\in Z'$. 
Let $$\mbox{Comp}(Z';H')=\{c(x)|~x\in Z'\}.$$
Since $Z'$ is vertically embedded to composants 
of $H'$, $\mbox{Comp}(Z';H')$ is a family of mutually disjoint  dense connected subsets $c(x)$ of $H'$. 
 For each $x, y\in Z'$, we define $x  \sim_f y$ provided that  
$f(c(x))\cap f(c(y))\neq \emptyset $.  Also, we define $x  \sim y$ provided that there is a finite sequence 
$x=x_1, x_2,...,x_s=y$ of $x_i\in Z'$ such that $x_i  \sim_f x_{i+1}$ 
for each $i=1,2,..,s-1$. Then the relation $\sim$ is an equivalence relation on $Z'$. 
Note that $f(c(x))\cap f(c(y))\neq \emptyset $ if and only if there is a point $z\in Y$ with 
$$f^{-1}(z)\cap c(x) \neq \emptyset \neq f^{-1}(z)\cap c(y).$$ 
Let $[x]$ denote the equivalence class containing $x \in Z'$, i.e., $$[x]=\{y\in Z'|~ x \sim y\}.$$ Since $f^{-1}(z)$ is a subcontinuum of $X$ for each $z\in Y$, we can conclude that $|[x]|\leq T(X)$. In particular, $f|Z':Z'\to f(Z')$ is a finite-one map and hence $f(Z')$ is a perfect set, i.e., $f(Z')$ has no isolated point. 

Since $Z'$ is an uncountable set, we we can choose an uncountable subset $Z"$ of $Z'$ such that the  family $$\{f(c(x))|~x\in Z"\}$$ is a 
 family of mutually disjoint subsets of $H=f(H')$. 

We will prove that $H=f(H')$ is indecomposable. Suppose, on the contrary, that $H$ is decomposable. There is a proper 
 subcontinuum $A$ of $H=f(H')$ with $$Int_{H}(A)\neq \emptyset.$$ 
Since each composant of $H'$ is dense in $H'$ and hence 
$f(c(x))$ is dense in $H$ for any $x\in Z"$, $f(c(x))\cap A\neq \emptyset$. This implies that  $|T(Y)|=\infty$. This is a contradiction. Hence $H=f(H')$ is indecomposable. 

We show that for each composant $c$ of $H$, $|c\cap f(Z')| \leq T(X)$. Suppose, on the contrary, that there is a proper subcontinuum $C$ of $H$ such that $|C\cap f(Z')|\geq T(X)+1$. Then $f^{-1}(C)$ is a continuum which intersects $T(X)+1$ composants of $H'$. This is a contradiction. 

Since $f(Z')$ is perfect, by the proof of Proposition 3.9, we can find a Cantor set $Z$ in 
$f(Z')$ such that $Z$ is vertically embedded to composants of $H$. 
\end{proof}

We will give the proof of Theorem 3.13. 

\begin{proof}
We consider the inverse $\tilde{f}:\varprojlim (X,f) \to 
\varprojlim (X,f)$ of the shift map $\sigma_f$, i.e., 
$$\tilde{f}(x_1,x_2,x_3,\cdots)=(f(x_1),x_1,x_2,\cdots).$$ 
Note that $h(f)=h(\tilde{f})>0$. By Theorem 3.3, we can find an indecomposable subcontinuum $H'$ and a Cantor set $Z'$ in $\varprojlim (X,f)$ as in Theorem 3.3. 
Since $f$ is a monotone map, we see that the projection $p_n: \varprojlim (X,f) \to X_n=X$ to the $n$-th coordinate $X_n$  is also  monotone. 
 If we choose sufficiently large natural number $n$, then $H=p_n(H')$ is nondegenerate. By the above lemma, $H$ is indecomposable and there is a Cantor set $Z\subset p_n(Z')$ such that $Z$ is vertically embedded to composants of $H$. Note that the projection  $p_n$ 
preserves the properties of $IE$-tuples  and (4) of Theorem 3.3.  Then  we see that $H$ and $Z$ are desired spaces. 
\end{proof}

 \section{Chaotic continua of continuum-wise expansive homeomorphisms and IE-tuples} 
\quad\ 
In this section, we study dynamical behaviors of continuum-wise expansive homeomorphisms related to IE-tuples and chaotic continua in topology. Any continuum-wise expansive 
homeomorphism $f$ on a continuum $X$ has positive topological entropy and hence $f$ has IE-tuples (see Theorem 4.1 below). Also,    $X$ contains a chaotic continuum and chaotic continuum has  uncountable mutually disjoint (unstable or) stable dense connected $F_{\sigma}$-sets (see Theorem 4.1).  
In this section, we study some precise results of IE-tuples  related to 
(unstable) stable connected sets of chaotic continua  and composants of indecomposable  continua.

A homeomorphism $f:X \to X$ of a compact metric space $X$ with metric $d$ 
is called {\it expansive} ([5,13]) if there is $c > 0$ such that 
for any $x,y \in X$ and $x \not = y$, then there is an 
integer $n \in \Z$ such that 
\begin{center}
$d(f^{n}(x),f^{n}(y)) > c$.
\end{center}
A homeomorphism $f:X \to X$ of a compact metric space $X$ is 
{\it continuum-wise expansive} (resp. {\it positively continuum-wise expansive})  [15] if there is $c > 0$ such that if $A$ 
is a nondegenerate subcontinuum of $X$, then there is an integer 
$n \in \Z$ (resp. a positive integer $n \in \N$) such that 
\begin{center}
${\rm diam}\,f^{n}(A) > c$, 
\end{center}
where ${\rm diam}\,B =\sup\{d(x,y)|\,x, y \in B\}$ for a set $B$. 
Such a positive number $c$ is called an {\it expansive constant} for $f$. 
Note that each expansive homeomorphism is continuum-wise expansive, but 
the converse assertion is not true. There are many continuum-wise 
expansive homeomorphisms which are not expansive 
(see [15]). These notions have been extensively studied in the area of 
topological dynamics, ergodic theory  and continuum theory 
(see [5,10-15,27]).  

The hyperspace $2^X$ of $X$ is the set of  all nonempty closed subsets of $X$ 
with the Hausdorff metric $d_{H}$. 
Let 
\begin{center}
$C(X)=\{A \in 2^X|\ A \ \mbox{is connected}\}$.
\end{center}
Note that $2^X$ and $C(X)$ are compact metric spaces (e.g., see [20] and [26]). 
For a homeomorphism $f:X \to X$ and for each closed subset $H$ of $X$ and $x \in H$, 
the {\it continuum-wise 
$\sigma$-stable sets} $V^{\sigma}(x;H)$ ($\sigma=s,u$) of $f$ are defined as follows:  

\begin{center}
$V^{s}(x;H)=\{y \in H| \ \mbox{there is} \ A \in C(H)\ \mbox{such that}
\ x,y \in A \ \mbox{and}\ {\rm lim}_{n\to \infty}{\rm diam}\,f^{n}(A)=0\}$, 
\end{center}
\begin{center}
$V^{u}(x;H)=\{y \in H| \ \mbox{there is} \ A \in C(H)\ \mbox{such that}
\ x,y \in A \ \mbox{and}\ {\rm lim}_{n\to \infty}{\rm diam}\,f^{-n}(A)=0\}$. 
\end{center}
Note that $$V^{s}(x;H) \subset W^s(x)=\{y\in X|~\lim_{n\to \infty}d(f^n(y),f^n(x))=0\}, $$
$$V^{u}(x;H) \subset W^u(x)=\{y\in X|~\lim_{n\to \infty}d(f^{-n}(y),f^{-n}(x))=0\}.$$ 
A subcontinuum $H$ of $X$ is called a 
$\sigma${\it-chaotic continuum} (see [13]) of $f$ (where $\sigma=s,u$) if   
\begin{enumerate} 
\item 
for each $x \in H$, $V^{\sigma}(x;H)$ is dense in $H$, and
\item 
there is $\tau > 0$ such that for each $x \in H$ and each neighborhood 
$U$ of $x$ in $X$, there is $y \in U \cap H$ such that 
\begin{center}
${\rm lim\, inf}_{n \to \infty}d(f^{n}(x),f^{n}(y)) \geq \tau$
in case $\sigma =s$, or 
\end{center}
\begin{center}
${\rm lim \,inf}_{n \to \infty}d(f^{-n}(x),f^{-n}(y)) \geq \tau$ 
in case $\sigma=u$.
\end{center}
\end{enumerate}

We know that if $f: X \to X$ is a continuum-wise expansive homeomorphism,  then 
$V^{\sigma}(z;H)$ is a connected $F_{\sigma}$-set containing $z$. 
If $H$ is a $\sigma$-chaotic continuum of $f$, then the 
decomposition $\{V^{\sigma}(z;H)|\,z \in H\}$ of $H$ is an uncountable 
family of mutually disjoint, dense connected $F_{\sigma}$-sets in $H$. Note that $\sigma$-chaotic continua of $f$ have  very similar structures of composants of indecomposable continua. In fact, for the case of 1-dimensional continua, $\sigma$-chaotic 
continua may be indecomposable (see [10]). 
\vspace{1mm}\\

Example 3. Let $f: T^2\to T^2$ be an Anosov diffeomorphism on the 2-dimensional torus $T^2$, say 
$$\left[
\begin{array}{cc}
2&1\\1&1
\end{array}
\right]
$$
Then $f$ is expansive and $T^2$ itself is a  $\sigma$-chaotic continuum of $f$ for $\sigma=u, s$.  Note that $T^2$ contains no indecomposable $\sigma$-chaotic subcontinuum. 
\vspace{1mm}\\

For continuum-wise expansive homeomorphisms, we have  obtained the following results (see [11,13,15]). 

\begin{thm} Let $f:X \to X$ be a continuum-wise expansive homeomorphism on a continuum $X$. Then the followings hold.
\begin{enumerate}
\item 
 {\rm ([15, Theorem 4.1])} $f$ has positive topological entropy and hence there are IE-tuples. 
\item 
   {\rm ([13, Theorem 3.6 and Theorem 4.1])} There is a $\sigma$-chaotic continuum $H$ of $f$. Moreover, if $H$ is a $u$-chaotic continuum (resp. $s$-chaotic continuum), then there exists a Cantor set $Z$ in $H$ satisfying  the conditions; \\
$(i)$~ no two of points of $Z$ belong to the same $V^{u}(x;H)~(x\in X)$~(resp. $V^{s}(x;H)~(x\in X)$), i.e., $Z$ is vertically embedded to 
$V^{\sigma}(x,H)~(x\in H)$, \\
$(ii)$~$Z$ is a $\delta$-scrambled set of $f^{-1}$ for some $\delta >0$~(resp. $f$). 
\item 
   {\rm ([11, Theorem 2.4])} Moreover, if $f:X \to X$ is a positively continuum-wise expansive homeomorphism, then $X$ contains a $u$-chaotic  continuum $H$ such that $H$ is indecomposable and the set of composants  of $H$ coincides to $\{V^u(x;H)|~x \in H\}$. Also, there exists a Cantor set $Z$ in $H$ satisfying  the conditions;\\ 
$(i)$ $Z$ is vertically embedded to composants 
$V^{u}(x,H)~(x\in H)$, \\
$(ii)$~$Z$ is a $\delta$-scrambled set of $f^{-1}$ for some $\delta >0$. 
\item 
$(d)$   {\rm ([11, Corollary 2.7])} Moreover, if $G$ is any graph and $X$ is
a $G$-like continuum, then $X$ contains a $\sigma$-chaotic  continuum $H$ such that $H$ is indecomposable and the set of composants  of $H$ coincides to $\{V^{\sigma}(x;H)|~x \in H\}$. Moreover if $\sigma=u$~(resp. $s$), then  there exists a Cantor set $Z$ in $H$ satisfying  the conditions;\\ 
$(i)$~ $Z$ is vertically embedded to 
$V^{\sigma}(x,H)~(x\in H)$, \\
$(ii)$~$Z$ is a $\delta$-scrambled set of $f^{-1}$ for some $\delta >0$~(resp. $f$). 
\item 
  {\rm ([11, Theorem 2.6])}  Moreover, if $X$ is a continuum  in the plane $\R^2$,  then 
$X$ contains a $\sigma$-chaotic continuum $H$ of $f$ such that $H$ is indecomposable and the set of composants  of $H$ coincides to $\{V^{\sigma}(x,H)|~x \in H\}$. Moreover if $\sigma=u$~(resp. $s$), then  there exists a Cantor set $Z$ in $H$ satisfying  the conditions;\\ 
$(i)$~ $Z$ is vertically embedded to 
$V^{\sigma}(x,H)~(x\in H)$, \\
$(ii)$~$Z$ is a $\delta$-scrambled set of $f^{-1}$ for some $\delta >0$~(resp. $f$). 
\end{enumerate}
\end{thm}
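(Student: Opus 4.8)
The plan is essentially bookkeeping: each of the five assertions is, up to restatement, already proved in the author's earlier papers, so I would mainly match hypotheses and recall the underlying mechanisms. Concretely, (1) is [15, Theorem 4.1]; (2) combines [13, Theorem 3.6] (existence of a $\sigma$-chaotic continuum) with [13, Theorem 4.1] (the vertically embedded Cantor set and its scrambledness); (3) is [11, Theorem 2.4]; (4) is [11, Corollary 2.7]; and (5) is [11, Theorem 2.6]. So the real content of a proof is to indicate why those hold.

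For (1), I would recall that continuum-wise expansiveness forces exponential complexity of the dynamics on the hyperspace $C(X)$: fixing an expansive constant $c$, every nondegenerate subcontinuum has some forward or backward iterate of diameter $>c$, and a Bing-type argument in $C(X)$ then produces, at every sufficiently small scale, exponentially many $(n,\epsilon)$-separated subcontinua, whence $h(f)>0$. Positive entropy then yields an IE-pair by Proposition 3.4(2), and hence an IE-tuple.

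For (2), the heart of the matter is the existence of the $\sigma$-chaotic continuum $H$. I would study the continuum-wise stable and unstable decompositions $\{V^{s}(x;X)\}$, $\{V^{u}(x;X)\}$ and use continuum-wise expansiveness to locate a subcontinuum $H$ on which one of them is non-degenerate in the strong sense required: $V^{\sigma}(x;H)$ dense in $H$ for every $x\in H$, with a uniform expansion constant $\tau>0$. Given $H$, the Cantor set $Z$ is extracted exactly as in the proof of Proposition 3.9: the relation ``$x,y$ lie in a common $V^{\sigma}$-set of $H$'' is $F_{\sigma}$ on $H\times H$ and one checks it is meager there, so the Kuratowski--Mycielski theorem [21] yields a Cantor set $Z$ independent for it, that is, vertically embedded to $\{V^{\sigma}(x;H)\ |\ x\in H\}$. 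Then, for distinct $y,z\in Z$, $\limsup_{n}d(f^{-n}(y),f^{-n}(z))\ge\tau$ in the $u$-chaotic case (resp.\ $\limsup_{n}d(f^{n}(y),f^{n}(z))\ge\tau$ in the $s$-chaotic case), since $y,z$ lie in different $V^{\sigma}$-sets, while density of the $V^{\sigma}$-sets gives the $\liminf =0$ condition; hence $Z$ is $\delta$-scrambled for $f^{-1}$ (resp.\ $f$).

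For (3), (4), (5) I would upgrade $H$ to an \emph{indecomposable} continuum whose composants coincide with the sets $V^{\sigma}(x;H)$: positive continuum-wise expansiveness makes the $u$-chaotic continuum automatically indecomposable; $G$-likeness lets one combine the previous step with the indecomposability mechanism behind Theorem 3.3 and [6] (positive entropy on a $G$-like continuum forces an indecomposable subcontinuum, which can be arranged to carry the chaotic structure); and the planar case uses the plane-topological obstruction to decomposability of chaotic continua from [11]. I expect step (2) to be the only genuinely delicate point — producing $H$ together with its uniform constant $\tau$ — since everything afterward is the by-now routine Kuratowski--Mycielski extraction plus the appropriate rigidity (indecomposability) input.
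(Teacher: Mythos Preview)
Your identification is correct: in the paper, Theorem 4.1 carries no proof at all---it is a summary of results already established in [11], [13], [15], and the author simply lists the citations next to each item. So your ``bookkeeping'' plan matches the paper exactly; the paper does even less than you do, since it provides no sketch of the underlying mechanisms.

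One caveat about your sketch for item (2): the claim that ``density of the $V^{\sigma}$-sets gives the $\liminf=0$ condition'' is not quite right as stated. Density of each $V^{\sigma}(x;H)$ in $H$ tells you that near any $y$ there are points of $V^{\sigma}(z;H)$, but it does not by itself force $\liminf_n d(f^{\pm n}(y),f^{\pm n}(z))=0$ for $y,z$ lying in \emph{different} $V^{\sigma}$-classes. The construction in [13, Theorem 4.1] is more delicate: the Cantor set $Z$ is built so that its points are simultaneously vertically embedded and proximal under the appropriate iterate, and this requires more than a single Kuratowski--Mycielski application to the ``same $V^{\sigma}$-class'' relation. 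Since the present paper does not reprove this and simply cites [13], this gap is irrelevant for matching the paper, but you should not present that step as automatic if you ever need to reproduce the argument in detail.
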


We consider the case that $\sigma$-chaotic continua are  periodic. By combining Theorem 3.1 and Theorem 4.1, we have the following results. 

\begin{cor} Let  $f:X \to X$ be a continuum-wise expansive homeomorphism on a continuum $X$. Suppose that $X$ contains a periodic  $\sigma$-chaotic
 continuum  $H$  of $f$. Then there exists a Cantor set $Z$ in $H$ such that if $\sigma=u$~(resp. $\sigma=s)$, then the following conditions  hold;\\
$(1)$~$Z$ is vertically embedded to 
$V^{\sigma}(x,H)~(x\in H)$,  \\
$(2)$~every tuple of finite points in the Cantor set $Z$ is an $IE$-tuple of $f^{-1}$~(resp. $f$),  and  
 \\
$(3)$~for all $k \in \N$, any  distinct $k$ points $y_1,y_2,...,y_k\in Z$ and any points $z_1,z_2,...,z_k\in Z$, the following condition  holds
$$\liminf_{n\to \infty} \max\{d(f^{-n}(y_i),z_i)|~1\leq i\leq k\}=0 $$~
$$(resp. \liminf_{n\to \infty} \max\{d(f^{n}(y_i),z_i)|~1\leq i\leq k\}=0).$$ 
\end{cor}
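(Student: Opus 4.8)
The plan is to reduce the periodic situation to the Kerr--Li theorem (Theorem 3.1), applied not on $X$ but on the invariant subcontinuum $H$, and then to read off the vertical embedding directly from the recurrence condition that theorem produces, with no Mycielski-type selection.

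First I would reduce to the case $\sigma=u$. Since $V^{s}(x;H)$ for $f$ equals $V^{u}(x;H)$ for $f^{-1}$, since $H$ is again periodic under $f^{-1}$, and since $f^{-1}$ is again continuum-wise expansive, the $\sigma=s$ assertion for $f$ is precisely the $\sigma=u$ assertion for $f^{-1}$; so assume $\sigma=u$. Let $p\in\N$ be the period of $H$, so $f^{p}(H)=H$, and set $g=f^{p}|_{H}\colon H\to H$. A power of a continuum-wise expansive homeomorphism is continuum-wise expansive, and the restriction of a continuum-wise expansive homeomorphism to an invariant subcontinuum is continuum-wise expansive; the first fact is immediate from the definition together with the uniform continuity of $f^{0},f^{1},\dots,f^{p-1}$, and the second is immediate from the definition (cf. [15]). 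Hence $g$ is a continuum-wise expansive homeomorphism of the continuum $H$, so $h(g^{-1})=h(g)>0$ by Theorem 4.1(1). I would also record the trivial fact that for $A\in C(H)$ we have $\mathrm{diam}\,f^{-pn}(A)\to 0$ as $n\to\infty$ whenever $\mathrm{diam}\,f^{-n}(A)\to 0$.

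By Proposition 3.4(2), $g^{-1}$ has an IE-pair $(a,b)$ with $a\neq b$. Choosing disjoint neighbourhoods $A_{a}\ni a$ and $A_{b}\ni b$ in $H$ and applying Theorem 3.1 to $g^{-1}\colon H\to H$ (with $m=2$) would produce Cantor sets $Z_{1}\subset A_{a}$, $Z_{2}\subset A_{b}$ with $Z:=Z_{1}\cup Z_{2}$ a Cantor set in $H$, such that every finite tuple of points of $Z$ is an IE-tuple of $g^{-1}$ and, for all $k\in\N$, all distinct $y_{1},\dots,y_{k}\in Z$ and all $z_{1},\dots,z_{k}\in Z$,
$$\liminf_{n\to\infty}\max_{1\le i\le k}d\bigl(f^{-pn}(y_{i}),z_{i}\bigr)=0.$$
Because $\{pn:n\in\N\}\subseteq\N$, this forces $\liminf_{n\to\infty}\max_{i}d(f^{-n}(y_{i}),z_{i})=0$, which is condition (3) (and, as usual, makes $Z$ a $\delta$-scrambled set of $f^{-1}$ for some $\delta>0$). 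For condition (1): if distinct $y,z\in Z$ lay in the same member $V^{u}(x;H)$ of the decomposition, there would be $A\in C(H)$ with $y,z\in A$ and $\mathrm{diam}\,f^{-n}(A)\to 0$; taking $k=2$, $y_{1}=y$, $y_{2}=z$, $z_{1}=y$, $z_{2}=z$ in the displayed property gives $n_{j}\to\infty$ with $f^{-pn_{j}}(y)\to y$ and $f^{-pn_{j}}(z)\to z$, while $f^{-pn_{j}}(y),f^{-pn_{j}}(z)\in f^{-pn_{j}}(A)$ and $\mathrm{diam}\,f^{-pn_{j}}(A)\to 0$, so $d(y,z)=0$, a contradiction; hence $Z$ is vertically embedded to $\{V^{u}(x;H):x\in H\}$. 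For condition (2), I would use two elementary functorialities of IE-tuples: an IE-tuple of $g^{-1}=(f^{-1})^{p}|_{H}$ on $H$ is an IE-tuple of $(f^{-1})^{p}$ on $X$, and an IE-tuple of $(f^{-1})^{p}$ is an IE-tuple of $f^{-1}$ (both are routine: for the first, intersect the ambient open neighbourhoods with $H$ and note $\bigcap_{j}(f^{-p}|_{H})^{-j}(Y_{j}\cap H)\subseteq\bigcap_{j}(f^{-p})^{-j}(Y_{j})$; for the second, if $I$ is a positive-density independence set for $(f^{-1})^{p}$ then $pI$ is one for $f^{-1}$, of density $\tfrac1p$ that of $I$). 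Hence every finite tuple of points of $Z$ is an IE-tuple of $f^{-1}$, which together with (1) and (3) would complete the case $\sigma=u$, and hence the corollary.

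I expect the main technical point to be the positivity of $h(g)$: one must know that restricting a power of a continuum-wise expansive homeomorphism to an invariant subcontinuum is still continuum-wise expansive, so that Theorem 4.1(1) may be applied on $H$ rather than on $X$ (entropy can drop on subspaces), and this is exactly where the periodicity of $H$ is used. The only genuinely new step is the argument for condition (1): in contrast with Proposition 3.9, no Mycielski-type extraction of an $R$-independent sub-Cantor set is required here, because the Kerr--Li recurrence property is already strong enough, when played off against the contraction of the stable continua under $f^{-n}$, to force $Z$ to meet every $V^{u}(x;H)$ in at most one point.
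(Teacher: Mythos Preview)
Your proof is correct and follows essentially the same line as the paper's: restrict a suitable power of $f$ to the periodic $\sigma$-chaotic continuum $H$, observe that this restriction is continuum-wise expansive and hence has positive entropy, and apply Theorem~3.1 (Kerr--Li) on $H$. The paper's write-up is terser; in particular it obtains condition~(1) simply by noting that a $\delta$-scrambled set cannot meet any single $V^{\sigma}(x;H)$ twice (since $\limsup_n d(f^{\pm n}(y),f^{\pm n}(z))\ge\delta$ already contradicts $\mathrm{diam}\,f^{\pm n}(A)\to 0$), whereas you invoke the full recurrence clause of Theorem~3.1---both arguments work. Conversely, your explicit verification of the two functorialities (IE-tuples under restriction to an invariant subspace and under passage from a power to the map) fills in a step the paper leaves implicit.
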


\begin{proof}  We may assume $\sigma=s$. Since the chaotic continuum $H$ is  periodic, there is $i\in \N$ such that $f^{i}(H)=H$. Then $f^{i}|H:H\to H$ is continuum-wise expansive and hence its topological entropy is positive. By Theorem 3.1, there is a Cantor set $Z$ in $H$ as in Theorem 3.1. Since $Z$ is a $\delta$-scrambled set of $f$ for some $\delta>0$, 
$Z$  is vertically embedded to 
$V^{s}(x,H)~(x\in H)$. 
\end{proof}

Similarly, we have the following result. 
 
\begin{cor} Suppose that $f:X \to X$ is a positively continuum-wise expansive homeomorphism on a continuum $X$ such that $X$ has a periodic $u$-chaotic  continuum $H$ which is indecomposable and the set of composants  of $H$ coincides to $\{V^u(x;H)|~x \in H\}$. Then there exists a Cantor set $Z$ in $H$ which is vertically embedded to composants of $H$ and satisfies the conditions;\\
$(1)$~ if $x,y$ belong to the same composant of $H$, then $\lim_{n\to \infty} d(f^{-n}(x),f^{-n}(y))=0$,\\
$(2)$~every tuple of finite points in the Cantor set $Z$ is an $IE$-tuple of $f^{-1}$, and  \\
$(3)$~for all $k \in \N$, any  distinct $k$ points $y_1,y_2,...,y_k\in Z$ and any points $z_1,z_2,...,z_k\in Z$, the following condition  holds
$$\liminf_{n\to \infty} \max\{d(f^{-n}(y_i),z_i)|~1\leq i\leq k\}=0. $$ 
\end{cor}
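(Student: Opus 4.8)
The plan is to imitate the $\sigma=u$ analogue of the proof of Corollary 4.3: apply Theorem 3.1 to a power of $f^{-1}$ restricted to $H$, and then translate the output into the language of composants using the hypothesis that the composants of $H$ are exactly the sets $V^{u}(x;H)$. Since $H$ is periodic, fix $i\in\N$ with $f^{i}(H)=H$ and put $g=f^{-i}|_{H}:H\to H$. As $f$ is positively continuum-wise expansive (hence continuum-wise expansive, hence so is $f^{-i}$), the map $g$ is a continuum-wise expansive homeomorphism of the nondegenerate continuum $H$, being the restriction of the continuum-wise expansive homeomorphism $f^{-i}$ of $X$ to its invariant subcontinuum $H$; so $h(g)>0$ by Theorem 4.1(1), and $g$ has an IE-pair $(p,q)$ with $p\neq q$ by Proposition 3.4(2). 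Applying Theorem 3.1 to $g$ with $m=2$, the points $p,q$, and disjoint neighborhoods $A_{1}\ni p$ and $A_{2}\ni q$ in $H$, I would obtain Cantor sets $Z_{1}\subset A_{1}$ and $Z_{2}\subset A_{2}$; set $Z=Z_{1}\cup Z_{2}\subset H$. Then $Z$ is a Cantor set, every finite tuple of points of $Z$ is an IE-tuple of $g$, for every $k$ and all distinct $y_{1},\dots,y_{k}\in Z$ and arbitrary $z_{1},\dots,z_{k}\in Z$ one has $\liminf_{n\to\infty}\max\{d(g^{n}(y_{j}),z_{j})\mid 1\le j\le k\}=0$, and $Z$ is a $\delta$-scrambled set of $g$ for some $\delta>0$.

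The next step is to transfer these conclusions from $g=f^{-i}|_{H}$ to $f^{-1}$ on $X$. Condition $(3)$ is immediate: since $g^{n}=f^{-in}|_{H}$, the sequence $(\max_{j}d(f^{-in}(y_{j}),z_{j}))_{n}$ is a subsequence of $(\max_{j}d(f^{-m}(y_{j}),z_{j}))_{m}$, so its $\liminf$ bounds the full $\liminf$ from above, and the former is $0$. For condition $(2)$ I would use two easy transfers of IE-tuples. First, an IE-tuple of $g$ in $H$ is an IE-tuple of $f^{-i}$ in $X$: given open $A_{1},\dots,A_{k}\subseteq X$ containing the points, the traces $A_{1}\cap H,\dots,A_{k}\cap H$ are open in $H$, and a positive-density independence set for $\{A_{j}\cap H\}_{j}$ with respect to $g$ is one for $\{A_{j}\}_{j}$ with respect to $f^{-i}$, since each nonempty intersection $\bigcap_{j\in J}g^{-j}(Y_{j}\cap H)$ computed inside $H$ lies in $\bigcap_{j\in J}(f^{-i})^{-j}(Y_{j})$. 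Second, an IE-tuple of $f^{-i}$ is an IE-tuple of $f^{-1}$: if $I\subseteq\N$ is a positive-density independence set for a finite collection with respect to $f^{-i}$, then $iI=\{ij\mid j\in I\}$ again has positive density, because $|iI\cap\{1,\dots,m\}|=|I\cap\{1,\dots,\lfloor m/i\rfloor\}|$, and $iI$ is an independence set for that collection with respect to $f^{-1}$. Combining the two, every finite tuple of points of $Z$ is an IE-tuple of $f^{-1}$, which is condition $(2)$.

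Finally I would verify that $Z$ is vertically embedded to composants of $H$ and that condition $(1)$ holds. Since $H$ is a $u$-chaotic continuum of $f$, the family $\{V^{u}(x;H)\mid x\in H\}$ is a partition of $H$ into mutually disjoint dense connected $F_{\sigma}$-sets, and by hypothesis it coincides with the family of composants of $H$. If distinct points $x,y$ of $Z$ belonged to the same composant $V^{u}(x;H)$, there would be $A\in C(H)$ with $x,y\in A$ and ${\rm diam}\,f^{-n}(A)\to 0$, whence $d(g^{n}(x),g^{n}(y))=d(f^{-in}(x),f^{-in}(y))\le{\rm diam}\,f^{-in}(A)\to 0$, contradicting that $Z$ is a $\delta$-scrambled set of $g$; hence distinct points of $Z$ lie in distinct composants of $H$, i.e. $Z$ is vertically embedded to composants of $H$. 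The same continuum $A$ yields condition $(1)$: if $x,y$ lie in the common composant $V^{u}(x;H)$, then $d(f^{-n}(x),f^{-n}(y))\le{\rm diam}\,f^{-n}(A)\to 0$. I expect the main obstacle to be the transfer of IE-tuples from the power map $g$ on $H$ to $f^{-1}$ on $X$ together with checking that the $\delta$-scrambledness delivered by Theorem 3.1 for $g$ survives the passage to powers well enough to separate composants of $H$; the rest runs parallel to the proof of Corollary 4.3 and is bookkeeping.
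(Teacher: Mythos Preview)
Your proof is correct and follows essentially the same approach as the paper, which for Corollary~4.3 simply indicates ``Similarly, we have the following result,'' referring back to the proof of Corollary~4.2: pass to an iterate $f^{-i}$ that fixes the periodic $u$-chaotic continuum $H$, apply Theorem~3.1 to that restriction, and then use the $\delta$-scrambledness to separate the stable/unstable sets (here, composants). You have been more explicit than the paper in spelling out the two transfers---IE-tuples from $g=f^{-i}|_H$ to $f^{-1}$ on $X$, and the $\liminf$ condition along the subsequence $(in)_n$ to the full sequence---both of which the paper leaves implicit in Corollary~4.2 as well; your arguments for these transfers are fine.
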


For special cases, we have the following.

\begin{cor} Suppose that $X$ is one of the  Knaster continuum, Plykin attractors or solenoids. 
If $f:X \to X$ is a continuum-wise expansive homeomorphism on $X$, then $f$ or $f^{-1}$ is  
positively  continuum-wise expansive. In particular, if $f$ is  
positively  continuum-wise expansive, then there exists a Cantor set $Z$ in $X$ such that the Cantor set $Z$  is vertically embedded to composants of $X$ and satisfies the conditions;\\
$(1)$~ if $x,y$ belong to the same composant of $X$, then $\lim_{n\to \infty} d(f^{-n}(x),f^{-n}(y))=0$,\\
$(2)$~every tuple of finite points in the Cantor set $Z$ is an $IE$-tuple of $f^{-1}$,   \\
$(3)$~$Z$ has the freely tracing property by free 
chains, and \\
$(4)$~for all $k \in \N$, any  distinct $k$ points $y_1,y_2,...,y_k\in Z$ and any points $z_1,z_2,...,z_k\in Z$, the following condition  holds
$$\liminf_{n\to \infty} \max\{d(f^{-n}(y_i),z_i)|~1\leq i\leq k\}=0. $$ . 
\end{cor}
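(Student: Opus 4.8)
The plan is to treat the statement's two assertions separately, obtaining the Cantor set in the second assertion by applying the earlier results to $f^{-1}$ rather than to $f$. For the dichotomy ``$f$ or $f^{-1}$ is positively continuum-wise expansive'', I would first use the structural fact that each of the Knaster continuum, the solenoids and the Plykin attractors is $G$-like (for $G$ an arc, a circle, and a one-point union of circles, respectively) and, crucially, that \emph{every proper subcontinuum of $X$ is an arc}; in particular $X$ contains no nondegenerate indecomposable proper subcontinuum. By Theorem 4.1 (the $G$-like case), $X$ contains an indecomposable $\sigma$-chaotic continuum $H$ of $f$ whose set of composants is exactly $\{V^{\sigma}(x;H)\mid x\in H\}$; since such an $H$ cannot be proper, $H=X$. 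Replacing $f$ by $f^{-1}$ if necessary I may assume $\sigma=u$ (so the conclusion about $f$ below becomes the conclusion for $f^{-1}$, and either way the dichotomy holds), so that the composants of $X$ are the sets $V^{u}(x;X)\subset W^{u}(x)$; hence every proper subcontinuum $A$ of $X$ (an arc, so contained in a single composant) satisfies $\mathrm{diam}\,f^{-n}(A)\to 0$ as $n\to\infty$. I would then show $f$ is positively continuum-wise expansive: if it were not, for each $\epsilon>0$ there would be a nondegenerate — hence, for small $\epsilon$, proper — subcontinuum $A_{\epsilon}$ with $\mathrm{diam}\,f^{n}(A_{\epsilon})\le\epsilon$ for all $n\ge 0$; combining this bounded forward behavior with the backward contraction just noted and passing to a convergent subsequence in the hyperspace $C(X)$, one extracts a nondegenerate subcontinuum whose entire two-sided orbit has diameter below the expansive constant, contradicting continuum-wise expansiveness.

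I expect this last step to be the main obstacle, because one must keep the backward orbit of $A_{\epsilon}$ from expanding substantially before it contracts, so that the limit continuum is genuinely nondegenerate and stays small in \emph{both} time directions. This is precisely where the fact that the proper subcontinua of these three families are arcs (so an arc cannot ``grow and then shrink'' under backward iteration while its forward orbit stays small) is essential, and the cleanest route may well be to invoke the corresponding statement from the structure theory of continuum-wise expansive homeomorphisms of such continua in [11,15].

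For the second assertion, assume $f$ is positively continuum-wise expansive. Then $h(f)>0$ by Theorem 4.1, so $h(f^{-1})=h(f)>0$ and $f^{-1}$ is a positive topological entropy homeomorphism of the $G$-like continuum $X$. Applying Corollary 3.12 to $f^{-1}$, and using $(f^{-1})^{n}=f^{-n}$, I obtain a Cantor set $Z\subset X$ that is vertically embedded to composants of $X$ and satisfies condition (2) (every finite tuple in $Z$ is an IE-tuple of $f^{-1}$), condition (3) (the freely tracing property by free chains, a purely topological property of $Z$ and $X$), and condition (4) of the present statement. Condition (1) involves only $X$ and $f$: since $f$ is positively continuum-wise expansive, Theorem 4.1 (the positively continuum-wise expansive case) gives an indecomposable $u$-chaotic continuum of $f$ with set of composants $\{V^{u}(x;H)\mid x\in H\}$, and as before this continuum must be all of $X$; thus the composants of $X$ are the sets $V^{u}(x;X)\subset W^{u}(x)$, so any two points lying in the same composant of $X$ satisfy $\lim_{n\to\infty}d(f^{-n}(x),f^{-n}(y))=0$. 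Hence $Z$ is vertically embedded to composants of $X$ and satisfies (1)--(4), completing the proof; in sum, the Cantor-set part is a formal consequence of Corollary 3.12 (applied to $f^{-1}$) together with Theorem 4.1 and the observation $H=X$, and essentially all of the difficulty lies in the preliminary dichotomy.
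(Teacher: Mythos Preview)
Your proposal is correct and follows essentially the same route as the paper: both arguments use that $X$ is $G$-like with no proper indecomposable subcontinuum to force any $\sigma$-chaotic continuum to equal $X$, then invoke the dichotomy (which the paper simply asserts one ``can easily see'') and conclude by combining Theorem~3.3/Corollary~3.12 (applied to $f^{-1}$) with Theorem~4.1. Your outline in fact supplies more detail on the dichotomy than the paper does, and your honest flagging of the compactness step as the main obstacle---with the fallback of citing the structure theory in [11,15]---is exactly in the spirit of the paper's own terse treatment; the only cosmetic point is that you need ``no proper subcontinuum of $X$ is indecomposable'' rather than the stronger ``every proper subcontinuum is an arc.''
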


\begin{proof} Note that $X$ is a $G$-like continuum for some graph $G$.  
Recall that $X$ is indecomposable and each proper subcontinuum of  $X$ is not indecomposable. Hence a $\sigma$-chaotic continuum of $f$ coincides with $X$.
Then we can easily see that $f$ or $f^{-1}$ is positively continuum-wise expansive. The corollary follows from Theorems 3.3 and 4.1.
\end{proof}

 For the case of the shift map $\sigma_f:\varprojlim (G,f)\to  \varprojlim (G,f)$ of a map $f:G \to G$ on a graph $G$ which has sensitive dependence on initial conditions,  we can find 
a periodic indecomposable $s$-chaotic continuum in $\varprojlim (G,f)$.  Hence  we have the following corollary.

\begin{cor} 
Suppose that $f:G \to G$ is a map on a graph $G$ which has sensitive dependence on initial conditions
 and $\sigma_f:X= \varprojlim (G,f)\to X$ is the shift map of $f$. Then there exists an indecomposable $s$-chaotic continuum $H$ in $X$ such that $\sigma_f^{n}(H)=H$ for some $n\in \N$ and the set of composants of $H$ coincide to $\{V^s(x;H)|x\in H\}$. Hence there is a Cantor set $Z$ in $H$ such that  $Z$ is vertically embedded to composants of $H$ and satisfies the conditions;\\
$(1)$~ if $x,y$ belong to the same composant of $H$, then $\lim_{n\to \infty} d(\sigma_f^n(x),(\sigma_f^n(y))=0$,\\
$(2)$~every tuple of finite points in the Cantor set $Z$ is an $IE$-tuple of $\sigma_f$,   \\
 $(3)$~$Z$ has the freely tracing property by free 
chains, and \\ 
$(4)$~for all $k \in \N$, any  distinct $k$ points $y_1,y_2,...,y_k\in Z$ and any points $z_1,z_2,...,z_k\in Z$, the following condition  holds
$$\liminf_{n\to \infty} \max\{d(\sigma_f^{n}(y_i),z_i)|~1\leq i\leq k\}=0. $$ 
\end{cor}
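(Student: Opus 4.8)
The plan is to assemble the statement from two independent pieces: first, the existence of the periodic, indecomposable, $s$-chaotic continuum $H$ of $\sigma_f$ whose composants are exactly the sets $\{V^s(x;H)\,|\,x\in H\}$ (this already yields condition (1)); and second, an application of Theorem 3.3 \emph{inside} $H$ producing the Cantor set $Z$ with properties (2)--(4). For the first piece I would argue as in the sentence preceding the statement: writing $X=\varprojlim(G,f)$, on which $\sigma_f$ is a homeomorphism and which is $G$-like, the role of sensitivity of the graph map $f$ is to force $\sigma_f$ to be continuum-wise expansive --- a nondegenerate subcontinuum $A$ of $X$ has nondegenerate projections $p_j(A)\subset G$ for all large $j$, and since $\sigma_f^{-k}=\tilde{f}^{k}$ places $f^{k}(p_j(A))$ in the first coordinate, one only needs that $f$-iterates of nondegenerate subcontinua of $G$ attain a fixed positive diameter, a consequence of sensitivity for graph maps. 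Granting this, Theorem 4.1(1),(2) gives $h(\sigma_f)>0$ and a $\sigma$-chaotic continuum, $G$-likeness of $X$ upgrades it (Theorem 4.1(4)) to an indecomposable one with composants $\{V^\sigma(x;\cdot)\}$, and the inverse-limit structure of $X$ lets one take the chaotic continuum invariant under some power $\sigma_f^{n_0}$ and with $\sigma=s$. Then (1) is immediate: if $x,y$ lie in a common composant $V^s(z;H)$, a subcontinuum $C\ni x,y$ with ${\rm diam}\,\sigma_f^m(C)\to 0$ forces $d(\sigma_f^m(x),\sigma_f^m(y))\to 0$.

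For the second piece, set $g=\sigma_f^{n_0}|_H$. Then $g$ is a continuum-wise expansive homeomorphism of the continuum $H$ (a power of a continuum-wise expansive homeomorphism restricted to an invariant subcontinuum), so $h(g)>0$ by Theorem 4.1(1). Also $H$ is $G'$-like for some graph $G'$: a subcontinuum of a graph is a graph, so restricting small $\epsilon$-maps $X\to G$ to $H$ gives $\epsilon$-maps of $H$ onto subgraphs of $G$, and since there are finitely many homeomorphism types of subgraphs of $G$, one type is realized along a sequence $\epsilon\to 0$. Now I would pick an IE-pair $(a,b)$ of $g$ with $a\neq b$ (Proposition 3.4(2)) and neighborhoods $A_1\ni a$, $A_2\ni b$ in $H$, and apply Theorem 3.3 to $g$ on $H$: this yields Cantor sets $Z_i\subset A_i$ whose union $Z\subset H$ has the property that every finite tuple in $Z$ is an IE-tuple of $g$, that $Z$ has the freely tracing property by free chains, and that $\liminf_{m\to\infty}\max_i d(g^m(y_i),z_i)=0$; in particular $Z$ is a $\delta$-scrambled set of $g$ for some $\delta>0$. (The indecomposable subcontinuum of $H$ produced by Theorem 3.3 is discarded; we use $H$ itself.)

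Finally I would check conditions (1)--(4) for $\sigma_f$, essentially as in the proof of Corollary 4.2. Since $Z$ is $\delta$-scrambled for $g$ and the composants of $H$ are the sets $V^s(z;H)$, no two points of $Z$ can share a composant of $H$ (the connecting subcontinuum would make $d(g^m(x),g^m(y))\to 0$), so $Z$ is vertically embedded in the composants of $H$; condition (1) was handled above. For (2), an independence set of positive density for $g=\sigma_f^{n_0}$ scales by $n_0$ to an independence set of positive density for $\sigma_f$, so every IE-tuple of $g$ is an IE-tuple of $\sigma_f$, and an IE-tuple of $\sigma_f|_H$ is an IE-tuple of $\sigma_f$ on $X$ (a point of $H$ witnessing independence in $H$ witnesses it in $X$). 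For (4), $\liminf_{m}\max_i d(\sigma_f^{n_0 m}(y_i),z_i)=0$ gives, along $\ell=n_0 m$, $\liminf_\ell\max_i d(\sigma_f^\ell(y_i),z_i)=0$, whence the scrambled-set assertion. For (3), $Z$ has the freely tracing property by free chains as a subset of the $G'$-like continuum $H$; to obtain it for $X$ one completes a fine free chain in $H$ (nerve $G'$) that follows the prescribed pattern to a fine open cover of $X$ with nerve $G$, inserting small buffer sets around the two end-links so that the chain stays free.

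The hard part will be the first piece: extracting a \emph{periodic}, indecomposable, $s$-chaotic continuum of $\sigma_f$ whose composants are precisely $\{V^s(x;H)\}$ out of mere sensitivity of $f$ --- this rests on the spreading of nondegenerate subcontinua under sensitive graph maps and on the structure theory of continuum-wise expansive homeomorphisms (Theorem 4.1). A secondary technical point is the passage in (3) from the intrinsic $G'$-like structure of $H$ to the ambient $G$-like structure of $X$.
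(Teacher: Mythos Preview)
Your two-piece strategy is exactly the paper's: it cites [11, Corollary~2.8] to obtain the periodic indecomposable $s$-chaotic continuum $H$ of $\sigma_f$ with composants $\{V^s(x;H)\}$, and then applies Theorem~3.3 to $\sigma_f^{n}|_H$ on $H$ to produce the Cantor set $Z$. Your reconstruction of the first piece via continuum-wise expansiveness of $\sigma_f$ and Theorem~4.1 is essentially the content of that citation---the periodicity of $H$ that you flag as the hard part is precisely what [11, Corollary~2.8] supplies and the paper does not re-prove---while your transfer of (2) and (4) from $\sigma_f^{n_0}|_H$ to $\sigma_f$ correctly fills in details the paper leaves implicit; your concern about lifting (3) from $H$ to $X$ appears unnecessary, since the paper makes no such extension and evidently reads the freely tracing property intrinsically in the $G'$-like continuum $H$.
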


\begin{proof} Note that $(\sigma_f)^{-1}={\tilde f}$. 
By [11, Corollary 2.8], there is a $s$-chaotic indecomposable continuum $H$ of $\sigma_f$ and a natural number $n$ such that 
$\sigma_f^n(H)=H$ and the set of composants  of $H$ coincides to $\{V^{s}(x,H)|~x \in H\}$. By use of 
Theorem  3.3, we can find a desired Cantor set $Z$ in $H$. 
\end{proof}

Example 4. Let $f: I=[0,1]\to I$ be the map defined by 
$f(t)=4t(1-t)~(t\in I)$.  Note that $f$ has sensitive dependence on initial conditions and $\varprojlim (X,f)$ is the Knaster continuum. 
Then  $\varprojlim (X,f)$ is the  
$s$-chaotic continuum of  the shift homeomorphism $\sigma_f:\varprojlim (X,f)\to \varprojlim (X,f)$ satisfying the conditions  of Corollary 4.5, where $H=\varprojlim (X,f)$. 
\vspace{1mm}\\

For the general case that any $\sigma$-chaotic continua of a continuum-wise expansive homeomorphism are not periodic, we do not know if the statements of Corollaries 4.2 and 4.3 hold. In fact, the following problems remain open.

\begin{ques} Let $f:X \to X$ be a continuum-wise expansive homeomorphism on a continuum $X$. Is it true that there exist 
a $\sigma$-chaotic continuum $H$ of $f$  and a Cantor set $Z$ in $H$ such that $Z$ is vertically embedded to 
$V^{\sigma}(x,H)~(x\in H)$ and satisfies the following conditions? :  \\
If $\sigma=u$~(resp. $\sigma=s)$, then \\
$(1)$~every tuple of finite points in the Cantor set $Z$ is an $IE$-tuple of $f^{-1}$~(resp. $f$), and   \\
$(2)$~for all $k \in \N$, any  distinct $k$ points $y_1,y_2,...,y_k\in Z$ and any points $z_1,z_2,...,z_k\in Z$, the following condition  holds
$$\liminf_{n\to \infty} \max\{d(f^{-n}(y_i),z_i)|~1\leq i\leq k\}=0 $$
$$\mbox{(resp.}~  \liminf_{n\to \infty} \max\{d(f^{n}(y_i),z_i)|~1\leq i\leq k\}=0 \mbox{)}.$$ 
 \end{ques}

\begin{ques} Let $f:X \to X$ be a positively continuum-wise expansive homeomorphism on a continuum $X$. Is it true that there exist 
an indecomposable continuum $H$ and a Cantor set $Z$ in $H$ satisfying the following conditions? : \\
$(1)$~$Z$ is vertically embedded to composants of $H$. \\
$(2)$~If $x,y$ belong to the same composant of $H$, then $\lim_{n\to \infty} d(f^{-n}(x),(f^{-n}(y))=0$.\\
$(3)$~Every tuple of finite points in the Cantor set $Z$ is an $IE$-tuple of $f^{-1}$.  \\
$(4)$~For all $k \in \N$, any  distinct $k$ points $y_1,y_2,...,y_k\in Z$ and any points $z_1,z_2,...,z_k\in Z$, the following condition  holds
$$\liminf_{n\to \infty} \max\{d(f^{-n}(y_i),z_i)|~1\leq i\leq k\}=0. $$  
\end{ques}

{\bf Acknowledgments}. The author would like to thank Dr. Masatoshi Hiraki for useful discussions on Proposition 3.7.

\end{document}